\documentclass{amsart}

\usepackage{amssymb}
\usepackage{appendix}
\usepackage{multicol}
\usepackage{enumitem}
\usepackage{xcolor}
\usepackage[bottom=2.5cm,top=2.5cm,left=2.5cm,right=2.5cm]{geometry}
\usepackage{hyperref}
\hypersetup{ 
colorlinks=true,       		
linkcolor=blue,
citecolor=blue,
urlcolor=blue,
}

\hbadness=10000 

\newcommand{\df}[1]{\textbf{#1}}
\newcommand{\dha}{d_H}

\numberwithin{equation}{section}

\newtheorem{theorem}{Theorem}[section]
\newtheorem{lemma}[theorem]{Lemma}
\newtheorem{proposition}[theorem]{Proposition}
\newtheorem{corollary}[theorem]{Corollary}

\theoremstyle{definition}
\newtheorem{definition}[theorem]{Definition}

\newtheorem{remark}[theorem]{Remark}

\newcommand{\describeContent}[1]{%
\begingroup%
\begin{NoHyper}
\let\thefootnote\relax%
\footnotetext{#1}%
\end{NoHyper}
\endgroup%
}

\allowdisplaybreaks

\title{Generalized exponential pullback attractor for a nonautonomous wave equation}

\author{Matheus C. Bortolan}
\author{Tom\'as Caraballo}
\author{Carlos Pecorari Neto}

\begin{document}

\describeContent{
[M.C. Bortolan] This study was financed in part by the Coordena\c c\~ao de Aperfei\c coamento de Pessoal de N\'ivel Superior - Brasil (CAPES) - Finance Code 001 and by the Ministerio de Ciencia e Innovaci\'on, Agencia Estatal de Investigaci\'on (AEI) and FEDER under project PID2021-122991NB-C21.
\par \quad Address: Departamento de Matem\'atica, Centro de Ci\^encias F\'isicas e Matem\'aticas, Universidade Federal de Santa Catarina (UFSC), Campus Florian\'opolis, CEP 88040-090, Florian\'opolis - SC, Brasil.

e-mail: \texttt{m.bortolan@ufsc.br}, ORCiD: 0000-0002-4838-8847.

[T. Caraballo] This study was financed in part by Ministerio de Ciencia e Innovaci{\'o}n (MCI), Agencia Estatal de Investigaci{\'o}n (AEI) and Fondo Europeo de Desarrollo Regional (FEDER) under the project PID2021-122991NB-C21.
\par \quad Address: Departamento de Ecuaciones Diferenciales y An\'alisis Num\'erico, Universidad de Sevilla, Campus Reina Mercedes, 41012, Seville, Spain. e-mail: \texttt{caraball@us.es}, ORCiD: 0000-0003-4697-898X.

[C. Pecorari Neto] This study was financed in part by the Coordena\c c\~ao de Aperfei\c coamento de Pessoal de N\'ivel Superior - Brasil (CAPES) - Finance Code 001.
\par \quad Address: Departamento de Matem\'atica, Centro de Ci\^encias F\'isicas e Matem\'aticas, Universidade Fe\-de\-ral de Santa Catarina (UFSC), Campus Florian\'opolis, CEP 88040-090, Florian\'opolis - SC, Brasil. 

e-mail: \texttt{carlospecorarineto@gmail.com}, ORCiD: 0009-0007-4218-7781.
}

\maketitle

\begin{abstract}
In this work we introduce the concept of \textit{generalized exponential $\mathfrak{D}$-pullback attractor} for evolution processes, where $\mathfrak{D}$ is a universe of families in $X$, which is a compact and positively invariant family that pullback attracts all elements of $\mathfrak{D}$ with an \textit{exponential rate}. Such concept was introduced in \cite{Pecorari} for the general case of decaying functions (which include the exponential decay), but for fixed bounded sets rather than to universe of families. We prove a result that ensures the existence of a generalized exponential $\mathfrak{D}_{\mathcal{C}^\ast}$-pullback attractor for an evolution process, where $\mathfrak{D}_{\mathcal{C}^\ast}$ is a specific universe. This required an adaptation of the results of \cite{Pecorari}, which only covered the case of a polynomial rate of attraction, for fixed bounded sets. Later, we prove that a nonautonomous wave equation has a generalized exponential $\mathfrak{D}_{\mathcal{C}^\ast}$-pullback attractor. This, in turn, also implies the existence of the $\mathfrak{D}_{\mathcal{C}^\ast}$-pullback attractor for such problem.

\bigskip \noindent \textbf{Keywords}: Generalized exponential pullback attractors. Exponential pullback $\kappa$-dissipativity. Non\-au\-tonomous wave equation. Evolution processes. Pullback attractors.

\bigskip \noindent \textbf{MSC2020:} 35B41, 35L20, 37L25.
\end{abstract}

\section{Introduction}

The \textit{pullback attractor} for an \textit{evolution process}, in general terms, represents the sets of limit states of the solutions of a nonautonomous equation in the pullback sense, and also contain all the bounded solutions defined for all time (see \cite{Bortolan2020,Caraballo2006,CRLBook}, for instance). In other words, it contains all the \textit{physically relevant solutions}, bearing in mind real world problems from Physics, Chemistry, Biology, Economy, etc. 

The problem is that there might be no information regarding the \textit{rate of attraction} of an existing pullback attractor. Many authors have worked with the notion of a  \textit{pullback exponential attractor} (see for instance \cite{Sonner1,Sonner2}) for an evolution process, that is, a compact and positively invariant family that \textit{exponentially pullback attracts} bounded subsets, which moreover, have a (uniformly) finite fractal dimension. We slightly generalize this notion, adapting the ideas of \cite{Pecorari}, dropping the requirement of finitude of the fractal dimension and adding the property of \textit{exponential pullback attraction of universes}, which we call a \textit{generalized exponential $\mathfrak{D}$-pullback attractor}, where $\mathfrak{D}$ is a given \textit{universe} (which we define later). Our main result, also inspired by \cite{ZhaoZhong2022}, and following the ideas of \cite{Pecorari}, is to prove the existence of such an object, for a suitable universe $\mathfrak{D}_{\mathcal{C}^\ast}$, for the nonautonomous wave equation given by:
\begin{equation}\label{ourproblem}\tag{NWE}
\left\{\begin{aligned}
& u_{tt}(\!t,\!x)\!-\!\Delta u(t,x)\!+\!k(t)u_t(t,x)\!+\!f(t,u(t,x))\!=\! \int_{\Omega}K(x,y)u_t(t,y)dy\!+\!h(x), (t,x)\!\in\! [s,\infty)\!\times\!\Omega,\\
& u(t,x) = 0, (t,x)\in \left[s,\infty \right) \times \partial\Omega,\\
& u(s,x)=u_0(x), \ u_t(s,x)=u_1(x), x\in \Omega,
\end{aligned} \right.
\end{equation}
where $\Omega\subset \mathbb{R}^3$ is a bounded domain with smooth boundary $\partial\Omega$, and we have the following conditions:
\begin{enumerate}[leftmargin=*,label={\rm \bfseries (H$_{\arabic*}$)}]
\item \label{cond1} $h\in L^2(\Omega)$ and we set $h_0:=\|h\|_{L^2(\Omega)}$,
\item \label{cond2} $K\in L^2(\Omega \times \Omega)$ and we set $K_0:=\|K\|_{L^2(\Omega\times \Omega)}$,
\item \label{cond3} $k\colon \mathbb{R}\rightarrow (0,\infty)$ is a continuous function satisfying $0\leqslant K_0<k_0\leqslant k(t)\leqslant k_1$ for all $t\in\mathbb{R}$, where $k_0,k_1$ are constants and $K_0$ is the constant of \ref{cond2};
\item \label{cond4} $f\in C^1(\mathbb{R}^2, \mathbb{R})$ satisfies
\[
\liminf_{|v|\to \infty}\Big(\inf_{t\in\mathbb{R}}\frac{\partial f}{\partial v}(t,v)\Big) > -\lambda_1,
\quad \hbox{ and } \quad \liminf_{|v|\to \infty}\Big(\inf_{t\in\mathbb{R}}\frac{f(t,v)}{v}\Big) > -\lambda_1,
\]
and there exists a continuous function $c_0\colon \mathbb{R} \rightarrow (0,\infty)$ such that for all $t,v\in \mathbb{R}$ we have
\[
|f(t,0)| \leqslant c_0(t), \quad  \left|\frac{\partial f}{\partial v}(t,v)\right| \leqslant c_0(t) (1+|v|^2),\\
\quad \hbox{ and } \quad \left|\frac{\partial f}{\partial t}(t,v)\right| \leqslant c_0(t),
\]
where 
$\lambda_1>0$ is the first eigenvalue of the negative Laplacian operator $-\Delta$ with Dirichlet boundary conditions in $\Omega$, that is, of the operator $A:=-\Delta \colon H^1_0(\Omega)\cap H^2(\Omega)\subset L^2(\Omega)\to L^2(\Omega)$, which is positive and selfadjoint, with compact resolvent.
\item \label{cond5} For $t,v\in \mathbb{R}$ we define the function 
\begin{equation*}\label{def.F}
F(t,v)=\displaystyle\int_{0}^v f(t,\xi)d\xi,
\end{equation*}
for which we assume for all $t\in \mathbb{R}$ we have
\[
\int_{\mathbb{R}}\left|\frac{\partial F}{\partial t}(t,v)\right|dv \leqslant c_0(t).
\]
\end{enumerate}

To be more precise about our goals in this paper, we begin by presenting an overview of the theory of evolution processes and their pullback attractors in metric spaces. In what follows we write $\mathbb{R}$ to denote the set of the real numbers. By setting $\mathcal{P}=\{(t,s)\in \mathbb{R}^2\colon t\geqslant s\}$ and considering $(X,d)$ a complete metric space, we say that a two-parameter family $S=\{S(t,s)\colon (t,s)\in \mathcal{P}\}$ of continuous maps from $X$ into itself is an \df{evolution process}\index{evolution process} if
\begin{enumerate}[label=$\circ$]
\item $S(t,t)x=x$ for all $x\in X$ and $t\in \mathbb{R}$;
\item $S(t,r)S(r,s)=S(t,s)$ for all $(t,r), (r,s)\in \mathcal{P}$, that is, $t,r,s\in \mathbb{R}$ and $t\geqslant r \geqslant s$;
\item the map $\mathcal{P}\times X \ni (t,s,x) \mapsto S(t,s)x\in X$ is continuous.
\end{enumerate}

We let $\mathfrak{F}$\index{class of families}\index{family} be the class of all families $\hat{D}=\left\{D_t\right\}_{t\in\mathbb{R}}$, where $D_t$ is a nonempty subset of $X$ for each $t\in\mathbb{R}$. A family $\hat{A}\in \mathfrak{F}$ is said to be \df{closed/compact}\index{family!closed}\index{family!compact} if $A_t$ is a closed/compact subset of $X$ for each $t\in \mathbb{R}$. Moreover, given $\hat{A},\hat{B}\in\mathfrak{F}$ we say that $\hat{A}\subset \hat{B}$ if $A_t\subset B_t$ for each $t\in\mathbb{R}$. A subclass $\mathfrak{D}\subset \mathfrak{F}$ is called a \df{universe} if it is \textit{closed by inclusion}, that is, given $\hat{D}_1\in\mathfrak{D}$ and $\hat{D}_2\in \mathfrak{F}$ with $\hat{D}_2\subset \hat{D}_1$ we have $\hat{D}_2\in \mathfrak{D}$.

Recall that for an evolution process $S$ in $X$ and a universe $\mathfrak{D}$, we say that a family $\hat{B}\in \mathfrak{F}$ is \df{$\mathfrak{D}$-pullback attracting} if for each $t\in \mathbb{R}$ and $\hat{D}\in\mathfrak{D}$ we have
\[
\lim_{s\to  -\infty}\dha (S(t,s)D_s,B_t)=0,
\]
where
\[
\dha (U,V)=\sup\limits_{u\in U}\inf\limits_{v\in V}d(u,v)
\]
denotes the \textit{Hausdorff semidistance}\index{Hausdorff semidistance} between two nonempty subsets $U$ and $V$ of $X$. 

Consider a given evolution process $S$ in $X$. We say that a family $\hat{B}$ is \df{invariant/positively invariant}\index{family!invariant}\index{family!positively invariant} for $S$ if for all $(t,s)\in \mathcal{P}$ we have $S(t,s)B_s = B_t$ / $S(t,s)B_s\subset B_t$, respectively.  For an evolution process $S$ in a metric space $X$, we say that $\hat{A}\in\mathfrak{F}$ is a \df{$\mathfrak{D}$-pullback attractor} for $S$ if:
\begin{enumerate}[label=(\roman*)]
\item $\hat{A}$ is compact;
\item $\hat{A}$ is invariant for $S$;
\item \label{ppa3} $\hat{A}$ is $\mathfrak{D}$-pullback attracting;
\item \label{ppa4} $\hat{A}$ is the minimal closed family satisfying \ref{ppa3}, that is, if $\hat{C}\in\mathfrak{F}$ is a closed $\mathfrak{D}$-pullback attracting family, then $\hat{A}\subset\hat{C}$.
\end{enumerate}

The minimality condition \ref{ppa4} ensures that when a pullback attractor for $S$ exists, it is unique.  With that, given an evolution process $S$ in a metric space $X$, we can define the main object of this work.

\begin{definition}\label{def:GenPA}
We say that a family $\hat{M}\in \mathfrak{F}$ is a \df{generalized exponential $\mathfrak{D}$-pullback attractor} for $S$ if $\hat{M}$ is compact, positively invariant and it is \df{exponentially $\mathfrak{D}$-pullback attracting}, that is, there exists a constant $\omega>0$ such that for \textit{every} $\hat{D}\in \mathfrak{D}$ and $t\in\mathbb{R}$ there exist $C=C(\hat{D},t)\geqslant 0$ and $\tau_0=\tau_0(\hat{D},t)\geqslant 0$ such that 
\[
\dha (S(t,t-\tau)D_{t-\tau},M_t)\leqslant C e^{-\omega \tau} \quad \hbox{ for all } \tau\geqslant \tau_0.
\]
\end{definition}

We consider\footnote[1]{Note that if $r_1,r_2\in {\mathcal{C}^\ast}$ and $c>0$ is a constant, then $r_1+r_2\in {\mathcal{C}^\ast}$, $cr_1\in {\mathcal{C}^\ast}$, $r_1r_2\in {\mathcal{C}^\ast}$ and $\sqrt{r_1}\in {\mathcal{C}^\ast}$.}
\[
{\mathcal{C}^\ast} = \left\{r\colon \mathbb{R}\to (0,\infty) \ \Big| \  r \hbox{ is continuous and } \lim_{\tau\to \infty}\sup_{s\leqslant t}r(s-\tau)e^{-\alpha \tau}=0 \hbox{ for each } \alpha>0 \hbox{ and } t\in \mathbb{R}\right\},
\]
and, for $(X,\|\cdot\|)$ a Banach space, we define the universe
\begin{equation}\label{eq.def.universe}
\mathfrak{D}_{\mathcal{C}^\ast} = \left\{\hat{D}\colon \hbox{ there exists }r \in {\mathcal{C}^\ast} \hbox{ with } D_t \subset \overline{B}^X_{r(t)} \hbox{ for all } t\in \mathbb{R}\right\},
\end{equation}
where $\overline{B}^X_r=\{x\in X\colon \|x\|\leqslant r\}$. We assume also that the function $c_0$ that appears in both \ref{cond4} and \ref{cond5} is such that
\begin{enumerate}[leftmargin=*,label={\rm \bfseries (H$_6$)}]
    \item \label{cond6} $c_0\in {\mathcal{C}^\ast}$.
\end{enumerate}

The main result of this paper is the following:

\begin{theorem}\label{App:Att}
Assume that \ref{cond1}-\ref{cond6} hold true. Then the evolution process $S$ associated with \eqref{ourproblem} in $X:=H^1_0(\Omega)\times L^2(\Omega)$ possesses a generalized exponential $\mathfrak{D}_{\mathcal{C}^\ast}$-pullback attractor $\hat{M}\in \mathfrak{D}_{\mathcal{C}^\ast}$ in $X$. Furthermore, $S$ has a $\mathfrak{D}_{\mathcal{C}^\ast}$-pullback attractor $\hat{A}$, with $\hat{A}\subset \hat{M}$.
\end{theorem}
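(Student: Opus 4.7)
The plan is to apply to the process $S$ generated by \eqref{ourproblem} the abstract existence theorem for generalized exponential $\mathfrak{D}_{\mathcal{C}^\ast}$-pullback attractors developed earlier in this paper (adapting the ideas of \cite{Pecorari}). Three ingredients are needed: (i) well-posedness and joint continuity of $S$ in $X=H_0^1(\Omega)\times L^2(\Omega)$ under hypotheses \ref{cond1}-\ref{cond4}; (ii) an exponentially pullback absorbing family belonging to $\mathfrak{D}_{\mathcal{C}^\ast}$; and (iii) the exponential pullback $\kappa$-dissipativity of $S$ on that absorbing family. Ingredient (i) is standard for damped semilinear wave equations with critical cubic growth in $\mathbb{R}^3$, via Galerkin approximations or semigroup techniques.

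\textbf{Absorbing family.} For (ii) I would perturb the physical energy $E(t)=\tfrac{1}{2}(\|u_t\|^2+\|\nabla u\|^2)+\int_\Omega F(t,u)\,dx -\langle h,u\rangle$ by a small multiple of $\langle u_t,u\rangle$ to obtain a Lyapunov functional $\mathcal{E}(t)$ equivalent to $\|(u,u_t)\|_X^2$ up to lower-order terms. Using $k(t)-K_0\geqslant k_0-K_0>0$ (from \ref{cond3}) together with the coercivity encoded in the $\liminf$ conditions of \ref{cond4}, standard manipulations should yield
\[
\frac{d}{dt}\mathcal{E}(t)+\beta\,\mathcal{E}(t)\leqslant C_1\bigl(h_0^2 + c_0(t)^2\bigr)
\]
with $\beta>0$ independent of $t$. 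Gronwall's inequality, combined with \ref{cond5} and \ref{cond6}, then gives, for any $\hat D\in \mathfrak{D}_{\mathcal{C}^\ast}$ with $D_s\subset\overline{B}^X_{r(s)}$, $r\in {\mathcal{C}^\ast}$, an estimate of the form $\|S(t,t-\tau)D_{t-\tau}\|_X^2 \leqslant C e^{-\beta\tau}r(t-\tau)^2 + R(t)^2$ with some $R\in {\mathcal{C}^\ast}$. Since $r\in {\mathcal{C}^\ast}$ forces $e^{-\beta\tau}r(t-\tau)^2\to 0$ as $\tau\to\infty$, the family $\hat B = \{\overline{B}^X_{\rho(t)}\}_{t\in\mathbb{R}}$ with $\rho:=R+1\in {\mathcal{C}^\ast}$ is exponentially pullback absorbing and lies in $\mathfrak{D}_{\mathcal{C}^\ast}$.

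\textbf{Splitting and application.} For (iii) I would decompose $u=v+w$, where $v$ solves the linear problem $v_{tt}-\Delta v + k(t)v_t = \int_\Omega K(x,y)v_t(t,y)\,dy$ with the full initial data (so that the effective damping $k(t)-K_0>0$ produces the exponential decay $\|(v,v_t)(t)\|_X\leqslant C e^{-\mu(t-s)}\|(u_0,u_1)\|_X$ for some $\mu>0$), and $w=u-v$ satisfies the inhomogeneous nonlinear equation with zero initial data. Using the control on $\partial_v f$ from \ref{cond4} together with a higher-order energy estimate, one shows that $(w,w_t)(t)$ lies in a compact subset of $X$ whose size is governed by a ${\mathcal{C}^\ast}$ function. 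This yields the exponential pullback $\kappa$-dissipativity required by the abstract theorem, which then produces a compact, positively invariant family $\hat M\in \mathfrak{D}_{\mathcal{C}^\ast}$ satisfying Definition \ref{def:GenPA}. Since $\hat M$ is compact and $\mathfrak{D}_{\mathcal{C}^\ast}$-pullback attracting, the existence of the $\mathfrak{D}_{\mathcal{C}^\ast}$-pullback attractor $\hat A\subset \hat M$ follows from the general theory (as in \cite{CRLBook}) by taking $\hat A_t$ to be the pullback $\omega$-limit of $\hat M$ at time $t$.

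\textbf{Main obstacle.} I expect the hardest step to be the $w$-regularity argument in (iii): the cubic growth allowed by \ref{cond4} is exactly critical for the Sobolev embedding $H_0^1(\Omega)\hookrightarrow L^6(\Omega)$ in dimension three, so the naive low/high frequency Galerkin splitting does not produce compactness in $X$, and one must instead resort to a fractional-power energy estimate (of the Khanmamedov or Chueshov-Lasiecka type). Moreover, the time-dependent coefficient $c_0(t)$ entering those estimates must be tracked carefully so that the resulting bound on $(w,w_t)$ is itself controlled by a ${\mathcal{C}^\ast}$ function; this is precisely what keeps $\hat M$ inside $\mathfrak{D}_{\mathcal{C}^\ast}$ and makes the attraction exponential in the sense of Definition \ref{def:GenPA}.
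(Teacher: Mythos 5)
Your ingredients (i) and (ii) follow the paper closely: the paper also establishes well-posedness via semigroup theory (Lumer--Phillips plus \cite{Pazy}) and builds a uniformly $\mathfrak{D}_{\mathcal{C}^\ast}$-pullback absorbing family $\hat B\in\mathfrak{D}_{\mathcal{C}^\ast}$ from a perturbed energy functional $\mathcal{V}^s_\varepsilon$, exactly the Lyapunov-type functional you describe, with the resulting radius controlled by functions in $\mathcal{C}^\ast$ (Theorem \ref{teo.existence.paf}; the paper additionally passes to a closed \emph{positively invariant} absorbing family $\hat C$, defined in \eqref{eq.def.C}, which the abstract theorems require and which your sketch omits).

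The genuine gap is your ingredient (iii). You propose the classical splitting $u=v+w$ (linear exponentially decaying part with the full data, plus a nonlinear part with zero data that should be compact), and you yourself concede that this fails at the critical cubic growth permitted by \ref{cond4}, deferring the repair to ``a fractional-power energy estimate of Khanmamedov or Chueshov--Lasiecka type.'' That deferral is precisely the missing proof: at critical growth $f(t,\cdot)$ maps $H^1_0(\Omega)$ into $L^2(\Omega)$ boundedly but not compactly, so no elementary bootstrap gives compactness of $(w,w_t)$, and it is not at all clear that a fractional-energy argument survives the specific structure here --- the nonlocal damping/anti-damping terms, and above all the time-dependent coefficient $c_0\in\mathcal{C}^\ast$, which is \emph{not} bounded (it may grow as $t\to+\infty$ and subexponentially as $t\to-\infty$), so every constant in a higher-order estimate becomes a function of $t$ that must again be shown to lie in $\mathcal{C}^\ast$. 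The paper avoids this entirely: instead of asymptotic regularity, it proves a contraction-type inequality for the \emph{difference} of two solutions,
\[
d(S_nV_1,S_nV_2)^2\leqslant 3e^{-\varepsilon_0 T}\,d(V_1,V_2)^2+g_n\bigl(\rho_1(V_1,V_2),\rho_2(V_1,V_2)\bigr)+\psi_n(V_1,V_2),
\]
where $\rho_1,\rho_2$ are precompact pseudometrics and $\psi_n$ (the term involving $\int_0^T e^{\varepsilon_0(\tau-T)}\langle f_s(\tau,v)-f_s(\tau,w),z_t\rangle\,d\tau$) is shown to be contractive by weak-$\ast$ convergence and energy identities (Lemmas \ref{convhs}--\ref{propt2023xexp}); this feeds into the abstract Theorem \ref{corMain2}, whose proof converts the contraction into exponential decay of the Kuratowski measure $\kappa(S(t,t-nT)B_{t-nT})\leqslant 2\mu^{n/2r}$, i.e.\ exponential pullback $\kappa$-dissipativity, without ever producing a compact set directly. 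Your final step (deducing $\hat A\subset\hat M$ from the general theory) is fine and matches Proposition \ref{theo:GenimpliesPA}, but as written your proof of the crucial compactness/dissipativity ingredient is a statement of intent rather than an argument, and it is exactly the step the paper's method is designed to circumvent.
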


\section{Generalized exponential \texorpdfstring{$\mathfrak{D}$}{D}-pullback attractors} \label{Dphitheory}

Now, we extend the theory presented in \cite{Pecorari} for the case of \textit{universes}, that is, we replace the attraction of fixed bounded sets with the attraction of families in a given universe. There is no substantial change if we consider a general \textit{decay function} $\varphi$ or the particular case of an exponentially decaying function, therefore, since our application deals with the exponential case, we present the theory only for the exponential function. 

For what follows, unless clearly stated otherwise, $(X,d)$ denotes a \textit{complete} metric space. Recall that for a nonempty bounded subset $C\subset X$, its \textit{diameter} is defined as $\operatorname{diam}(C) := \sup_{x,y\in C}d(x,y)$. For $x_0\in X$ and $r>0$, the \textit{closed ball of radius $r$ centered in $x_0$} will be denoted by
$\overline{B}_r(x_0):= \{x\in X\colon d(x,x_0)\leqslant r\}$. 
When there is a need to highlight the space $X$ in which the balls are being considered, we will use the notation $\overline{B}_r^X(x_0)$. For $r>0$ and a nonempty set $A$, we denote $\mathcal{O}_r(A)=\{x\in X\colon d(x,a)<r \hbox{ for some } a\in A\}$. For a bounded set $B\subset X$ we recall that its \textit{Kuratowski measure of non-compactness} is defined by
\begin{align*}
\kappa(B)=\inf\{\delta > 0\colon B & \hbox{ admits a finite cover by sets of diameter less than or equal to }\delta\}.
\end{align*}
Additionally, we define the \df{ball measure} \df{of non-com\-pact\-ness}\index{measure!ball} by
\[
\beta(B)=\inf\left\{r > 0\colon B \hbox{ admits a finite cover by open balls of radius } r\right\}.
\]
Note that for each bounded set $B\subset X$ it holds that $\beta(B)\leqslant \kappa(B)\leqslant 2\beta(B)$.

\begin{definition}\label{abcde} \label{def:UnifPA}
We say that an evolution process $S$ in $X$ is \df{exponentially $\mathfrak{D}$-pullback $\kappa$-dissipative} if there exists $\omega>0$ such that for every $\hat{D}\in\mathfrak{D}$ and $t\in\mathbb{R}$ there exists $C=C(t,\hat{D})\geqslant 0$ and $\tau_0=\tau_0(t,\hat{D})\geqslant 0$ such that
\[
\kappa\Big(\bigcup_{\sigma \geqslant \tau}S(t,t-\sigma)D_{t-\sigma}\Big)\leqslant Ce^{-\omega\tau} \quad \hbox{ for all } \tau \geqslant \tau_0.
\]

Also, we say that $\hat{B}\in\mathfrak{F}$ is \textbf{uniformly $\mathfrak{D}$-pullback absorbing} if given $\hat{D}\in\mathfrak{D}$ and $t\in\mathbb{R}$, there exists $T=T(t,\hat{D})>0$ such that $S(s,s-r)D_{s-r}\subset B_s$ for all $s\leqslant t$ and $r\geqslant T$.
\end{definition}

With minor changes to the proofs, following \cite{Pecorari}, we can prove the following results:

\begin{proposition}\label{continuouscase}
Let $S$ be an exponentially $\mathfrak{D}$-pullback $\kappa$-dissipative evolution process on $X$ and assume that there exists a closed family $\hat{B}=\left\{B_t\right\}_{t\in\mathbb{R}}$ of bounded sets, which is uniformly $\mathfrak{D}$-pullback absorbing and positively invariant. Suppose also that for each $s\in \mathbb{R}$ there exist $\gamma>0$ and $L>0$ such that for all $0\leqslant \tau \leqslant \gamma$ 
\[
d(S(s+\tau,s)x,S(s+\tau,s)y)\leqslant Ld(x,y) \quad \hbox{ for all } x,y\in B_s.
\]
Then there exists a generalized exponential $\mathfrak{D}$-pullback attractor $\hat{M}$ for $S$, with $\hat{M}\subset \hat{B}$.
\end{proposition}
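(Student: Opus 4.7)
The strategy is to adapt the construction of \cite{Pecorari}, originally carried out for a fixed bounded absorbing set, to the family $\hat{B}$ and to the pullback attraction of an arbitrary universe $\mathfrak{D}$. The argument combines a discrete-time $\epsilon$-net construction of exponentially decaying radius with a continuous-time interpolation via the short-time Lipschitz estimate.

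I would first choose a time step $T^{*} > 0$ with $Ce^{-\omega T^{*}} < \tfrac{1}{2}$, where $\omega$ and $C$ come from the exponential $\kappa$-dissipativity hypothesis applied to $\hat{B}$. For each $t \in \mathbb{R}$ and each $n \in \mathbb{N}$, the estimate $\kappa\bigl(\bigcup_{\sigma \geqslant nT^{*}} S(t,t-\sigma)B_{t-\sigma}\bigr) \leqslant Ce^{-\omega nT^{*}}$ supplies a finite cover by balls of radius $r_n \leqslant Ce^{-\omega nT^{*}}$ whose centres can be taken in $B_t$ (using positive invariance of $\hat{B}$), giving finite sets $E_n(t) \subset B_t$. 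Choosing them compatibly so that $S(t,t-T^{*})E_n(t-T^{*}) \subset E_{n+1}(t)$, I would set $M^d_t := \overline{\bigcup_{n \geqslant 0} E_n(t)}$. Compactness of $M^d_t$ follows from total boundedness: for any $\epsilon > 0$, picking $N$ with $Ce^{-\omega NT^{*}} < \epsilon$, the tail $\bigcup_{n \geqslant N}E_n(t)$ lies in the $\epsilon$-neighbourhood of a finite set, while $\bigcup_{n<N}E_n(t)$ is itself finite. Positive invariance of the discrete family under $t \mapsto t+T^{*}$ is built into the construction.

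To pass to continuous time I would define
\[
M_t = \overline{\bigcup_{0 \leqslant s \leqslant T^{*}} S(t,t-s)M^d_{t-s}}.
\]
Covering the compact interval $[t-T^{*},t]$ by finitely many local Lipschitz windows of length at most $\gamma$ (provided by the short-time Lipschitz hypothesis), and using positive invariance of $\hat{B}$ to keep the iterated orbit inside the Lipschitz regime, one obtains a uniform Lipschitz constant for $S(t,\cdot)$ on $B_{\cdot}$ over this interval. This transfers compactness from $M^d_{t-s}$ to $M_t$ and promotes discrete-time attraction to continuous-time attraction at the same exponential rate $\omega$. Positive invariance $S(t,r)M_r \subset M_t$ is immediate from the telescoping structure of the definition. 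For an arbitrary $\hat{D} \in \mathfrak{D}$, uniform $\mathfrak{D}$-pullback absorbing yields $T > 0$ such that $S(t-s,t-s-r)D_{t-s-r} \subset B_{t-s}$ for all $s \in [0,T^{*}]$ and $r \geqslant T$; writing $\tau = s+r$ and decomposing $S(t,t-\tau) = S(t,t-s)S(t-s,t-s-r)$, the exponential attraction of $B_{t-s}$ by $M^d_{t-s}$ combined with the uniform Lipschitz bound gives $\dha(S(t,t-\tau)D_{t-\tau},M_t) \leqslant C'e^{-\omega\tau}$ for $\tau$ sufficiently large.

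The main delicate point will be building the finite nets $E_n(t)$ so that they are simultaneously compatible across $t$ (i.e.\ $S(t,t-T^{*})E_n(t-T^{*}) \subset E_{n+1}(t)$) and preserve the exponential decay of $r_n$ without blowing up the cardinality. In \cite{Pecorari} this is engineered for a single bounded set, and the adaptation requires verifying that the universe-level absorbing and Lipschitz constants depend sufficiently mildly on $t$. A secondary technical issue is that $L$ and $\gamma$ in the Lipschitz hypothesis depend on the base time $s$; this is resolved by a standard finite covering of the compact time interval $[t-T^{*},t]$ by local Lipschitz windows and taking the maximum of the resulting constants.
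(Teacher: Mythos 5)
Your proof skeleton (discrete nets of exponentially decaying radius, then continuous-time interpolation) is indeed the architecture this result rests on --- note that the paper itself gives no proof of Proposition \ref{continuouscase}, deferring to \cite{Pecorari} ``with minor changes'' --- but there is a genuine gap at the compactness step, and that step is the actual crux. The Lipschitz hypothesis of the proposition is \emph{spatial only} (no H\"older/Lipschitz regularity in the time variables), and a spatial Lipschitz bound cannot ``transfer compactness'' to $M_t=\overline{\bigcup_{0\leqslant s\leqslant T^{*}}S(t,t-s)M^d_{t-s}}$. A Lipschitz map sends each compact fiber $M^d_{t-s}$ to a compact set, but the union over the \emph{continuum} $s\in[0,T^{*}]$ of varying compact fibers is precompact only if one has some upper semicontinuity of $s\mapsto M^d_{t-s}$, so that $\{(s,x)\colon x\in M^d_{t-s}\}\subset[0,T^{*}]\times X$ is compact and joint continuity of the process can be invoked. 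Your fibers are built from nets $E_n(t-s)$ chosen independently and arbitrarily at each base time, so no such semicontinuity is available; set-theoretically, already a union of singletons $\bigcup_{s\in(0,1]}\{e_{\lceil 1/s\rceil}\}$ in $\ell^2$ is non-precompact even though every fiber is compact. Quantitatively, all your construction yields is $\kappa(M_t)\leqslant Ce^{-\omega T^{*}}$ (every point of the union lies in $\overline{\bigcup_{\sigma\geqslant T^{*}}S(t,t-\sigma)B_{t-\sigma}}$), which is small but not zero --- and compactness of each $M_t$ is part of the definition of a generalized exponential $\mathfrak{D}$-pullback attractor, with no apparent way to repair the family after the fact without destroying the exponential rate.

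The repair is not finer Lipschitz bookkeeping but a different anchoring of the nets. Fix one global grid $\{kT^{*}\}_{k\in\mathbb{Z}}$, choose finite nets $E_n(kT^{*})\subset\bigcup_{\sigma\geqslant nT^{*}}S(kT^{*},kT^{*}-\sigma)B_{kT^{*}-\sigma}$ of radius $2Ce^{-\omega nT^{*}}$ \emph{only at grid times}, and set $M_t:=\overline{\bigcup_{n\geqslant1}\bigcup_{k\colon kT^{*}\leqslant t}S(t,kT^{*})E_n(kT^{*})}$. Positive invariance is then automatic, because the index set $\{k\colon kT^{*}\leqslant t\}$ only grows with $t$ (no compatibility condition between nets is needed at all); compactness holds because, for every $N$, the pairs $(n,k)$ with $n+\lfloor(t-kT^{*})/T^{*}\rfloor<N$ contribute a \emph{finite} set while the remaining pairs contribute a set of $\kappa$ at most $2Ce^{-\omega NT^{*}}$; and the spatial Lipschitz hypothesis is used exactly once, to transport the net property from the last grid time below $t$ to $t$ itself, which gives exponential pullback attraction of $\hat{B}$ and hence, after uniform absorption, of every $\hat{D}\in\mathfrak{D}$ at the same rate. (The alternative in the literature, as in Carvalho--Sonner, is to assume additional H\"older continuity in time, precisely what this proposition avoids.) Two smaller remarks: the point you single out as the main difficulty --- compatibility of the nets without cardinality blow-up --- is not delicate here, since cardinality only matters for finite fractal dimension, which this notion of attractor explicitly drops; and your application of the $\kappa$-dissipativity estimate to $\hat{B}$ tacitly requires that the hypothesis applies to $\hat{B}$ itself (e.g.\ $\hat{B}\in\mathfrak{D}$), a condition absent from the statement but satisfied where the proposition is used (Theorem \ref{corMain2} assumes $\hat{B}\in\mathfrak{D}_{\mathcal{C}^\ast}$).
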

 
\begin{proposition}\label{theo:GenimpliesPA}
If $S$ is an exponentially $\mathfrak{D}$-pullback $\kappa$-dissipative evolution process in $X$ with a generalized exponential $\mathfrak{D}$-pullback attractor $\hat{M}\in \mathfrak{D}$ and there exists $r>0$ such that $\hat{M}_r = \{\mathcal{O}_r(M_t)\}_{t\in \mathbb{R}}\in \mathfrak{D}$, then $S$ has a $\mathfrak{D}$-pullback attractor $\hat{A}$, with $\hat{A}\subset \hat{M}$.
\end{proposition}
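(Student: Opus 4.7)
My plan is to construct $\hat A$ explicitly as the pullback $\omega$-limit of $\hat M$,
\[
A_t \; := \; \bigcap_{s>0}\overline{\bigcup_{\sigma\geqslant s} S(t,t-\sigma)M_{t-\sigma}},\qquad t\in\mathbb{R},
\]
and then verify the four defining properties of a $\mathfrak{D}$-pullback attractor. Since $\hat M$ is positively invariant, $S(t,t-\sigma)M_{t-\sigma}\subset M_t$ for every $\sigma\geqslant 0$, so $A_t\subset M_t$; hence $\hat A\subset \hat M$ and each $A_t$ is closed in the compact set $M_t$, therefore compact.

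For invariance $S(t,s)A_s=A_t$, the inclusion $\subset$ follows by a reindexing $\sigma\mapsto \sigma+(t-s)$ combined with continuity of $S(t,s)$. For the reverse, given $x=\lim_n S(t,t-\sigma_n)y_n\in A_t$ with $y_n\in M_{t-\sigma_n}$ and $\sigma_n\to\infty$, I decompose $S(t,t-\sigma_n)y_n=S(t,s)z_n$ with $z_n:=S(s,t-\sigma_n)y_n$ for $\sigma_n\geqslant t-s$. Positive invariance places $z_n\in M_s$, which is compact, so $z_n\to z^\ast\in M_s$ along a subsequence; the same reindexing identifies $z^\ast\in A_s$, and continuity of $S(t,s)$ gives $S(t,s)z^\ast=x$.

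For $\mathfrak D$-pullback attraction I argue by contradiction: assume $\dha(S(t,t-\tau_n)D_{t-\tau_n},A_t)\not\to 0$ for some $\hat D\in\mathfrak D$ and $\tau_n\to\infty$, and pick $y_n\in D_{t-\tau_n}$ with $d(x_n,A_t)\geqslant \varepsilon$, where $x_n:=S(t,t-\tau_n)y_n$. Exponential $\mathfrak D$-pullback $\kappa$-dissipativity applied to $\hat D$ yields $\kappa\bigl(\bigcup_{\sigma\geqslant \tau_N}S(t,t-\sigma)D_{t-\sigma}\bigr)\to 0$ as $N\to\infty$, so $\{x_n\}_{n\geqslant N}$ is totally bounded for large $N$; by completeness of $X$, a subsequence converges to some $x^\ast$. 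To show $x^\ast\in A_t$, fix $s>0$ and write $x_n=S(t,t-s)z_n$ with $z_n:=S(t-s,t-\tau_n)y_n$ for $\tau_n>s$. Because $\hat M$ pullback attracts $\hat D$, $d(z_n,M_{t-s})\to 0$; choosing best approximants $w_n\in M_{t-s}$ with $d(z_n,w_n)\to 0$ and extracting $w_{n_k}\to w^\ast\in M_{t-s}$ by compactness, we get $z_{n_k}\to w^\ast$, and continuity of $S(t,t-s)$ forces $x^\ast=S(t,t-s)w^\ast\in\bigcup_{\sigma\geqslant s}S(t,t-\sigma)M_{t-\sigma}$. Since $s>0$ is arbitrary, $x^\ast\in A_t$, contradicting $d(x_n,A_t)\geqslant \varepsilon$.

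Minimality is immediate: any closed $\mathfrak D$-pullback attracting family $\hat C$ attracts $\hat M\in\mathfrak D$, so every $x\in A_t$, being a limit of points whose distances to $C_t$ vanish, belongs to $\overline{C_t}=C_t$. The hypothesis $\hat M_r\in\mathfrak D$ enters to legitimize applying the $\mathfrak D$-indexed statements ($\kappa$-dissipativity and $\mathfrak D$-pullback attraction) to the $r$-thickening of $\hat M$, ensuring that trajectories which only enter a neighborhood of $\hat M$ after the first pullback can still be controlled within the universe. The main obstacle I anticipate is precisely the attraction step: one must carefully bridge the decay of the Kuratowski measure along the tail union (a global quantity giving only total boundedness) with pointwise attraction $d(z_n,M_{t-s})\to 0$ and pointwise continuity of $S(t,t-s)$, in order to identify the accumulation point as lying in $A_t$ rather than merely in $M_t$.
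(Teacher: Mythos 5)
Your construction is correct, and it is a genuinely different route from the one the paper intends. The paper proves this proposition "with minor changes" following \cite{Pecorari}, via the standard absorbing-family machinery: the thickened family $\hat{M}_r=\{\mathcal{O}_r(M_t)\}_{t\in\mathbb{R}}$ is uniformly $\mathfrak{D}$-pullback absorbing (trajectories of any $\hat{D}\in\mathfrak{D}$ eventually enter the $r$-neighborhood of $\hat{M}$ by exponential attraction), the hypothesis $\hat{M}_r\in\mathfrak{D}$ legitimizes feeding this family back into the universe-indexed dissipativity and attraction statements, and the attractor is obtained as the pullback omega-limit of $\hat{M}_r$. You instead take pullback omega-limits of $\hat{M}$ itself and exploit its compactness and positive invariance directly; all four properties check out as you sketch them: $A_t\subset M_t$ gives compactness, the two reindexing/subsequence arguments give invariance, the contradiction argument gives $\mathfrak{D}$-pullback attraction, and minimality uses precisely $\hat{M}\in\mathfrak{D}$. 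The notable consequence --- which your closing paragraph misreads --- is that your proof never actually invokes $\hat{M}_r\in\mathfrak{D}$: in your attraction step the universe-indexed hypotheses are applied only to the given $\hat{D}\in\mathfrak{D}$, never to any thickening of $\hat{M}$, so you have in fact proved the proposition without that hypothesis; it is an artifact of the absorbing-family route, not of yours. What each approach buys: the paper's route recycles known existence theorems at the cost of the extra hypothesis; yours is self-contained and formally stronger. Two further remarks. First, even your use of exponential $\kappa$-dissipativity is superfluous: since $\hat{M}$ exponentially attracts $\hat{D}$ at time $t$, one has $d(x_n,M_t)\to 0$ directly, and compactness of $M_t$ already yields the convergent subsequence $x_{n_k}\to x^\ast$, so the total-boundedness detour through the Kuratowski measure can be dropped. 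Second, two routine omissions worth repairing: you should record that $A_t\neq\emptyset$ (the sets $\overline{\bigcup_{\sigma\geqslant s}S(t,t-\sigma)M_{t-\sigma}}$ are nested nonempty closed subsets of the compact $M_t$, so their intersection is nonempty --- needed since the paper's families consist of nonempty sets), and in the contradiction argument you should pass to a monotone subsequence of $\tau_n$ before comparing tails of $\{x_n\}$ with the tail unions, and fix the subsequence realizing $x_{n_k}\to x^\ast$ before the further $s$-dependent extractions, noting the limit is unchanged under sub-subsequences.
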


We note that if $\hat{D}\in \mathfrak{D}_{\mathcal{C}^\ast}$ then $\hat{D}_r = \{\mathcal{O}_r(D_t)\}_{t\in \mathbb{R}}\in \mathfrak{D}_{\mathcal{C}^\ast}$ for each $r>0$.

\subsection{Existence of generalized exponential \texorpdfstring{$\mathfrak{D}_{\mathcal{C}^\ast}$}{DCast}-pullback attractors}

We first present a proposition that will be paramount for the proof of the existence of a generalized exponential $\mathfrak{D}_{\mathcal{C}^\ast}$-pullback attractor for an evolution process $S$.

\begin{proposition}\label{lemmaxnz}
Let  $\hat{B}\in\mathfrak{F}$ be a uniformly $\mathfrak{D}$-pullback absorbing family, where $\mathfrak{D}$ is any given universe. Suppose that there exists $\omega>0$ such that for each $t\in\mathbb{R}$ there exist $C\geqslant 0$, $\tau_0>0$ such that 
\[
\kappa(S(t,t-\tau)B_{t-\tau})\leqslant Ce^{-\omega\tau} \quad \hbox{ for all } \tau\geqslant \tau_0.
\]
Then $S$ is exponentially $\mathfrak{D}$-pullback $\kappa$-dissipative.
\begin{proof}
Let $\hat{D}\in\mathfrak{D}$ and $t\in\mathbb{R}$. Since $\hat{B}$ is uniformly $\mathfrak{D}$-pullback absorbing, there exists $T>0$ such that $S(s,s-r)D_{s-r}\subset B_s$ for all $s\leqslant t$ and $r\geqslant T$. Take $\sigma \geqslant \tau_0+T>0$ and note that if $s\geqslant 2\sigma$, since $t-\sigma\leqslant t$ and $s-\sigma\geqslant T$, we have
\begin{align*}
S(t,t-s)D_{t-s}&=S(t,t-\sigma)S(t-\sigma,t-s)D_{t-s}\\
&=S(t,t-\sigma)S(t-\sigma,(t-\sigma)-(s-\sigma))D_{(t-\sigma)-(s-\sigma)}\subset S(t,t-\sigma)B_{t-\sigma},
\end{align*}
which implies that $\bigcup_{s\geqslant 2\sigma}S(t,t-s)D_{t-s} \subset S(t,t-\sigma)B_{t-\sigma}$. Thus, since $\sigma\geqslant \tau_0$, it follows that
\begin{align*}
\kappa\Big(\bigcup_{s\geqslant 2\sigma}S(t,t-s)D_{t-s}\Big)\leqslant \kappa(S(t,t-\sigma)B_{t-\sigma})\leqslant Ce^{-\omega\sigma},
\end{align*}
for all $\sigma \geqslant T+\tau_0$. This is equivalent to 
\begin{align*}
\kappa\Big(\bigcup_{s\geqslant \tau}S(t,t-s)D_{t-s}\Big)\leqslant Ce^{-\tfrac{\omega}{2} \tau},
\end{align*}
for all $\tau \geqslant 2\tau_0+2T$, and the proof is complete.
\end{proof}
\end{proposition}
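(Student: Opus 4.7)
The plan is to exploit the cocycle identity $S(t,t-s)=S(t,t-\sigma)S(t-\sigma,t-s)$ together with the uniform $\mathfrak{D}$-pullback absorption of $\hat{B}$, so that tails of the orbit of any $\hat{D}\in\mathfrak{D}$ are all trapped inside one single set $S(t,t-\sigma)B_{t-\sigma}$ whose Kuratowski measure we already control exponentially.

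Fix $t\in\mathbb{R}$ and $\hat{D}\in\mathfrak{D}$. First I would produce the absorption time: by the uniform $\mathfrak{D}$-pullback absorbing property of $\hat{B}$, there exists $T=T(t,\hat{D})>0$ such that $S(s,s-r)D_{s-r}\subset B_s$ for every $s\leqslant t$ and every $r\geqslant T$. Next I would introduce an intermediate time $\sigma$ and, for $s$ large compared with $\sigma$, split the evolution through time $t-\sigma$. Concretely, for any $\sigma\geqslant \tau_0+T$ and any $s\geqslant 2\sigma$, I apply the cocycle property with $r:=s-\sigma\geqslant \sigma\geqslant T$, so that
\[
S(t,t-s)D_{t-s}=S(t,t-\sigma)\bigl[S(t-\sigma,(t-\sigma)-(s-\sigma))D_{(t-\sigma)-(s-\sigma)}\bigr]\subset S(t,t-\sigma)B_{t-\sigma}.
\]
Taking the union over $s\geqslant 2\sigma$ keeps the inclusion, since the right-hand side does not depend on $s$:
\[
\bigcup_{s\geqslant 2\sigma}S(t,t-s)D_{t-s}\subset S(t,t-\sigma)B_{t-\sigma}.
\]

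Now I would apply the Kuratowski measure $\kappa$, which is monotone with respect to inclusion, and invoke the hypothesis on $\hat{B}$: since $\sigma\geqslant \tau_0$, we get
\[
\kappa\Bigl(\bigcup_{s\geqslant 2\sigma}S(t,t-s)D_{t-s}\Bigr)\leqslant \kappa(S(t,t-\sigma)B_{t-\sigma})\leqslant Ce^{-\omega \sigma}.
\]
Finally I would rewrite this in the form required by Definition \ref{abcde}: setting $\tau=2\sigma$, the estimate becomes
\[
\kappa\Bigl(\bigcup_{s\geqslant \tau}S(t,t-s)D_{t-s}\Bigr)\leqslant Ce^{-\frac{\omega}{2}\tau}\quad\text{for all }\tau\geqslant 2(T+\tau_0),
\]
which is exactly exponential $\mathfrak{D}$-pullback $\kappa$-dissipativity with the new rate $\omega/2$.

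There is no real obstacle here — the whole proof is a structural manipulation using absorption plus the cocycle law. The only subtle point is bookkeeping the two time parameters so that the inclusion $S(t,t-s)D_{t-s}\subset S(t,t-\sigma)B_{t-\sigma}$ is uniform in $s\geqslant 2\sigma$; the factor of $2$ is what forces the rate to degrade from $\omega$ to $\omega/2$, and I would flag that this halving is intrinsic to the argument (one cannot do better without extra hypotheses, because part of the budget $\sigma$ must be spent on reaching the absorbing set and the rest on exploiting the decay on $\hat{B}$).
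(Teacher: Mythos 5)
Your proof is correct and follows essentially the same route as the paper's: the same absorption-plus-cocycle inclusion $\bigcup_{s\geqslant 2\sigma}S(t,t-s)D_{t-s}\subset S(t,t-\sigma)B_{t-\sigma}$, monotonicity of $\kappa$, and the substitution $\tau=2\sigma$ degrading the rate to $\omega/2$. One small correction to your closing flag: the halving is \emph{not} intrinsic --- choosing the asymmetric split $\sigma=\tau-T$ (so that $s-\sigma\geqslant T$ holds for all $s\geqslant\tau$) yields $\kappa\bigl(\bigcup_{s\geqslant\tau}S(t,t-s)D_{t-s}\bigr)\leqslant Ce^{\omega T}e^{-\omega\tau}$ for $\tau\geqslant\tau_0+T$, preserving the full rate $\omega$, which is admissible since the constant in Definition \ref{abcde} is allowed to depend on $(t,\hat{D})$.
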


A function $\psi\colon X\times X \to \mathbb{R}^+$ is called \textit{contractive} on $B\subset X$ if for each sequence $\left\{x_n\right\}_{n\in\mathbb{N}}\subset B$ we have $\liminf_{m,n\to \infty}\psi(x_n,x_m)=0$. We denote the set of such functions by $\operatorname{\operatorname{contr}}(B)$. It is simple to see that a function $\psi$ is contractive on $B$ if for each sequence $\{x_n\}\subset B$ there exists a subsequence $\{x_{n_k}\}$ such that $\lim_{k,\ell\to \infty}\psi(x_{n_k},x_{n_\ell})=0$. A \textit{pseudometric} in $X$ is a function $\rho\colon X\times X\to [0,\infty)$ that satisfies:
\begin{enumerate}[label={$\circ$}]
    \item $\rho(x,x)=0$ for all $x\in X$;
    \item $\rho(x,y)=\rho(y,x)$ for all $x,y\in X$;
    \item $\rho(x,z) \leqslant \rho(x,y) + \rho(y,z)$ for all $x,y,z\in X$.
\end{enumerate}
We say that a pseudometric $\rho$ is \textit{precompact on $B\subset X$} if given $\delta>0$, there exists a finite set of points $\left\{x_1,...,x_r\right\}\subset B$ such that $B\subset \bigcup_{j=1}^r B_{\delta}^\rho(x_j)$, where $B_{\delta}^{\rho}(x_j)=\left\{y\in X\colon \rho(y, x_j)<\delta\right\}.$
We know that $\rho$ is precompact on $B$ if and only if any sequence $\left\{x_n\right\}\subset B$ has a Cauchy subsequence $\left\{x_{n_j}\right\}$ with respect to $\rho$.

In what follows, we prove the existence of a generalized exponential $\mathfrak{D}$-pullback attractor in the specific case of the universe $\mathfrak{D}_{\mathcal{C}^\ast}$ defined in \eqref{eq.def.universe}. 

\begin{theorem}\label{corMain2}
Let $X$ be a complete metric space and $S$ be a continuous evolution process in $X$ such that there exists a closed, positively invariant and uniformly $\mathfrak{D}_{\mathcal{C}^\ast}$-pullback absorbing family $\hat{B}\in \mathfrak{D}_{\mathcal{C}^\ast}$ for $S$. Suppose that there exists $\gamma>0$ such that for each $s\in\mathbb{R}$ and $0\leqslant \tau \leqslant \gamma$ there exists a constant $L_{\tau,s}>0$ such that
\begin{equation*}
    d(S(s+\tau,s)x,S(s+\tau,s)y)\leqslant L_{\tau,s}d(x,y) \hbox{ for all }x,y\in B_s.
\end{equation*}
Assume also that there exist $\mu\in (0,1)$, $T>0$, $r>0$ satisfying: given $t\in\mathbb{R}$, there exist functions $g_n\colon (\mathbb{R}^+)^m\to \mathbb{R}^+$ and $\psi_n\colon X \times X \to \mathbb{R}^+$ for each $n\in \mathbb{N}$, and pseudometrics $\rho_1,...,\rho_m$ on $X$ such that
\begin{enumerate}[label={(\roman*)}]
\item \label{itemi} $g_n$ is non-decreasing with respect to each variable, $g_n(0,...,0)=0$ and $g_n$ is continuous at $(0,...,0)$ for each $n\in \mathbb{N}$;
 \item for each $n\in\mathbb{N}$, the pseudometrics $\rho_1,..., \rho_m$ are precompact on $B_{t-nT}$;
\item $\psi_n\in \operatorname{contr}(B_{t-nT})$ for each $n\in\mathbb{N}$;
\item \label{itemiii} for each $n\in\mathbb{N}$ and $x,y\in B_{t-nT}$ we have
\begin{align*}
d(S_nx, S_ny)^r \leqslant \mu d(x,y)^r+g_n(\rho_1(x,y),..., \rho_m(x,y))+\psi_n(x,y),
\end{align*}
\end{enumerate}
where $S_n:= S(t-(n-1)T,t-nT)$ for each $n\in\mathbb{N}$.
 
Then $S$ is  exponentially $\mathfrak{D}_{\mathcal{C}^\ast}$-pullback $\kappa$-dissipative and it possesses a generalized exponential $\mathfrak{D}_{\mathcal{C}^\ast}$-pullback attractor $\hat{M}\in \mathfrak{D}_{\mathcal{C}^\ast}$. 
\begin{proof}
For $A\subset B_{t-T}$ and $\varepsilon>0$, there exist sets $E_1,...,E_p$ such that
\begin{align*}
A\subset \bigcup_{j=1}^p E_j \quad \hbox{ and } \quad \operatorname{diam}(E_j)< \kappa(A)+\varepsilon \hbox{ for } j=1,..,p. 
\end{align*}

If $\left\{x_i\right\}\subset A$, there exist $j\in\left\{1, \cdots, p\right\}$ and a subsequence $\left\{x_{i_k}\right\}$ of $\left\{x_i\right\}$ such that $\left\{x_{i_k}\right\}\subset E_j$. Thus,
\begin{equation}\label{subseq1}
    d(x_{i_k},x_{i_l})\leqslant \operatorname{diam}(E_j)<\kappa(A)+\varepsilon \hbox{ for all }k,l\in\mathbb{N}.
\end{equation}

Since $\rho_1, \cdots, \rho_m$ are precompact on $B_{t-T}$ and $\psi_1\in \operatorname{contr}(B_{t-T})$, we have
\begin{equation}  \label{subseq2} \liminf_{k,l\rightarrow\infty}g_1(\rho_1(x_{i_k},x_{i_l}),\cdots, \rho_m(x_{i_k},x_{i_l}))=0 \hbox{ and } \liminf_{k,l\rightarrow \infty}\psi_1(x_{i_k},x_{i_l})=0.
\end{equation}

Joining \eqref{subseq1}, \eqref{subseq2} and hypothesis \ref{itemiii}, we obtain
\begin{align*}
& \liminf_{k,l\rightarrow \infty}d(S_1x_{i_k},S_1x_{i_l})^r=\liminf_{k,l\rightarrow \infty}d(S(t,t-T)x_{i_k},S(t,t-T)x_{i_l})^r\\
&\quad \leqslant \liminf_{k,l\rightarrow \infty} [\mu d(x_{i_k},x_{i_l})^r+g_1(\rho_1(x_{i_k},x_{i_l}),\cdots, \rho_m(x_{i_k},x_{i_l}))+\psi_1(x_{i_k},x_{i_l})]\leqslant \mu(\kappa(A)+\varepsilon)^r,
\end{align*}
and since $\varepsilon>0$ is arbitrary, we conclude that for any sequence $\{x_n\}_{n\in \mathbb{N}}\subset A$ we have
\begin{equation*}
\liminf_{k,\ell\to \infty}d(S_1x_k,S_1x_\ell)^r\leqslant \mu\kappa(A)^r.
\end{equation*}

Now, let $A\subset B_{t-2T}$, $\varepsilon>0$ and $\left\{x_i\right\}\subset A$. As before, there exists a subsequence $\left\{x_{i_k}\right\}$ for which $d(x_{i_k},x_{i_l})<\kappa(A)+\varepsilon$ for all $k,l\in\mathbb{N}$. Since $\rho_1, \cdots, \rho_m$ are precompact on $B_{t-T}$ and $B_{t-2T}$, $\psi_1\in \operatorname{contr}(B_{t-T})$ and $\psi_2\in  \operatorname{contr}(B_{t-2T})$, and $S_2B_{t-2T}\subset B_{t-T}$, we obtain
\begin{equation*}    \liminf_{k,l\rightarrow\infty}g_2(\rho_1(x_{i_k},x_{i_l}),\cdots, \rho_m(x_{i_k},x_{i_l}))=0 \hbox{ , } \liminf_{k,l\rightarrow \infty}\psi_2(x_{i_k},x_{i_l})=0,
\end{equation*}
\begin{equation*}
\liminf_{k,l\rightarrow\infty}g_1(\rho_1(S_2x_{i_k},S_2x_{i_l}),\cdots, \rho_m(S_2x_{i_k},S_2x_{i_l}))=0 \hbox{ , } \liminf_{k,l\rightarrow \infty}\psi_1(S_2x_{i_k},S_2x_{i_l})=0.
\end{equation*}

Since for any $x,y \in B_{t-2T}$,
\begin{align*}
    &d(S(t,t-2T)x,S(t,t-2T)y)^r=d(S_1S_2x,S_1S_2y)^r\\
    &\quad \leqslant \mu d(S_2x,S_2y)^r+g_1(\rho_1(S_2x,S_2y),\ldots, \rho_m(S_2x,S_2y))+\psi_1(S_2x,S_2y)\\
    &\quad \leqslant \mu[\mu d(x,y)^r+g_2(\rho_1(x,y),\ldots, \rho_m(x,y))+\psi_2(x,y)]\\
    &\qquad+g_1(\rho_1(S_2x,S_2y),\ldots, \rho_m(S_2x,S_2y))+\psi_1(S_2x,S_2y),
\end{align*}
we obtain
\begin{align*}
    \liminf_{k,l\rightarrow\infty}d(S(t,t-2T)x_{i_k},S(t,t-2T)x_{i_l})^r\leqslant \mu^2(\kappa(A)+\varepsilon)^r,
\end{align*}
and, again, since $\varepsilon>0$ is arbitrary, we conclude that for any sequence $\{x_n\}_{n\in \mathbb{N}}\subset A$,
\begin{equation*}
  \liminf_{m,p\rightarrow\infty}d(S(t,t-2T)x_m,S(t,t-2T)x_p)^r\leqslant \mu^2\kappa(A)^r.
\end{equation*}

Inductively, for any $n\in\mathbb{N}$, $A\subset B_{t-nT}$ and $\{x_n\}_{n\in \mathbb{N}}\subset A$ we obtain
\begin{equation}\label{xcva}
    \liminf_{m,p\rightarrow\infty}d(S(t,t-nT)x_m,S(t,t-nT)x_p)^r\leqslant \mu^n\kappa(A)^r.
\end{equation}

Since $\hat{B}\in \mathfrak{D}_{\mathcal{C}^\ast}$, there exists $\delta\in \mathcal{C}^\ast$ such that $\kappa(B_{t-s})\leqslant \delta(t-s)$ for all $s\leqslant t$. We claim that for $n\in \mathbb{N}$ and $A\subset B_{t-nT}$ we have
\begin{equation}\label{xcvb}
    \kappa(S(t,t-nT)A)^r\leqslant 2^r\mu^n\kappa(A)^r\leqslant 2^r\mu^n \delta^r(t-nT).
\end{equation}

Assume that the first inequality in \eqref{xcvb} fails. Then we can choose $a>0$ such that 
\[
2^r \mu^n\kappa(A)^r < a< \kappa(S(t,t-nT)A)^r.
\]
Thus implies that
\begin{align*}
\beta(S(t,t-nT)A)\geqslant \frac12 \kappa(S(t,t-nT)A)>\frac{a^{1/r}}{2},
\end{align*}
that is, $S(t,t-nT)A$ has no finite cover of balls of radius less than or equal to $\frac{a^{1/r}}{2}$. Take an arbitrary $x_1\in A$. Then, there exists $x_2\in A$ such that $d(S(t,t-nT)x_1, S(t,t-nT)x_2)>\frac{a^{1/r}}{2}$, for otherwise $S(t,t-nT)A\subset \overline{B}_{\frac{a^{1/r}}{2}}(S(t,t-nT)x_1)$. Following this idea, there exists $x_3\in A$ such that $d(S(t,t-nT)x_3,S(t,t-nT)x_i)>\frac{a^{1/r}}{2}$  for $i=1,2$, for otherwise $S(t,t-nT)A$ would be contained in the union of two balls of radius $\frac{a^{1/r}}{2}$. This process gives us a sequence $\left\{x_i\right\}_{i\in\mathbb{N}}\subset A$ such that $d(S(t,t-nT)x_i,S(t,t-nT)x_j)>\frac{a^{1/r}}{2}$ for all $i\neq j$. Therefore
\begin{align*}
d(S(t,t-nT)x_i,S(t,t-nT)x_j)^r>\frac{a}{2^r}>\mu^n\kappa(A)^r,
\end{align*}
which contradicts \eqref{xcva}. The second inequality of \eqref{xcvb} is trivial.

In particular, we conclude from \eqref{xcvb} that 
\[
\kappa(S(t,t-nT)B_{t-nT})\leqslant 2\delta(t-nT)\mu^{\frac{n}{r}} \quad \hbox{ for } n\in \mathbb{N}.
\]
Since $\delta\in \mathcal{C}^\ast$, there exists $n_0\in \mathbb{N}$ such that $\delta(t-nT)\mu^{\frac{n}{2r}}\leqslant 1$ for $n\geqslant n_0$. Thus, for $n\geqslant n_0$ we obtain
\begin{equation}\label{xcqv}
\kappa(S(t,t-nT)B_{t-nT})\leqslant 2\mu^{\frac{n}{2r}}.
\end{equation}
Finally, for $s\geqslant (n_0+1)T$, if $n\in \mathbb{N}$ is such that $\frac{s}{T}-1<n \leqslant \frac{s}{T}$, we have $n\geqslant n_0$ and $nT\leqslant s$. From the positive invariance of $\hat{B}$ and \eqref{xcqv} we obtain
\begin{align*}
\kappa& (S(t,t-s)B_{t-s})=\kappa\left(S\left(t,t-nT\right)S\left(t-nT, t-s\right)B_{t-s}\right)\\
&\leqslant \kappa\left(S\left(t,t-nT\right)B_{t-nT}\right)\leqslant 2\mu^{\frac{n}{2r}}\leqslant 2\mu^{\frac{s}{2rT}}\mu^{-\frac{1}{2r}}\leqslant C\mu^{\omega s},
\end{align*}
where $C=2\mu^{-\frac{1}{2r}}$ and $\omega=\frac{1}{2rT}$. It follows from Proposition \ref{lemmaxnz} that $S$ is exponentially $\mathfrak{D}_{\mathcal{C}^\ast}$-pullback $\kappa$-dissipative, and therefore, from Proposition \ref{continuouscase} there exists a generalized exponential $\mathfrak{D}_{\mathcal{C}^\ast}$-pullback attractor $\hat{M}$ for $S$ with $\hat{M}\subset \hat{B}$. Since $\hat{B}\in \mathfrak{D}_{\mathcal{C}^\ast}$ and $\mathfrak{D}_{\mathcal{C}^\ast}$ is a universe, we obtain $\hat{M}\in \mathfrak{D}_{\mathcal{C}^\ast}$. 
\end{proof}
\end{theorem}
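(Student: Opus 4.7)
The plan is to show exponential decay of the Kuratowski measure of non-compactness along the backward orbit of $\hat{B}$, apply Proposition \ref{lemmaxnz} to upgrade this to exponential $\mathfrak{D}_{\mathcal{C}^\ast}$-pullback $\kappa$-dissipativity, and then invoke Proposition \ref{continuouscase} to extract the generalized exponential $\mathfrak{D}_{\mathcal{C}^\ast}$-pullback attractor $\hat{M}\subset\hat{B}$. Membership $\hat{M}\in\mathfrak{D}_{\mathcal{C}^\ast}$ is then automatic since $\hat{B}\in\mathfrak{D}_{\mathcal{C}^\ast}$ and a universe is closed under inclusion.

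The core of the argument is the iteration step. Fix $t\in\mathbb{R}$ and write $S_n=S(t-(n-1)T,t-nT)$, so that $S(t,t-nT)=S_1\circ\cdots\circ S_n$. Given $A\subset B_{t-nT}$ and a sequence $\{x_i\}\subset A$, I would first use the definition of $\kappa(A)$ to extract, for any $\varepsilon>0$, a subsequence lying in a single piece of a finite cover of diameter at most $\kappa(A)+\varepsilon$. Then, using that $\rho_1,\ldots,\rho_m$ are precompact on $B_{t-jT}$ and $\psi_j\in\operatorname{contr}(B_{t-jT})$, together with the continuity and monotonicity of $g_j$ at $0$, I would pass to a further subsequence so that $g_j(\rho_1(\cdot),\ldots,\rho_m(\cdot))$ and $\psi_j(\cdot)$ have vanishing $\liminf$ at each stage (applying this successively to the images $S_j\cdots S_n x_{i_k}$, which remain inside the appropriate $B_{t-jT}$ by positive invariance of $\hat{B}$). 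Telescoping hypothesis \ref{itemiii} along the composition yields
\[
\liminf_{k,\ell\to\infty} d(S(t,t\!-\!nT)x_{i_k},S(t,t\!-\!nT)x_{i_\ell})^{r}\leqslant \mu^{n}(\kappa(A)+\varepsilon)^{r}.
\]
Letting $\varepsilon\downarrow0$ and then converting this diameter-of-subsequence bound into a bound on the ball measure of non-compactness (by the standard trick: if $\beta(S(t,t-nT)A)>\rho$, one can extract a sequence of points that stay $>\rho$ apart, contradicting the above), I get $\beta(S(t,t-nT)A)^{r}\leqslant \mu^{n}\kappa(A)^{r}$, hence $\kappa(S(t,t-nT)A)^{r}\leqslant 2^{r}\mu^{n}\kappa(A)^{r}$.

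Next, since $\hat{B}\in\mathfrak{D}_{\mathcal{C}^\ast}$, there is $\delta\in\mathcal{C}^\ast$ with $\kappa(B_{s})\leqslant \delta(s)$ for $s\leqslant t$, so
\[
\kappa(S(t,t-nT)B_{t-nT})\leqslant 2\,\mu^{n/r}\delta(t-nT).
\]
By definition of $\mathcal{C}^\ast$ applied with $\alpha$ chosen so that $e^{-\alpha T}$ is comparable to $\mu^{1/(2r)}$, the product $\delta(t-nT)\mu^{n/(2r)}$ is bounded for $n$ large, yielding $\kappa(S(t,t-nT)B_{t-nT})\leqslant C\mu^{n/(2r)}$ for $n\geqslant n_0$. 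Interpolating over arbitrary real $s$ in place of $nT$ (via positive invariance: $S(t,t-s)B_{t-s}\subset S(t,t-nT)B_{t-nT}$ for $nT\leqslant s<(n+1)T$), I obtain an exponential bound $\kappa(S(t,t-s)B_{t-s})\leqslant C'e^{-\omega s}$ valid for large $s$, with $\omega$ independent of $t$.

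With that estimate in hand, Proposition \ref{lemmaxnz} immediately gives exponential $\mathfrak{D}_{\mathcal{C}^\ast}$-pullback $\kappa$-dissipativity of $S$, and then Proposition \ref{continuouscase}, whose short-time Lipschitz hypothesis is exactly the assumed bound with constant $L_{\tau,s}$, produces the generalized exponential $\mathfrak{D}_{\mathcal{C}^\ast}$-pullback attractor $\hat{M}\subset\hat{B}$. The main obstacle I expect is the iteration step that converts the contractive-plus-compact decomposition into a clean geometric-decay estimate for $\kappa$: one must apply contractivity and pseudometric precompactness at each of the $n$ stages while controlling the fact that the diagonal extraction is taken along a sequence whose iterated images live in successively different absorbing sets $B_{t-jT}$, which is why positive invariance of $\hat{B}$ is crucial.
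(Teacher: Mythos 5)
Your proposal is correct and follows essentially the same route as the paper's own proof: the same inductive telescoping of hypothesis \ref{itemiii} along $S_1\circ\cdots\circ S_n$ (using positive invariance to keep iterated images in the right $B_{t-jT}$), the same separated-sequence argument converting the $\liminf$ bound $\mu^n(\kappa(A)+\varepsilon)^r$ into $\kappa(S(t,t-nT)A)^r\leqslant 2^r\mu^n\kappa(A)^r$ via the ball measure $\beta$, the same use of $\delta\in\mathcal{C}^\ast$ to absorb $\delta(t-nT)$ into half the exponential rate, the same interpolation over real $s$, and the same conclusion via Propositions \ref{lemmaxnz} and \ref{continuouscase} together with closure of the universe under inclusion. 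No gaps.
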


We point out that this result, although specific to $\mathfrak{D}_{\mathcal{C}^\ast}$, holds true for any universe $\mathfrak{D}$ such that for each $t\in \mathbb{R}$ and $n\in \mathbb{N}$, we have $\kappa(B_{t-nT})\leqslant \delta(t-nT)$, where $\delta$ is a function such that $\mathbb{N}\ni n\mapsto \delta(t-nT)\mu^{\alpha n}$ is bounded for all $n$ sufficiently large, for some $0<\alpha<\frac1r$. In particular, it holds for the universe of \textbf{backwards bounded families}
\[
\mathfrak{D}_{bb} = \{\hat{D}\colon \cup_{s\leqslant t}D_s \hbox{ is bounded for each } t\in \mathbb{R}\},
\]
and for the universe of \textbf{uniformly bounded families}
\[
\mathfrak{D}_{ub} = \{\hat{D}\colon \cup_{t\in \mathbb{R}}D_t \hbox{ is bounded}\}.
\]

\section{Application to a nonautonomous wave equation} \label{Application}

Inspired by the works \cite{yan2023long,ZhaoZhao2020,ZhaoZhong2022} and adapting the results of \cite{Pecorari}, we study the nonautonomous wave equation with non-local weak damping and anti-damping \eqref{ourproblem}, and prove the existence of a \textit{generalized exponential $\mathfrak{D}_{\mathcal{C}^\ast}$-pullback attractor}. 

\subsection{Auxiliary estimates}  Now we present a few estimates, regarding the functions $f$ and $F$, that will help us in the results to come. They are analogous to the ones presented in \cite{Pecorari}, and for this reason, we omit their proofs. To simplify the notation, we also omit the $(\Omega)$ from the subscript of the norms. For instance, we write $\|\cdot\|_{L^2}$ instead of $\|\cdot\|_{L^2(\Omega)}$ and henceforth. 

For $v\in L^2(\Omega)$ we have
\begin{equation*}
\left\|\int_{\Omega}K(x,y)v(y)dy\right\|_{L^2} \leqslant K_0\|v\|_{L^2}.
\end{equation*}

For all $t,v,w\in \mathbb{R}$ we have:
\begin{align}
 & |f(t,v)|\leqslant 2c_0(t)(1+|v|^3);\nonumber\\
 & |f(t,v)-f(t,w)|\leqslant 2c_0(t)(1+|v|^2+|w|^2)|v-w|;\nonumber\\
\label{funcFimp.b} & |F(t,v)|\leqslant 4c_0(t)(1+|v|^4);\\
 & |F(t,v)-F(t,w)|\leqslant 8c_0(t)(1+|v|^3+|w|^3)|v-w|;\nonumber\\
 & \left|\frac{\partial F}{\partial t}(t,v)-\frac{\partial F}{\partial t}(t,w)\right| \leqslant 2c_0(t)|v-w|.\nonumber
\end{align}

In what follows, we use $\hookrightarrow$ to denote continuous inclusions. With the estimates above,  H\"older's inequality, and the continuous inclusion $H^1_0(\Omega)\hookrightarrow L^6(\Omega)$, we can show that there exists a constant $c>0$ such that  for $L_0(t)=c c_0(t)$ we have
\begin{equation}\label{wvtlema}
\|f(t,v)-f(t,w)\|_{L^2}\leqslant L_0(t)(1+\|v\|_{H^1_0}^2+\|w\|_{H^1_0}^2) \|v-w\|_{H^1_0} \quad \hbox{ for all } v,w\in H^1_0(\Omega).
\end{equation}

Also, from the definition of $\liminf$, it follows that given $0<\mu_0<\lambda_1$ there exists $M=M(\mu_0)>0$ such that 
\begin{equation}\label{propax}
\inf_{t\in \mathbb{R}}\frac{\partial f}{\partial v}(t,v)>-\mu_0 \quad \hbox{ and } \quad \inf_{t\in \mathbb{R}}\frac{f(t,v)}{v}>-\mu_0 \quad \hbox{ for all } |v|>M.
\end{equation}

It is also clear that for each $M>0$ we have 
\begin{equation*}
\int_{-M}^M|f(t,v)|dv \leqslant 8(1+M^4)c_0(t) \hbox{ for all } t\in \mathbb{R}.
\end{equation*}

Now, fixing $0<\mu_0<\lambda_1$ and considering $M=M(\mu_0)>0$ given by \eqref{propax}, we obtain
\begin{equation}\label{propF}
F(t,v)\geqslant -\tfrac{\mu_0+\lambda_1}{4}v^2-8(1+M^4)c_0(t) \quad \hbox{ for } |v|>M \hbox{ and } t\in \mathbb{R},
\end{equation}
and 
\begin{equation}\label{propF1}
|F(t,v)|\leqslant 8(1+M^4)c_0(t) \quad \hbox{ for } |v|\leqslant M \hbox{ and } t\in \mathbb{R}.
\end{equation}
Finally, we can prove that there exists a constant $e_0>0$ such that
\begin{equation*}
F(t,v)\leqslant vf(t,v)+\frac{\mu_0}{2}v^2+e_0 \quad \hbox{ for } |v|>M \hbox{ and } t\in\mathbb{R}.
\end{equation*}

\bigskip \noindent \textbf{Translation of the problem.} \bigskip

Fixing $s\in \mathbb{R}$ and setting $v(t,x):=u(t+s,x)$ for $t\geqslant 0$ and $x\in\Omega$, we formally have
\[
\begin{aligned}
&v_{tt}(t,x)-\Delta v(t,x)+k_s(t)v_t(t,x)+f_s(t, v(t,x)) -\int_{\Omega}K(x,y)v_t(t,y)dy-h( x)\\
&=u_{tt}(t+s, x)-\Delta u(t+s, x)+k_s(t)u_t(t+s, x)+f_s(t,u(t+s,x))-\int_{\Omega}K(x,y)u_t(t+s,y)dy-h(x)=0,
\end{aligned}
\]
where the boundary and initial conditions become
\begin{align*}
& v(t,x) = u(t+s,x) = 0 \quad \hbox{ for } (t,x)\in \left[0,\infty \right) \times \partial\Omega,\\
& v(0,x)=u(s,x)=u_0(x), \ v_t(0,x)=u_t(s,x)=u_1(x) \quad \hbox { for } x\in \Omega.
\end{align*}
Thus, we will study the \textit{translated} boundary and initial conditions problem
\begin{equation}\label{eqondap}\tag{tNWE}
\left\{\begin{aligned}
&v_{tt}(t,x)\!-\!\Delta v(t,x)\!+\!k_s(t)v_t(t,x)\!+\!f_s(t,v(t,x)) \!=\! \int_{\Omega}K(x,y)v_t(t,y)dy\! +\! h(x), \ (t,x)\!\in\! [0,\infty)\!\times\!\Omega,\\
& v(t,x) = 0, \ (t,x)\in [0,\infty)\times \partial \Omega,\\
& v(0,x)=u_0(x), \ v_t(0,x) = u_1(x), \ x\in \Omega,
\end{aligned}\right.
\end{equation}
instead of \eqref{ourproblem}. This problem is equivalent to the initial one, but with the nonautonomous terms being $k_s(\cdot)=k(\cdot+s)$ and $f_s(\cdot,\cdot)=f(\cdot+s,\cdot)$ instead of $k$ and $f$. Additionally, we denote $F_s(\cdot , \cdot)=F(\cdot + s, \cdot)$.

\newcommand{\vetor}[2]{\left[\begin{smallmatrix} #1 \\ #2 \end{smallmatrix}\right]}

Taking $w=v_t$ in \eqref{eqondap}, we obtain 
\[
w_t=v_{tt}=\Delta v-k_s(t)v_t-f_s(t,v)+\int_{\Omega}K(x,y)v_t(t,y)dy+h(x),
\]
and thus
\begin{equation*}
\begin{aligned}
\frac{d}{dt}\begin{bmatrix}v\\w\end{bmatrix}&=\begin{bmatrix}v_t\\w_t\end{bmatrix}=
\begin{bmatrix} w \\ \Delta v-k_s(t)v_t -f_s(t,v)+\int_{\Omega}K(x,y)v_t(t,y)dy+h(x)\end{bmatrix}\\
&=\begin{bmatrix} 0 & I \\ \Delta & 0 \end{bmatrix}\begin{bmatrix}v\\w\end{bmatrix}+\begin{bmatrix} 0 \\ \int_{\Omega}K(x,y)w(t,y)dy+h(x)-f_s(t,v)-k_s(t)w \end{bmatrix}.
\end{aligned}
\end{equation*}
Setting $X:=H^1_0(\Omega)\times L^2(\Omega)$, $V=\vetor{v}{w}$, $V_0=\vetor{u_0}{u_1}$, $\mathcal{A}=\left[\begin{smallmatrix} 0 & I \\ \Delta & 0 \end{smallmatrix}\right]$ and $\mathcal{G}_s(t,V)=\vetor{0}{G_s(t,V)}$, where 
\[
G_s(t,V)=\int_{\Omega}K(x,y)w(t,y)dy+h(x)-f_s(t,v)-k_s(t)w,
\]
we can represent \eqref{eqondap} by an abstract Cauchy problem
\begin{equation}\label{Abs.Ev.Equation}\tag{ACP}
\left\{\begin{aligned}
    &\frac{dV}{dt}=\mathcal{A}V+\mathcal{G}_s(t,V), \ t>0\\
    &V(0)=V_0\in X.
\end{aligned} \right.
\end{equation}
Clearly $X$ is a Hilbert space with the inner product defined by          
\begin{equation*}
\left\langle \vetor{v_1}{w_1},\vetor{v_2}{w_2} \right \rangle_X=\langle v_1,v_2\rangle_{H^1_0(\Omega)}+\langle w_1, w_2 \rangle_{L^2(\Omega)},
\end{equation*}
with associate norm
\begin{equation*}
\left\|\vetor{v}{w}\right\|_X^2= \|v\|^2_{H^1_0} + \| w\|^2_{L^2}.
\end{equation*}

Using Lumer-Phillips' Theorem and the results of \cite{Pazy} for semilinear evolution equations, we obtain the following:

\begin{proposition}\label{solutions.Ex}
Given $s\in \mathbb{R}$ and $V_0\in X$ there exists a unique maximal weak solution $V(\cdot,s,V_0)\colon [0,\tau_m)\to X$ of \eqref{Abs.Ev.Equation}, that is, a continuous function such that
\[
V(t,s,V_0) = e^{\mathcal{A}t}V_0 + \int_0^t e^{\mathcal{A}(t-\tau)} \mathcal{G}_s(\tau,V(\tau,s,V_0))d\tau \quad \hbox{ for } t\in [0,\tau_m),
\]
such that either $\tau_m=\infty$, or $\tau_m<\infty$ and $\limsup_{t\to \tau_m^-}\|V(t,s,V_0)\|_X = \infty$, where $\{e^{\mathcal{A}t}\colon t\geqslant 0\}$ is the $C^0$-semigroup of contractions generated by $\mathcal{A}$ in $X$.
\end{proposition}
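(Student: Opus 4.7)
The plan is to verify the hypotheses of the standard semilinear existence theorem (e.g.\ Theorem 6.1.4 in Pazy), which splits into two independent tasks: first, prove that $\mathcal{A}$ generates a $C^0$-semigroup of contractions on $X$ via Lumer--Phillips; and second, show that the nonlinearity $\mathcal{G}_s$ is continuous in $(t,V)$ and locally Lipschitz in $V$, uniformly on bounded time intervals.

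For the linear part, I would equip $\mathcal{A}$ with the natural domain $D(\mathcal{A}) = (H^2(\Omega)\cap H^1_0(\Omega)) \times H^1_0(\Omega)$ and compute, for $V=\left[\begin{smallmatrix} v\\ w\end{smallmatrix}\right]\in D(\mathcal{A})$,
\[
\langle \mathcal{A}V, V\rangle_X = \langle w, v\rangle_{H^1_0} + \langle \Delta v, w\rangle_{L^2} = \langle w, v\rangle_{H^1_0} - \langle \nabla v, \nabla w\rangle_{L^2} = 0,
\]
so $\mathcal{A}$ is skew-symmetric and in particular $\pm\mathcal{A}$ are both dissipative. Surjectivity of $I-\mathcal{A}$ reduces to solving the elliptic problem $v-\Delta v = f_1 + f_2$ in $H^1_0(\Omega)$ for arbitrary $(f_1,f_2)\in X$, which is handled by Lax--Milgram. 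Lumer--Phillips then yields the $C^0$-semigroup of contractions $\{e^{\mathcal{A}t}\colon t\geqslant 0\}$.

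For the nonlinear part, I would check each contribution to the second component of $\mathcal{G}_s(t,V)$. The integral operator $w\mapsto \int_\Omega K(x,y)w(y)dy$ is bounded from $L^2$ to $L^2$ with norm at most $K_0$ by \ref{cond2}; $h\in L^2$ is constant in $V$ by \ref{cond1}; and $-k_s(t)w$ is linear in $w$ with $|k_s(t)|\leqslant k_1$ and continuous in $t$ by \ref{cond3}. The delicate term is $-f_s(t,v)$, and this is where estimate \eqref{wvtlema} does the heavy lifting: combined with the continuity of $c_0$ in $t$, it shows that on each ball $\overline{B}_R^X$ and each compact interval $[0,T]$ of $t$, the map $(t,V)\mapsto f_s(t,v)$ is continuous from $[0,T]\times X$ into $L^2(\Omega)$ and Lipschitz in $v$ with constant $L_0(t)(1+2R^2)$. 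Assembling these pieces yields the required local Lipschitz property for $\mathcal{G}_s$.

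Once these two ingredients are in place, the standard semilinear theory produces a unique continuous function $V(\cdot,s,V_0)\colon [0,\tau_m)\to X$ satisfying the variation-of-constants formula, together with the blow-up alternative $\tau_m=\infty$ or $\limsup_{t\to \tau_m^-}\|V(t,s,V_0)\|_X = \infty$. The only genuine technical point is the local Lipschitz estimate for $f_s$, whose derivation uses the Sobolev embedding $H^1_0(\Omega)\hookrightarrow L^6(\Omega)$ valid in dimension three; since this bound is already recorded as \eqref{wvtlema} and the remaining terms are either linear, constant, or bounded, the argument is routine and I would not dwell on the calculation.
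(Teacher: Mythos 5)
Your proposal is correct and follows essentially the same route as the paper, which proves this proposition simply by invoking Lumer--Phillips for $\mathcal{A}$ and the standard semilinear theory of Pazy; you merely fill in the details the paper leaves implicit. One small imprecision: the continuity of $(t,V)\mapsto \mathcal{G}_s(t,V)$ in the time variable does not follow from \eqref{wvtlema} and the continuity of $c_0$ alone, but rather from the bound $\left|\frac{\partial f}{\partial t}(t,v)\right|\leqslant c_0(t)$ in \ref{cond4}, which gives $\|f_s(t,v)-f_s(t_0,v)\|_{L^2}\leqslant |t-t_0|\,|\Omega|^{1/2}\sup_{\sigma}c_0(\sigma)$ on compact time intervals.
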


Writing $V(t,s,V_0)=\vetor{v(t)}{w(t)}$ we can see that
\[
v \in C([0,\tau_m),H^1_0(\Omega)) \hbox{ and } w\in C([0,\tau_m),L^2(\Omega)).
\]
Since $w=v_t$, we conclude that
\[
v\in C([0,\tau_m),H^1_0(\Omega))\cap C^1([0,\tau_m),L^2(\Omega))
\]
is the unique maximal weak solution to \eqref{eqondap}. Furthermore, $v_{tt}\colon (0,\tau_m)\to H^{-1}(\Omega)$ is well-defined and
\begin{equation*}
\|v_{tt}(t)\|_{H^{-1}}  \leqslant \|v(t)\|_{H^1_0} + k_s(t) \lambda_1^{-\frac12}\|v_t(t)\|_{L^2} + \lambda_1^{-\frac12}\|f_s(t,v(t))\|_{L^2} +K_0\lambda_1^{-\frac12}\|v_t(t)\|_{L^2} + \lambda_1^{-\frac12}h_0.
\end{equation*}

Later we show that $\tau_m=\infty$ for all $s\in \mathbb{R}$ and $V_0\in X$ (see Remark \ref{remark.Global}). For $s\in \mathbb{R}$ and $V_0:=(u_0,u_1)\in X$, defining 
\begin{equation*}
S(t,s)(u_0,u_1) = V(t-s,s,V_0) \quad \hbox{ for } t\geqslant s,
\end{equation*}
setting $(u(t),y(t)) = S(t,s)(u_0,u_1)$, then $u(t)=v(t-s)$ and $y(t)=v_t(t-s)=u_t(t)$. Thus $(u(t),u_t(t))$ is the weak solution of \eqref{ourproblem}, and $S=\{S(t,s)\colon t\geqslant s\}$ defines an evolution process in $X$ associated with \eqref{ourproblem}, provided we have continuity with respect to initial data (which we also show, see Theorem \ref{Lipschitz}).

\subsection{Properties of uniform pullback absorption} \label{absorbingfamily}

In this section we prove the existence of a closed, positively invariant and bounded pullback absorbing family. Recall that $0<\mu_0<\lambda_1$ is fixed and we consider $M=M(\mu_0)>0$ given by \eqref{propax}. For $v\in L^2(\Omega)$ we define
\[
\Omega_1:=\{x\in\Omega\colon |v(x)|>M\} \quad \hbox{ and } \quad \Omega_2:=\{x\in\Omega\colon  |v(x)|\leqslant M\}.
\]
From \eqref{propF} and \eqref{propF1}, for $t\in \mathbb{R}$ we obtain
\[
\begin{aligned}
\int_{\Omega}& F(t,v)dx = \int_{\Omega_1}F(t,v)dx + \int_{\Omega_2}F(t,v)dx\\
&\geqslant -\int_{\Omega_1} \left[\tfrac{\mu_0+\lambda_1}{4} |v|^2+ 8c_0(t)(1+M^4)\right]dx - \int_{\Omega_2} 8c_0(t)(1+M^4) dx \geqslant -\tfrac{\mu_0+\lambda_1}{4}\|v\|^2_{L^2} - C_0(t), 
\end{aligned}
\]
where $C_0(t):=8(1+M^4)|\Omega|c_0(t)$. This means that
\begin{equation}\label{observaa}
\int_{\Omega} F(t,v)dx + \tfrac{\mu_0+\lambda_1}{4}\|v\|^2_{L^2}+C_0(t)\geqslant 0,
\end{equation}
and it is clear that $C_0\in {\mathcal{C}^\ast}$.

For each $V_0=(u_0,u_1)\in X$, we define the function $E_s(\cdot,V_0)\colon \mathbb{R}^+\to \mathbb{R}$ by
\begin{equation}\label{energyy}
 E_s(t,V_0)=\tfrac{1}{2}(\|v_t\|_{L^2}^2+\|v\|_{H^1_0}^2) + \int_{\Omega}F_s(t,v)dx+\tfrac{\lambda_1+\mu_0}{4}\|v\|^2_{L^2}+C_0(t+s),
\end{equation}
where $V(t,s,V_0)=(v(t),v_t(t))$ for $t\geqslant 0$. For simplicity, we write $E_s(t)$ instead of $E_s(t,V_0)$, but we must keep in mind that this function depends on $V_0$, since it depends on the solution $V(t,s,V_0)$. Note that for $t\geqslant s$ and $V_0=(u_0,u_1)\in X$, from \eqref{energyy} and \eqref{observaa} we have
\begin{align*}
\|S(t,s)(u_0,u_1)\|_X^2&=\|V(t-s,s,V_0)\|_X^2 \leqslant 2E_s(t-s).
\end{align*}
Hence, the study of the function $E_s$ is paramount, since this function bounds the norm of $S(t,s)(u_0,u_1)$ for $t\geqslant s$.

For $\varepsilon>0$ and $(v,v_t)=V(t,s,V_0)$ we define an auxiliary function $\mathcal{V}_\varepsilon^s(\cdot,V_0)\colon \mathbb{R}^+\to \mathbb{R}$ by
\begin{equation*}
\mathcal{V}_\varepsilon^s(t,V_0)=\tfrac{1}{2}\big(\|v\|^2_{H^1_0} + \|v_t\|^2_{L^2}\big)+\int_{\Omega}F_s(t,v)dx-\int_{\Omega}hvdx+\varepsilon\int_{\Omega}v_tvdx.
\end{equation*}
As we did for the function $E_s$, to simplify the notation we simply write $\mathcal{V}_\varepsilon^s(t)$ instead of $\mathcal{V}_\varepsilon^s(t,V_0)$. Adapting the results of \cite{Pecorari}, we obtain the following proposition. 

\begin{proposition}\label{propa401} 
For all $s\in \mathbb{R}$ and $t\geqslant 0$ we have
\begin{align*}
-\int_{\Omega}f_s(t,v)vdx&\leqslant -\int_{\Omega}F_s(t, v)dx - \tfrac{\lambda_1+\mu_0}{4}\|v\|^2_{L^2} - C_0(t+s)+\tfrac{1}{4}\left(\tfrac{3\mu_0}{\lambda_1}+1\right)\|v\|^2_{H^1_0}+g_0(t+s),
\end{align*}
where $g_0(t):=e_0|\Omega_1|+8c_0(t)(1+M^{4})|\Omega_2|+8Mc_0(t)(1+M^{4})+C_0(t)$ for all $t\in \mathbb{R}$. Furthermore, there exists a constant $K^\ast>0$ such that for all $s\in \mathbb{R}$, $t\geqslant 0$ and $\varepsilon>0$ we have
\begin{equation*}\label{propa303.a}
\begin{aligned}
& -k_s(t)\|v_t\|_{L^2}^{2}-\varepsilon k_s(t)\int_{\Omega}v_t vdx \leqslant -k_0\|v_t\|_{L^2}^{2}\left(1-\tfrac{\varepsilon K^\ast k_1}{k_0}\right)+\tfrac{\varepsilon}{12}\left(1-\tfrac{\mu_0}{\lambda_1}\right)\|v\|_{H^1_0}^2; 
\end{aligned}
\end{equation*}
Also, for all $s\in \mathbb{R}$, $t\geqslant 0$ and $\varepsilon>0$ we have
\begin{align*}
& \left|\int_{\Omega\times\Omega}K(x,y)v_t(t,y)v(t,x)dydx \right|\leqslant \tfrac{1}{12}\left(1-\tfrac{\mu_0}{\lambda_1}\right)\|v\|_{H^1_0}^2+\tfrac{3K_0^2}{\lambda_1-\mu_0}\|v_t\|^2_{L^2};\\
& \left|\int_{\Omega \times \Omega}K(x,y)v_t(t,y)v_t(t,x)  dydx\right| \leqslant K_0\|v_t\|^2_{L^2}.\\
& \left|\int_{\Omega}hv dx\right|\leqslant \tfrac{1}{16}\left(1-\tfrac{\mu_0}{\lambda_1}\right)\|v\|^2_{H^1_0}+\tfrac{4}{\lambda_1-\mu_0}h_0^2.
\end{align*}    
Finally, there exists $\varepsilon_0>0$ such that for all $0<\varepsilon \leqslant\varepsilon_0$, $s\in \mathbb{R}$ and $t\geqslant 0$ we have
\[
\tfrac14\left(1-\tfrac{\mu_0}{\lambda_1}\right) E_s(t) - d_0(t+s) \leqslant \mathcal{V}_\varepsilon^s(t) \leqslant \tfrac54 E_s(t) + d_0(t+s),
\]
where $d_0(t):=C_0(t)+\frac{4}{\lambda_1-\mu_0}h_0^2$ for each $t\in \mathbb{R}$.
\end{proposition}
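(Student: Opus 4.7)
The plan is to derive each displayed estimate by applying the Poincaré inequality $\|v\|_{L^2}^2\leqslant \lambda_1^{-1}\|v\|_{H^1_0}^2$, Cauchy--Schwarz, and Young's inequality with carefully calibrated parameters to the pointwise bounds on $f,F,K,h$ collected at the start of the section. The final comparison between $\mathcal{V}_\varepsilon^s$ and $E_s$ is then an assembly of these estimates together with a judicious choice of $\varepsilon_0$.

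For the first estimate I would split $\Omega = \Omega_1 \cup \Omega_2$. On $\Omega_1$, the pointwise inequality $F_s(t,v)\leqslant vf_s(t,v)+\tfrac{\mu_0}{2}v^2+e_0$ stated just before the proposition gives $-f_s v\leqslant -F_s+\tfrac{\mu_0}{2}v^2+e_0$; on $\Omega_2$, the growth bound $|f(t,v)|\leqslant 2c_0(t)(1+|v|^3)$ together with \eqref{propF1} controls $|f_s v|$ and $|F_s|$ by pure functions of $c_0(t+s)$ and $M$. Adding $\int_\Omega F_s\,dx+\tfrac{\lambda_1+\mu_0}{4}\|v\|_{L^2}^2+C_0(t+s)$ to both sides, the $\Omega_2$ contributions bundle into the three constants defining $g_0(t+s)$, while on $\Omega_1$ the combination $\tfrac{\mu_0}{2}\|v\|_{L^2}^2-\tfrac{\lambda_1+\mu_0}{4}\|v\|_{L^2}^2$ (after splitting and applying Poincaré to the positive part) yields the stated $\tfrac14\big(\tfrac{3\mu_0}{\lambda_1}+1\big)\|v\|_{H^1_0}^2$. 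For the damping/cross-term estimate I would use $k_0\leqslant k_s(t)\leqslant k_1$ from \ref{cond3} and apply Young's inequality to $\varepsilon k_s(t)\int_{\Omega} v_t v\,dx$, tuning the Young parameter via Poincaré so that the $\|v\|_{H^1_0}^2$ coefficient becomes exactly $\tfrac{\varepsilon}{12}(1-\mu_0/\lambda_1)$; the constant $K^\ast$ emerges from that calibration.

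The three kernel/forcing estimates all follow the same template: the identity $\|\int_\Omega K(\cdot,y)v_t(t,y)\,dy\|_{L^2}\leqslant K_0\|v_t\|_{L^2}$ (stated at the beginning of the section), Cauchy--Schwarz in $x$, and Young's inequality with parameter $\alpha=(\lambda_1-\mu_0)/6$ split $K_0\|v_t\|_{L^2}\|v\|_{L^2}$ into $\tfrac1{12}(1-\mu_0/\lambda_1)\|v\|_{H^1_0}^2+\tfrac{3K_0^2}{\lambda_1-\mu_0}\|v_t\|_{L^2}^2$; the bound for $\int K v_t v_t$ is plain Cauchy--Schwarz against $K_0\|v_t\|_{L^2}^2$, and the $|\int hv|$ estimate is the same maneuver with $h_0$ in place of $K_0$ and parameter chosen to produce the $\tfrac{1}{16}(1-\mu_0/\lambda_1)$ coefficient.

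For the final sandwich I would compute $\mathcal{V}_\varepsilon^s(t)-E_s(t)=-\int_\Omega hv\,dx+\varepsilon\int_\Omega v_t v\,dx-\tfrac{\lambda_1+\mu_0}{4}\|v\|_{L^2}^2-C_0(t+s)$, bound $|\int hv|$ by the $h$-estimate just established and $\varepsilon\int v_t v\,dx$ by Young, and use \eqref{observaa} to keep $\int_\Omega F_s\,dx+\tfrac{\lambda_1+\mu_0}{4}\|v\|_{L^2}^2+C_0(t+s)\geqslant 0$ whenever needed. Choosing $\varepsilon_0$ small enough in terms of $\lambda_1$ and $\mu_0$ forces the $\|v_t\|_{L^2}^2$ and $\|v\|_{H^1_0}^2$ generated by the cross-term to be dominated by a small multiple of $\tfrac12(\|v\|_{H^1_0}^2+\|v_t\|_{L^2}^2)$ already present in $E_s$, delivering the lower bound $\tfrac14(1-\mu_0/\lambda_1)E_s - d_0$ and the upper bound $\tfrac54 E_s+d_0$ with $d_0(t+s)=C_0(t+s)+\tfrac{4}{\lambda_1-\mu_0}h_0^2$. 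The main obstacle I expect is precisely this bookkeeping: keeping $\varepsilon_0$ independent of $(t,s)$ while forcing all residual scalar terms to collapse exactly into $d_0$ rather than a larger tail; since that is where miscalibrations of the Young parameters tend to accumulate, I would track those coefficients with particular care.
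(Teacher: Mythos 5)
Your proof is correct and is essentially the intended one: the paper itself omits the proof of Proposition \ref{propa401} (deferring to \cite{Pecorari}), but the estimates it assembles immediately beforehand --- the bound $\left\|\int_\Omega K(\cdot,y)v_t(y)\,dy\right\|_{L^2}\leqslant K_0\|v_t\|_{L^2}$, \eqref{propax}, \eqref{propF}, \eqref{propF1}, the inequality $F\leqslant vf+\tfrac{\mu_0}{2}v^2+e_0$ for $|v|>M$, and \eqref{observaa} --- are exactly the ingredients of your argument, and your Young/Poincar\'e calibrations (which force, e.g., $K^\ast=\tfrac{3k_1}{\lambda_1-\mu_0}$ and an $\varepsilon_0$ of order $\tfrac{\sqrt{\lambda_1}}{8}\bigl(1-\tfrac{\mu_0}{\lambda_1}\bigr)$, independent of $t$ and $s$ as required) reproduce every stated coefficient, including the key identity $\tfrac12-\tfrac{\lambda_1+\mu_0}{4\lambda_1}=\tfrac14\bigl(1-\tfrac{\mu_0}{\lambda_1}\bigr)$ behind the lower bound of the sandwich. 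The only discrepancy is harmless and lies in the statement rather than in your argument: bounding $-\int_{\Omega_2}f_s(t,v)v\,dx$ via $|f|\leqslant 2c_0(1+|v|^3)$ yields $2Mc_0(t+s)(1+M^3)|\Omega_2|$, so the third term of $g_0$ should carry a factor $|\Omega_2|$ (the paper writes $8Mc_0(t)(1+M^{4})$ without it); since every later use of $g_0$ relies only on $g_0\in\mathcal{C}^\ast$, this changes nothing downstream.
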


Note that the functions $g_0$ and $d_0$ mentioned in the previous proposition are in ${\mathcal{C}^\ast}$. Formally multiplying \eqref{eqondap} by $v_t+\varepsilon v$ and integrating over $\Omega$ we can show that
\begin{equation}\label{vepsilont}
\begin{aligned}
\frac{d}{dt}& \mathcal{V}_\varepsilon^s(t)=-k_s(t)\|v_t\|^{2}_{L^2}-\varepsilon k_s(t)\int_{\Omega}v_tv dx +\int_{\Omega}\frac{\partial F_s}{\partial t}(t, v) dx \\ 
&\qquad +\int_{\Omega \times \Omega}K(x,y)v_t(t,y)v_t(t,x)dydx  -\varepsilon\|v\|^2_{H^1_0}+\varepsilon\|v_t\|^2_{L^2}-\varepsilon\int_{\Omega}f_s(t, v)vdx \\
&\qquad +\varepsilon \int_{\Omega \times \Omega} K(x,y)v_t(t,y)v(t,x)dydx+\varepsilon \int_{\Omega}hvdx.
\end{aligned}
\end{equation}

\begin{lemma}
There exists $\varepsilon_1\in (0,\varepsilon_0]$ such that for all $\varepsilon\in (0,\varepsilon_1]$, $s\in\mathbb{R}$ and $t\geqslant 0$ we have
    \begin{equation*}
    \frac{d}{dt}\mathcal{V}_\varepsilon^s(t)\leqslant -\frac{4}{5}\left(1-\frac{\mu_0}{\lambda_1}\right)\varepsilon \mathcal{V}_\varepsilon^s(t)+\delta_1(t+s),
    \end{equation*}
where $\delta_1(t):=\tfrac{4}{5}\varepsilon_0\big(1-\tfrac{\mu_0}{\lambda_1}\big)d_0(t)+c_0(t)+\varepsilon_0g_0(t) + \tfrac{4\varepsilon_0}{\lambda_1-\mu_0}h_0^2$ for each $t\in \mathbb{R}.$ Furthermore, for every $s\in\mathbb{R}$ and $t\geqslant 0$ we have
\[
\mathcal{V}_\varepsilon^s(t)\leqslant \mathcal{V}_\varepsilon^s(0)e^{-\beta \varepsilon t}+\int_0^t\delta_1(r+s)e^{-\beta \varepsilon(t-r)}dr,
\]
where $\beta:=\frac45(1-\frac{\mu_0}{\lambda_1})$.
\end{lemma}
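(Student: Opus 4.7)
The strategy is to start from the differential identity \eqref{vepsilont} and bound each of its eight right-hand side summands using the estimates gathered in Proposition \ref{propa401}, together with hypothesis \ref{cond5}, which allows the term $\int_\Omega \frac{\partial F_s}{\partial t}(t,v)dx$ to be bounded by $c_0(t+s)$. Introducing $\alpha := 1-\tfrac{\mu_0}{\lambda_1}$ and the quantity
\[
\tilde F_s(t) := \int_\Omega F_s(t,v)dx + \tfrac{\lambda_1+\mu_0}{4}\|v\|_{L^2}^2 + C_0(t+s),
\]
which is nonnegative by \eqref{observaa} and satisfies $E_s(t) = \tfrac12(\|v_t\|_{L^2}^2 + \|v\|_{H^1_0}^2) + \tilde F_s(t)$, the estimate of Proposition \ref{propa401} applied to $-\varepsilon\int_\Omega f_s(t,v)v\,dx$ produces exactly $-\varepsilon\tilde F_s(t)$ (plus an $\|v\|_{H^1_0}^2$ contribution), and the remaining scalar terms assemble into $c_0(t+s) + \varepsilon g_0(t+s) + \tfrac{4\varepsilon}{\lambda_1-\mu_0}h_0^2$.

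Collecting coefficients, the $\|v\|_{H^1_0}^2$-coefficient becomes
\[
\varepsilon\alpha\Bigl(\tfrac{1}{12}+\tfrac{1}{12}+\tfrac{1}{16}\Bigr) -\varepsilon + \varepsilon\cdot\tfrac14\Bigl(\tfrac{3\mu_0}{\lambda_1}+1\Bigr)= -\tfrac{25}{48}\varepsilon\alpha \leqslant -\tfrac{\alpha\varepsilon}{2},
\]
where I used $\tfrac14\bigl(\tfrac{3\mu_0}{\lambda_1}+1\bigr) = 1-\tfrac{3\alpha}{4}$ so that the two bare $\pm\varepsilon$'s cancel. The $\|v_t\|_{L^2}^2$-coefficient equals $-(k_0-K_0)+\varepsilon c_1$ with $c_1 := K^\ast k_1 + 1 + \tfrac{3K_0^2}{\lambda_1-\mu_0}$. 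Since $k_0>K_0$ by \ref{cond3}, the choice $\varepsilon_1 := \min\bigl\{\varepsilon_0,\,\tfrac{k_0-K_0}{c_1+\alpha/2}\bigr\}$ forces this coefficient to be at most $-\tfrac{\alpha\varepsilon}{2}$ for every $\varepsilon\in(0,\varepsilon_1]$. Using also $-\varepsilon \tilde F_s \leqslant -\alpha\varepsilon \tilde F_s$ (valid since $\tilde F_s\geqslant 0$ and $\alpha\leqslant 1$), the three negative contributions merge into $-\alpha\varepsilon E_s(t)$.

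Invoking the upper estimate $\mathcal V_\varepsilon^s(t) \leqslant \tfrac54 E_s(t) + d_0(t+s)$ of Proposition \ref{propa401} rewrites $-\alpha\varepsilon E_s(t)$ as $-\beta\varepsilon\mathcal V_\varepsilon^s(t) + \beta\varepsilon d_0(t+s)$ with $\beta=\tfrac45\alpha$, and bounding $\varepsilon\leqslant\varepsilon_0$ in each of the $\varepsilon$-remainders produces precisely $\delta_1(t+s)$. This proves the differential inequality. For the integral bound, multiplying both sides by the integrating factor $e^{\beta\varepsilon t}$ gives $\tfrac{d}{dt}\bigl(e^{\beta\varepsilon t}\mathcal V_\varepsilon^s(t)\bigr) \leqslant e^{\beta\varepsilon t}\delta_1(t+s)$; integrating from $0$ to $t$ and dividing by $e^{\beta\varepsilon t}$ yields the stated formula.

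The only delicate point I anticipate is the coefficient arithmetic on $\|v\|_{H^1_0}^2$: the inequality $\tfrac{1}{12}+\tfrac{1}{12}+\tfrac{1}{16}-\tfrac{3}{4} \leqslant -\tfrac12$ (i.e. $-\tfrac{25}{48}\leqslant -\tfrac{24}{48}$) holds, but only thanks to the sharp $\tfrac{1}{16}\alpha$ in the bound on $\int_\Omega hv\,dx$ from Proposition \ref{propa401} (rather than the more common $\tfrac14\alpha$), which leaves just enough slack to absorb the $\tfrac{3\alpha}{4}\|v\|_{H^1_0}^2$ contribution coming from $-\varepsilon\int f_s v\,dx$. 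Once this verification is in place, the rest is entirely routine.
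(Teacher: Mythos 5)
Your proof is correct and takes essentially the same approach as the paper's: substituting the estimates of Proposition \ref{propa401} and hypothesis \ref{cond5} into \eqref{vepsilont}, collecting coefficients (your arithmetic giving $-\tfrac{25}{48}\varepsilon\alpha$ on $\|v\|_{H^1_0}^2$ matches the paper's step (1), and your smallness condition $\varepsilon(c_1+\alpha/2)\leqslant k_0-K_0$ is exactly the paper's condition \eqref{epsiloncond}), merging the negative terms into $-\varepsilon\big(1-\tfrac{\mu_0}{\lambda_1}\big)E_s(t)$, converting via $\mathcal{V}_\varepsilon^s(t)\leqslant \tfrac54 E_s(t)+d_0(t+s)$, and concluding with Gronwall's inequality. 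No gaps; the two arguments coincide up to bookkeeping.
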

\begin{proof}
    Since, from \ref{cond5}, $\int_\Omega \frac{\partial F_s}{\partial t}(t,x)dx \leqslant c_0(t+s)$, from \eqref{vepsilont} and Proposition \ref{propa401} we obtain
\begin{equation*}
\begin{aligned}
& \frac{d}{dt} \mathcal{V}_\varepsilon^s(t)\leqslant -k_0\|v_t\|_{L^2}^{2}\big(1-\tfrac{\varepsilon K^\ast k_1}{k_0} \big)+\tfrac{\varepsilon}{12}\left(1-\tfrac{\mu_0}{\lambda_1}\right)\|v\|_{H^1_0}^2+c_0(t+s)+K_0\|v_t\|^2_{L^2}-\varepsilon\|v\|^2_{H^1_0}\\ 
&\quad +\varepsilon\|v_t\|^2_{L^2}-\varepsilon\int_{\Omega}F_s(t, v)dx - \varepsilon\tfrac{\lambda_1+\mu_0}{4}\|v\|^2_{L^2} -\varepsilon C_0(t+s) +\tfrac{\varepsilon}{4}\left(\tfrac{3\mu_0}{\lambda_1}+1\right)\|v\|^2_{H^1_0}\\
&\quad+\varepsilon g_0(t+s) + \tfrac{\varepsilon}{12}\left(1-\tfrac{\mu_0}{\lambda_1}\right)\|v\|_{H^1_0}^2 +\tfrac{3\varepsilon K_0^2}{\lambda_1-\mu_0}\|v_t\|^2_{L^2}+\tfrac{\varepsilon}{16}\left(1-\tfrac{\mu_0}{\lambda_1}\right)\|v\|^2_{H^1_0}+\tfrac{4\varepsilon}{\lambda_1-\mu_0}h_0^2  \\
&\stackrel{(1)}{\leqslant} -k_0\|v_t\|_{L^2}^{2}\left[1-\tfrac{K_0}{k_0}-\varepsilon\left(\tfrac{K^\ast k_1}{k_0}+\tfrac{1}{k_0}+\tfrac{3K_0^2}{(\lambda_1-\mu_0)k_0}+\tfrac{\lambda_1-\mu_0}{2k_0\lambda_1}\right)\right]- \tfrac{25\varepsilon}{48}\left(1-\tfrac{\mu_0}{\lambda_1}\right)\|v\|^2_{H^1_0} \\
&\quad -\tfrac{\varepsilon}{2}\left(1-\tfrac{\mu_0}{\lambda_1}\right)\|v_t\|^2_{L^2}-\varepsilon\int_{\Omega}F_s(t,v)dx-\varepsilon\tfrac{\lambda_1+\mu_0}{4}\|v\|^2_{L^2}\\
&\quad+c_0(t+s)- \varepsilon C_0(t+s) + \varepsilon_0 g_0(t+s) + \tfrac{4\varepsilon_0}{\lambda_1-\mu_0}h_0^2\\
&\stackrel{(2)}{\leqslant} -\tfrac{\varepsilon}{2}\left(1-\tfrac{\mu_0}{\lambda_1}\right)(\|v_t \|^2_{L^2} + \|v\|^2_{H^1_0})-\varepsilon\Big[\int_{\Omega}F_s(t,v)dx+\tfrac{\lambda_1+\mu_0}{4}\|v\|_{L^2}^2 + C_0(t+s)\Big]\\
&\quad+c_0(t+s)+\varepsilon_0g_0(t+s) + \tfrac{4\varepsilon_0}{\lambda_1-\mu_0}h_0^2\\
&\leqslant -\tfrac{4}{5}\big(1-\tfrac{\mu_0}{\lambda_1}\big)\varepsilon \mathcal{V}_\varepsilon^s(t)+\tfrac{4}{5}\varepsilon_0\big(1-\tfrac{\mu_0}{\lambda_1}\big)d_0(t+s)+c_0(t+s)+\varepsilon_0g_0(t+s) + \tfrac{4\varepsilon_0}{\lambda_1-\mu_0}h_0^2 
\end{aligned}
\end{equation*}
where in $(1)$ we added and subtracted the term $\tfrac{\varepsilon}{2}(1-\tfrac{\mu_0}{\lambda_1})\|v_t\|^2_{L^2}$ and used Poincar\'e's inequality. In (2) we choose $\varepsilon_1 \in (0,\varepsilon_0]$ such that 
\begin{equation}\label{epsiloncond}
\varepsilon\left(\tfrac{K^\ast k_1}{k_0}+\tfrac{1}{k_0}+\tfrac{3K_0^2}{(\lambda_1-\mu_0)k_0}+\tfrac{\lambda_1-\mu_0}{2k_0\lambda_1}\right)\leqslant 1-\tfrac{K_0}{k_0} \quad \hbox{ for all } \varepsilon \in (0,\varepsilon_1],
\end{equation}
which is possible since $0\leqslant K_0<k_0$.

The last claim follows directly from Gronwall's inequality, and the proof is complete.
\end{proof}

Note that the function $\delta_1$ is in $\mathcal{C}^\ast$.

\begin{remark}[Solutions to \eqref{Abs.Ev.Equation} are global] \label{remark.Global}
For $s\in \mathbb{R}$ and $V_0\in X$, the arguments up until now work for $t\in [0,\tau_m)$, and we obtain
\[
\mathcal{V}_\varepsilon^s(t) \leqslant \mathcal{V}_\varepsilon^s(0)e^{-\beta \varepsilon t} + \int_0^t \delta_1(r+s)e^{-\beta \varepsilon (t-r)}dr \quad \hbox{ for } t\in [0,\tau_m).
\]
From \eqref{funcFimp.b} and the continuous inclusion $H^1_0(\Omega)\hookrightarrow L^4(\Omega)$, with constant $c>0$, we obtain 
\begin{align*}
\Big|\int_{\Omega}&F(s, v) dx\Big| \leqslant \int_{\Omega}|F(s, v)| dx \leqslant \int_{\Omega}4c_0(s)(1+|v|^4) dx\\
& =4c_0(s)|\Omega| + 4c_0(s)\|v\|_{L^4}^{4} \leqslant 4c_0(s)|\Omega|+4c_0(s)c^4\|v\|_{H^1_0}^4  \leqslant \alpha_0(s)(1+\left\|v\right\|^4_{H^1_0}),    
\end{align*}
where $\alpha_0(s)=4\max\{|\Omega|,c^4\}c_0(s)$. Thus 
\begin{align*}
E_s(0)&=\tfrac{1}{2}\left(\left\|u_1\right\|_{L^2}^2+\| u_0\|_{H^1_0}^2\right)+\int_{\Omega}F(s, u_0)dx+\tfrac{\lambda_1+\mu_0}{4}\|u_0\|^2_{L^2} +C_0(s)\\
&\leqslant \tfrac{1}{2}\|V_0\|^2_X+\alpha_0(s)(1+\| u_0\|^4_{H^1_0})+\tfrac{\lambda_1+\mu_0}{4\lambda_1}\|u_0\|^2_{H^1_0}+C_0(s)\\
& \leqslant \Big(\tfrac12+\tfrac{\lambda_1+\mu_0}{2\lambda_1}\Big)\|V_0\|^2_X + \alpha_0(s)(1+\|V_0\|^4_X) + C_0(s).
\end{align*}
Using Proposition \ref{propa401} we obtain
\begin{align*}
\mathcal{V}_\varepsilon^s(0)&\leqslant \tfrac{5}{4}E_s(0)+d_0(s)\\
&\leqslant \tfrac54\Big(\tfrac12+\tfrac{\lambda_1+\mu_0}{2\lambda_1}\Big)\|V_0\|^2_X + \tfrac54\alpha_0(s)(1+\|V_0\|^4_X) + \tfrac54C_0(s)+d_0(s).
\end{align*}
Again, for $t\in [0,\tau_m)$, Proposition \ref{propa401} gives us
\begin{equation}
\begin{aligned}\label{cr2cr1}
\|V(t,s,V_0)\|^2_X & \leqslant 2E_s(t) \leqslant \tfrac{8\lambda_1}{\lambda_1-\mu_0}\mathcal{V}_\varepsilon^s(t) + \tfrac{8\lambda_1}{\lambda_1-\mu_0}d_0(t+s)\\
& \leqslant \tfrac{8\lambda_1}{\lambda_1-\mu_0}\Big[\tfrac54\Big(\tfrac12+\tfrac{\lambda_1+\mu_0}{2\lambda_1}\Big)\|V_0\|^2_X + \tfrac54\alpha_0(s)(1+\|V_0\|^4_X) + \tfrac54C_0(s)+d_0(s)\Big]e^{-\beta \varepsilon t}\\
& \qquad + \tfrac{8\lambda_1}{\lambda_1-\mu_0}\int_0^t \delta_1(r+s)e^{-\beta \varepsilon (t-r)}dr +  \tfrac{8\lambda_1}{\lambda_1-\mu_0}d_0(t+s).
\end{aligned}
\end{equation}
Making $t\to \tau_m^-$ we see that $\limsup_{t\to \tau_m^-}\|V(t,s,V_0)\|_X<\infty$, which implies that $\tau_m=\infty$. 
\end{remark}

With these considerations, we state and prove the main result of this section.

\begin{theorem}[Existence of a pullback absorbing family]\label{teo.existence.paf}
Given $\hat{D}\in \mathfrak{D}_{\mathcal{C}^\ast}$, there exists $\Gamma\in {\mathcal{C}^\ast}$ such that for all $t\in \mathbb{R}$, $\tau\geqslant 0$ and $V_0=(u_0,u_1)\in D_{t-\tau}$ we have
\[
\|S(t,t-\tau)(u_0,u_1)\|^2_X \leqslant \Gamma(t-\tau)e^{-\beta \varepsilon \tau} +  \tfrac{8\lambda_1}{\lambda_1-\mu_0}\int_0^\infty \delta_1(t-u)e^{-\beta \varepsilon u}du +  \tfrac{8\lambda_1}{\lambda_1-\mu_0}d_0(t).
\]
Furthermore, defining
\[
r_0(\xi):= \left\{\tfrac{8\lambda_1}{\lambda_1-\mu_0}\int_0^\infty \delta_1(\xi-u)e^{-\beta \varepsilon u}du +  \tfrac{8\lambda_1}{\lambda_1-\mu_0}d_0(\xi)+1\right\}^\frac12 \quad \hbox{ for } \xi \in \mathbb{R},
\]
and $B_\xi = \overline{B}^X_{r_0(\xi)}(0)$ for each $\xi\in \mathbb{R}$, the family $\hat{B}=\{B_t\}_{t\in \mathbb{R}}\in \mathfrak{D}_{\mathcal{C}^\ast}$ is uniformly $\mathfrak{D}_{\mathcal{C}^\ast}$-pullback absorbing for $S$.
\end{theorem}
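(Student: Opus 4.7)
The strategy is to recycle the estimate \eqref{cr2cr1} derived in Remark \ref{remark.Global}, translate it into the language of the process $S$, and then unpack the closure properties of $\mathcal{C}^\ast$ to identify $\Gamma$ and check that the candidate family $\hat{B}$ lies in $\mathfrak{D}_{\mathcal{C}^\ast}$ and absorbs.

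\textbf{Step 1: Translation of the a priori bound.} Recall that $S(t,t-\tau)(u_0,u_1)=V(\tau,t-\tau,V_0)$. I would apply \eqref{cr2cr1} with the dummy pair $(t,s)$ there replaced by $(\tau,t-\tau)$. The last term becomes $\tfrac{8\lambda_1}{\lambda_1-\mu_0}d_0(t)$; the integral becomes $\int_0^\tau\delta_1(r+t-\tau)e^{-\beta\varepsilon(\tau-r)}dr$, which under the change of variable $u=\tau-r$ turns into $\int_0^\tau\delta_1(t-u)e^{-\beta\varepsilon u}du\leqslant\int_0^\infty\delta_1(t-u)e^{-\beta\varepsilon u}du$. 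Since $\hat{D}\in\mathfrak{D}_{\mathcal{C}^\ast}$, there exists $r\in\mathcal{C}^\ast$ with $D_\xi\subset\overline{B}^X_{r(\xi)}$ for every $\xi$, so for $V_0\in D_{t-\tau}$ we may replace $\|V_0\|_X^2,\|V_0\|_X^4$ by $r(t-\tau)^2,r(t-\tau)^4$ respectively, while $\alpha_0(t-\tau),C_0(t-\tau),d_0(t-\tau)$ appear evaluated at $t-\tau$. Collecting the coefficient of $e^{-\beta\varepsilon\tau}$ defines the candidate
\[
\Gamma(\xi):=\tfrac{8\lambda_1}{\lambda_1-\mu_0}\Big[\tfrac{5}{4}\Big(\tfrac{1}{2}+\tfrac{\lambda_1+\mu_0}{2\lambda_1}\Big)r(\xi)^2+\tfrac{5}{4}\alpha_0(\xi)\bigl(1+r(\xi)^4\bigr)+\tfrac{5}{4}C_0(\xi)+d_0(\xi)\Big].
\]

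\textbf{Step 2: $\Gamma\in\mathcal{C}^\ast$.} Since $r,\alpha_0,C_0,d_0\in\mathcal{C}^\ast$ (the former by hypothesis, the latter three built from $c_0\in\mathcal{C}^\ast$ via \ref{cond6}), and $\mathcal{C}^\ast$ is closed under sums, positive scalar multiples and products (footnote 1), the functions $r^2,r^4,\alpha_0\cdot(1+r^4)$ belong to $\mathcal{C}^\ast$ and so does $\Gamma$. This yields the claimed pointwise inequality.

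\textbf{Step 3: $\hat{B}\in\mathfrak{D}_{\mathcal{C}^\ast}$.} I need $r_0\in\mathcal{C}^\ast$. The delicate point is to show that $I(\xi):=\int_0^\infty\delta_1(\xi-u)e^{-\beta\varepsilon u}du$ lies in $\mathcal{C}^\ast$; everything else is routine from closure properties (sum of $\mathcal{C}^\ast$-functions plus constant $1$, then square root). Given $\alpha>0$ and $t\in\mathbb{R}$, pick any $\alpha'\in(0,\min(\alpha,\beta\varepsilon))$. Since $\delta_1\in\mathcal{C}^\ast$, there exists $C=C(t,\alpha')>0$ with $\delta_1(s-\sigma)\leqslant C e^{\alpha'\sigma}$ for every $s\leqslant t$ and $\sigma\geqslant 0$; applying this with $\sigma=\tau+u$ and integrating,
\[
I(s-\tau)=\int_0^\infty\delta_1\bigl(s-(\tau+u)\bigr)e^{-\beta\varepsilon u}du\leqslant C e^{\alpha'\tau}\int_0^\infty e^{-(\beta\varepsilon-\alpha')u}du=\tfrac{C}{\beta\varepsilon-\alpha'}e^{\alpha'\tau}.
\]
Multiplying by $e^{-\alpha\tau}$ and letting $\tau\to\infty$ (uniformly in $s\leqslant t$) proves the decay condition; continuity of $I$ follows from dominated convergence using the same exponential bound. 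Hence $I\in\mathcal{C}^\ast$, whence $r_0\in\mathcal{C}^\ast$ and $\hat{B}\in\mathfrak{D}_{\mathcal{C}^\ast}$.

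\textbf{Step 4: Absorption.} Given $\hat{D}\in\mathfrak{D}_{\mathcal{C}^\ast}$ and $t\in\mathbb{R}$, since $\Gamma\in\mathcal{C}^\ast$ we have $\sup_{s\leqslant t}\Gamma(s-\rho)e^{-\beta\varepsilon\rho}\to 0$ as $\rho\to\infty$, so there is $T=T(\hat{D},t)>0$ with this supremum at most $1$ for $\rho\geqslant T$. Applying Step 1 with $(t,\tau)$ replaced by $(s,\rho)$ for $s\leqslant t$ and $\rho\geqslant T$, the right-hand side is bounded by $1+\tfrac{8\lambda_1}{\lambda_1-\mu_0}\int_0^\infty\delta_1(s-u)e^{-\beta\varepsilon u}du+\tfrac{8\lambda_1}{\lambda_1-\mu_0}d_0(s)=r_0(s)^2$, so $S(s,s-\rho)D_{s-\rho}\subset B_s$, finishing the proof.

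The main obstacle is the verification in Step 3 that the exponentially-weighted convolution $I$ belongs to $\mathcal{C}^\ast$; the rest is bookkeeping around the estimate already produced in Remark \ref{remark.Global}.
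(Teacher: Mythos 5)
Your proposal is correct and follows essentially the same route as the paper: the paper's proof also applies \eqref{cr2cr1} with the translated arguments, defines exactly your $\Gamma$, and runs the identical absorption argument with $\sup_{s\leqslant t}\Gamma(s-\tau)e^{-\beta\varepsilon\tau}\leqslant 1$.

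The only place you diverge is Step 3. The paper delegates the fact that $\xi\mapsto\int_0^\infty\delta_1(\xi-u)e^{-\beta\varepsilon u}\,du$ lies in $\mathcal{C}^\ast$ to its appendix Lemma \ref{integralemC*}, whose proof simply pushes the uniform decay estimate under the integral sign: for $\tau\geqslant\tau_0$ and $s\leqslant t$ one has $\delta_1(s-u-\tau)e^{-\alpha\tau}<\eta\epsilon$ because $s-u\leqslant t$ for every $u\geqslant 0$, so no majorant for $\delta_1$ itself is ever needed. You instead build an exponential majorant, asserting that $\delta_1\in\mathcal{C}^\ast$ gives $C=C(t,\alpha')$ with $\delta_1(s-\sigma)\leqslant Ce^{\alpha'\sigma}$ for \emph{all} $s\leqslant t$ and $\sigma\geqslant 0$. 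That assertion is true but not immediate from the definition: the $\mathcal{C}^\ast$ condition only yields it for $\sigma\geqslant\sigma_0$, and for $\sigma\in[0,\sigma_0]$ you must note that $\delta_1$ is bounded on $(-\infty,t]$ (the estimate at $\sigma=\sigma_0$ bounds $\delta_1$ on $(-\infty,t-\sigma_0]$, and continuity bounds it on $[t-\sigma_0,t]$). With that two-line observation supplied, your argument is complete, and it has the small advantage of also justifying finiteness and continuity of the convolution integral (via dominated convergence), points the paper's lemma passes over in silence.
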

\begin{proof}
Let $t\in \mathbb{R}$, $\tau\geqslant 0$, $\hat{D}\in \mathfrak{D}_{\mathcal{C}^\ast}$ and $V_0\in D_{t-\tau}$. If $r\in {\mathcal{C}^\ast}$ is such that $D_\xi \subset \overline{B}_{r(\xi)}(0)$ for all $\xi\in \mathbb{R}$, from \eqref{cr2cr1} we obtain
\begin{align*}
\|V(\tau,t-\tau,V_0)\|^2_X & \leqslant \Gamma(t-\tau) e^{-\beta \varepsilon \tau} + \tfrac{8\lambda_1}{\lambda_1-\mu_0}\int_0^\tau \delta_1(r+t-\tau)e^{-\beta \varepsilon (\tau-r)}dr +  \tfrac{8\lambda_1}{\lambda_1-\mu_0}d_0(t)\\
& \leqslant \Gamma(t-\tau) e^{-\beta \varepsilon \tau} + \tfrac{8\lambda_1}{\lambda_1-\mu_0}\int_0^\infty \delta_1(t-u)e^{-\beta \varepsilon u}du +  \tfrac{8\lambda_1}{\lambda_1-\mu_0}d_0(t), 
\end{align*}
where for $\xi\in \mathbb{R}$:
\[
\Gamma(\xi):=\tfrac{8\lambda_1}{\lambda_1-\mu_0}\Big[\tfrac54\Big(\tfrac12+\tfrac{\lambda_1+\mu_0}{2\lambda_1}\Big)r(\xi)^2 + \tfrac54\alpha_0(\xi)(1+r(\xi)^4) + \tfrac54C_0(\xi)+d_0(\xi)\Big],
\]
and it is clear that $\Gamma \in {\mathcal{C}^\ast}$. Since $\Gamma\in {\mathcal{C}^\ast}$, there exists $\tau_0 = \tau_0(\hat{D},t)>0$ such that $\sup_{s\leqslant t}\Gamma(s-\tau)e^{-\beta \varepsilon \tau}\leqslant 1$. Thus, for all $s\leqslant t$ and $\tau\geqslant \tau_0$ we have
\[
\|S(s,s-\tau)(u_0,u_1)\|^2_X \leqslant 1+\tfrac{8\lambda_1}{\lambda_1-\mu_0}\int_0^\infty \delta_1(s-u)e^{-\beta \varepsilon u}du +  \tfrac{8\lambda_1}{\lambda_1-\mu_0}d_0(s) = r_0(s)^2,
\]
that is, $S(s,s-\tau)D_{s-\tau}\subset \overline{B}^X_{r_0(s)}(0)=B_s$. This proves that $\hat{B}$ is uniformly $\mathfrak{D}_{\mathcal{C}^\ast}$-pullback absorbing for $S$.

Lastly, using Lemma \ref{integralemC*} we conclude that $r_0^2\in \mathcal{C}^\ast$, which implies that $r_0\in \mathcal{C}^\ast$. Hence $\hat{B}\in \mathfrak{D}_{\mathcal{C}^\ast}$, and the proof is complete.
\end{proof}

\bigskip \noindent \textbf{Existence of a closed, positively invariant and uniformly pullback absorbing family in $\mathfrak{D}_{\mathcal{C}^\ast}$} \label{obtainingC}\bigskip

Using the family $\hat{B}$ presented above, we will construct a family $\hat{C}\in\mathfrak{D}_{\mathcal{C}^\ast}$ which is closed, positively invariant and uniformly $\mathfrak{D}_{\mathcal{C}^\ast}$-pullback absorbing for $S$. Since $\hat{B}\in \mathfrak{D}_{\mathcal{C}^\ast}$, given $t\in \mathbb{R}$, there exists $\tau_1\geqslant 0$ such that $S(s,s-\tau)B_{s-\tau}\subset B_s$ for all $\tau\geqslant \tau_1$ and $s\leqslant t$. Consider the family $\hat{C}=\left\{C_t\right\}_{t\in\mathbb{R}}$ defined by 
\begin{equation}\label{eq.def.C}
    C_t=\overline{\bigcup_{\tau\geqslant \tau_1}S(t,t-\tau)B_{t-\tau}} \quad \hbox{ for each } t\in \mathbb{R}.
\end{equation}

\begin{theorem}
    The family $\hat{C}$ is closed, positively invariant, uniformly $\mathfrak{D}_{\mathcal{C}^\ast}$-pullback absorbing for $S$, and $\hat{C}\in \mathfrak{D}_{\mathcal{C}^\ast}$.
\end{theorem}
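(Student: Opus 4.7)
My plan is to verify the four asserted properties of $\hat{C}$ directly from its defining formula $C_t=\overline{\bigcup_{\tau\geqslant \tau_1}S(t,t-\tau)B_{t-\tau}}$, together with the self-absorbing property of $\hat{B}$ built into the choice of $\tau_1$, namely $S(s,s-\tau)B_{s-\tau}\subset B_s$ for all $\tau\geqslant \tau_1$ and $s\leqslant t$. Closedness of each $C_t$ is immediate from the definition. For the membership $\hat{C}\in \mathfrak{D}_{\mathcal{C}^\ast}$, evaluating the self-absorption at $s=t$ yields $\bigcup_{\tau\geqslant \tau_1}S(t,t-\tau)B_{t-\tau}\subset B_t$; since $B_t=\overline{B}^X_{r_0(t)}(0)$ is closed, taking the closure preserves the inclusion and gives $C_t\subset B_t$. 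Thus $\hat{C}\subset \hat{B}\in \mathfrak{D}_{\mathcal{C}^\ast}$, and because $\mathfrak{D}_{\mathcal{C}^\ast}$ is a universe (closed under inclusion), $\hat{C}\in \mathfrak{D}_{\mathcal{C}^\ast}$.

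For positive invariance, I would fix $(t,s)\in \mathcal{P}$ and $y\in C_s$, and approximate $y$ by a sequence $y_n=S(s,s-\tau_n)z_n$ with $\tau_n\geqslant \tau_1$ and $z_n\in B_{s-\tau_n}$. The evolution identity then rewrites
\[
S(t,s)y_n=S(t,s-\tau_n)z_n=S(t,t-\tau_n')z_n,
\]
where $\tau_n':=\tau_n+(t-s)\geqslant \tau_1$ and $z_n\in B_{s-\tau_n}=B_{t-\tau_n'}$. Consequently each $S(t,s)y_n$ belongs to the set whose closure is $C_t$, and continuity of $S(t,s)$ places the limit $S(t,s)y$ in $C_t$, so $S(t,s)C_s\subset C_t$.

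For uniform $\mathfrak{D}_{\mathcal{C}^\ast}$-pullback absorption, let $\hat{D}\in \mathfrak{D}_{\mathcal{C}^\ast}$ and $t\in \mathbb{R}$. Theorem \ref{teo.existence.paf} provides $T_0=T_0(t,\hat{D})>0$ such that $S(s,s-r)D_{s-r}\subset B_s$ for every $s\leqslant t$ and $r\geqslant T_0$. Setting $T:=T_0+\tau_1$, for $s\leqslant t$ and $r\geqslant T$ I would factor the process as $S(s,s-r)=S(s,s-\tau_1)S(s-\tau_1,s-r)$; since $s-\tau_1\leqslant t$ and $r-\tau_1\geqslant T_0$, the inner factor maps $D_{s-r}$ into $B_{s-\tau_1}$, and then the outer factor places the image into $S(s,s-\tau_1)B_{s-\tau_1}$, a term in the union defining $C_s$. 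Hence $S(s,s-r)D_{s-r}\subset C_s$.

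The main (modest) obstacle is purely bookkeeping: one must track the shifted times $s-\tau_1$ and $t-\tau_n'$ through the two-step factorisations above, ensuring $\tau_n+(t-s)\geqslant \tau_1$ in the invariance argument and $r-\tau_1\geqslant T_0$ in the absorption argument. No deeper technique is involved.
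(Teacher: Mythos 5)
Your proposal is correct and takes essentially the same route as the paper's proof: the same observation $C_t\subset B_t$ (via closedness of $B_t$ and self-absorption at $s=t$) for membership in $\mathfrak{D}_{\mathcal{C}^\ast}$, the same time-shift bookkeeping for positive invariance (your sequential argument is just the pointwise form of the paper's inclusion $S(t,s)\overline{A}\subset\overline{S(t,s)A}$ for continuous maps), and the identical factorisation $S(s,s-r)=S(s,s-\tau_1)S(s-\tau_1,s-r)$ with $r\geqslant T_0+\tau_1$ for uniform absorption.
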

\begin{proof}
Clearly $\hat{C}$ is closed, and $C_t \subset B_t$ for all $t\in\mathbb{R}$, which implies that $\hat{C}\in \mathfrak{D}_{\mathcal{C}^\ast}$. Now, let $t\geqslant s$. Then we can write $s=t-\sigma$ for some $\sigma\geqslant 0$. Note that
\begin{align*}   
S(t,s)C_s&=S(t,t-\sigma)C_{t-\sigma}=S(t,t-\sigma)\overline{\bigcup_{\tau\geqslant \tau_1}S(t-\sigma,t-\sigma-\tau)B_{t-\sigma-\tau}} \\
&\subset \overline{S(t,t-\sigma)\bigcup_{\tau\geqslant \tau_1}S(t-\sigma,t-\sigma-\tau)B_{t-\sigma-\tau}}=\overline{\bigcup_{\tau\geqslant \tau_1}S(t,t-\sigma)S(t-\sigma,t-\sigma-\tau)B_{t-\sigma-\tau}}\\
&=\overline{\bigcup_{\tau\geqslant \tau_1}S(t,t-(\sigma+\tau))B_{t-(\sigma+\tau)}} \subset \overline{\bigcup_{\tau\geqslant \sigma+\tau_1}S(t,t-\tau)B_{t-\tau}}\subset \overline{\bigcup_{\tau\geqslant \tau_1}S(t,t-\tau)B_{t-\tau}}=C_t,
\end{align*}
which prove the positive invariance of the family $\hat{C}$.

To see that $\hat{C}$ is uniformly $\mathfrak{D}_{\mathcal{C}^\ast}$-pullback absorbing, let $\hat{D}\in \mathfrak{D}_{\mathcal{C}^\ast}$. We know that given $t\in \mathbb{R}$, there exist $\tau_0, \tau_1 \geqslant 0$ such that $S(s,s-\tau)D_{s-\tau}\subset B_s$ for all $s\leqslant t$ and $\tau\geqslant \tau_0$ and $S(s,s-\tau)B_{s-\tau}\subset B_s$ for all $s\leqslant t$ and $\tau\geqslant \tau_1$. If $s\leqslant t$ and $\tau \geqslant \tau_0+\tau_1$ then $\tau-\tau_1\geqslant \tau_0$ and we obtain
\begin{align*}
S(s,s-\tau)D_{s-\tau} &= S(s,s-\tau_1)S(s-\tau_1,s-\tau)D_{s-\tau}\\
&=S(s,s-\tau_1)S(s-\tau_1,s-\tau_1-(\tau-\tau_1))D_{s-\tau_1-(\tau-\tau_1)} \subset S(s,s-\tau_1)B_{s-\tau_1}\subset C_s,
\end{align*}
and the proof is complete.
\end{proof}

\bigskip \noindent \textbf{Continuous dependence on initial data} \label{continuousdependence}\bigskip

Let $\hat{D}\in \mathfrak{D}_{\mathcal{C}^\ast}$ and $V_1,V_2\in D_s$. Consider $v, w$ the corresponding solutions for \eqref{eqondap} related respectively to these initial data. From Theorem \ref{teo.existence.paf} there exists $\Gamma\in \mathcal{C}^\ast$ such that for all $\tau\geqslant 0$ and $i=1,2$ we have
\begin{equation}\label{uhbijnx}
\|V(\tau,s,V_i)\|_X^2\leqslant \Gamma(s)e^{-\beta \varepsilon \tau} + r_0^2(s+\tau)
\end{equation}
If $V(\tau,s,V_1)=(v(\tau),v_t(\tau))$ and $V(\tau,s,V_2)=(w(\tau),w_t(\tau))$, setting $z:=v-w$, formally we obtain 
\begin{equation}\label{fghjmmm} 
z_{tt}-\Delta z+k_s(\tau)z_t + f_s(\tau,v)-f_s(\tau,w)=\int_{\Omega}K(x,y)z_t(\tau,y)dy.
\end{equation}

If $Z(\tau)=V(\tau,s,V_1)-V(\tau,s,V_2)$, then $Z(\tau)=(z(\tau),z_t(\tau))$ for each $t\geqslant 0$. Multiplying formally \eqref{fghjmmm} by $z_t$ and integrating over $\Omega$, we obtain
\begin{align*}
\frac{d}{d\tau}\|Z(\tau)\|^2_X & = 2\int_{\Omega\times\Omega} K(x,y)z_t(\tau,y)z_t(\tau,x)dydx-2\langle f_s(\tau,v)\!-\!f_s(\tau,w), z_t \rangle -2k_s(\tau) \left\| z_t\right\|_{L^2}^2\\
&\leqslant 2\int_{\Omega\times\Omega}K(x,y)z_t(\tau,y)z_t(\tau,x) dydx-2\langle f_s(\tau,v)-f_s(\tau,w), z_t \rangle.
\end{align*}
Since
\[
\int_{\Omega\times\Omega}K(x,y)z_t(\tau,y)z_t(\tau,x) dydx \leqslant K_0\|z_t\|^2_{L^2},
\]
and, by \eqref{wvtlema} and \eqref{uhbijnx}, we have
\begin{equation*}
\begin{aligned}
-\langle & f_s(\tau, v)-f_s(\tau,w), z_t \rangle \leqslant \|f_s(\tau,v)-f_s(\tau,w)\|_{L^2}\|z_t\|_{L^2}\\
&\leqslant L_0(\tau+s)(1+\|v(\tau)\|_{H^1_0}^{2}+\|w(\tau)\|_{H^1_0}^{2})\|v-w\|_{H^1_0}\|z_t\|_{L^2} \\
& \leqslant L_0(\tau+s)(1+2\Gamma(s)e^{-\beta \varepsilon \tau} + 2r_0^2(s+\tau))\|z\|_{H^1_0}\|z_t\|_{L^2}.
\end{aligned}
\end{equation*}
Therefore, for $\tau\geqslant 0$, we obtain
\begin{align*}
\frac{d}{d\tau} & \|Z(\tau)\|^2_X \leqslant 2K_0\|z_t\|^2_{L^2}+2L_0(\tau+s)(1+2\Gamma(s)e^{-\beta \varepsilon \tau} + 2r_0^2(s+\tau))\|z\|_{H^1_0}\|z_t\|_{L^2}\\
& \stackrel{(1)}{\leqslant} 2K_0\|z_t\|^2_{L^2}+L_0(\tau+s)(1+2\Gamma(s)e^{-\beta \varepsilon \tau} + 2r_0^2(s+\tau))(\|z\|^2_{H^1_0}+\|z_t\|^2_{L^2}) \\
& \leqslant (2K_0+L_0(\tau+s)(1+2\Gamma(s)e^{-\beta \varepsilon \tau} + 2r_0^2(s+\tau))\|Z(\tau)\|^2_X,
\end{align*}
where in (1) we use again the usual Young's inequality. Since $L_0$, $\Gamma$ and $r_0$ are continuous, for each $\gamma>0$ we have
\[
c := \sup_{\tau\in [0,\gamma]} \Big(2K_0 + L_0(\tau+s)(1+2\Gamma(s)e^{-\beta \varepsilon \tau}+2r_0^2(s+\tau)\Big)<\infty,
\]
which implies that
\[
\frac{d}{d\tau}\|Z(\tau)\|_X^2 \leqslant c \|Z(\tau)\|_X^2.
\]
Applying Gronwall's inequality, we obtain
\begin{equation}\label{eq.Lip}
\|Z(\tau)\|_X \leqslant \|Z(0)\|_X e^{\frac{c}{2}\tau},
\end{equation}
for each $\tau\in [0,\gamma]$. Therefore, we have the following result:

\begin{theorem}\label{Lipschitz}
For each $\hat{D}\in \mathfrak{D}_{\mathcal{C}^\ast}$, $s\in \mathbb{R}$ and $\gamma>0$, there exists $L>0$ such that for all $V_1,V_2\in D_s$ and $0\leqslant \tau\leqslant \gamma$ we have
\[
\|S(\tau+s,s)V_1-S(\tau+s,s)V_2\|_X \leqslant L \|V_1-V_2\|_X.
\]
\end{theorem}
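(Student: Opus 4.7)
The plan is to collect the estimate that has already been derived in the paragraphs immediately preceding the theorem and cast it in the desired form. By the definition of the evolution process, $S(\tau+s,s)V_i = V(\tau,s,V_i)$ for $i=1,2$, so with $Z(\tau) := V(\tau,s,V_1) - V(\tau,s,V_2)$ we have $\|S(\tau+s,s)V_1 - S(\tau+s,s)V_2\|_X = \|Z(\tau)\|_X$. Hence it suffices to bound $\|Z(\tau)\|_X$ linearly in $\|Z(0)\|_X = \|V_1-V_2\|_X$.

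The hard work has already been done: multiplying the difference equation \eqref{fghjmmm} by $z_t$, integrating over $\Omega$, using hypothesis \ref{cond3} to discard the $k_s(\tau)$ term, estimating the kernel contribution via \ref{cond2}, and applying \eqref{wvtlema} together with the a priori bound \eqref{uhbijnx} from Theorem \ref{teo.existence.paf}, one arrives at the differential inequality
\[
\frac{d}{d\tau}\|Z(\tau)\|_X^2 \leqslant \Big(2K_0 + L_0(\tau+s)\bigl(1+2\Gamma(s)e^{-\beta\varepsilon\tau} + 2r_0^2(s+\tau)\bigr)\Big)\|Z(\tau)\|_X^2.
\]
Since $L_0$, $\Gamma$ and $r_0$ are continuous, the coefficient inside the parenthesis is continuous on $[0,\gamma]$, hence bounded there by a finite constant $c=c(s,\gamma,\hat{D})$. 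Gronwall's inequality then yields \eqref{eq.Lip}, namely $\|Z(\tau)\|_X \leqslant \|Z(0)\|_X e^{c\tau/2}$ for $\tau \in [0,\gamma]$.

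To finish, I simply set $L := e^{c\gamma/2}$, which depends on $\hat{D}$, $s$, and $\gamma$ but not on the choice of $V_1,V_2 \in D_s$, and obtain
\[
\|S(\tau+s,s)V_1 - S(\tau+s,s)V_2\|_X = \|Z(\tau)\|_X \leqslant e^{c\tau/2}\|V_1-V_2\|_X \leqslant L\|V_1-V_2\|_X,
\]
for all $\tau \in [0,\gamma]$. There is no real obstacle here, since the difficult estimate involving the superlinear nonlinearity $f_s$ has already been absorbed through \eqref{wvtlema} combined with the dissipativity bound \eqref{uhbijnx}; the only subtle point is that the $\mathfrak{D}_{\mathcal{C}^\ast}$ a priori bound is used to control $\|v(\tau)\|_{H^1_0}^2 + \|w(\tau)\|_{H^1_0}^2$ on $[0,\gamma]$, which is precisely why the universe $\mathfrak{D}_{\mathcal{C}^\ast}$ (rather than a fixed bounded set) enters into the constant $L$.
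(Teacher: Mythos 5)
Your proposal is correct and follows exactly the paper's own argument: the paper derives the same differential inequality for $\|Z(\tau)\|_X^2$ (via \eqref{fghjmmm}, \eqref{wvtlema}, and the absorption bound \eqref{uhbijnx}), bounds the continuous coefficient on $[0,\gamma]$, applies Gronwall to obtain \eqref{eq.Lip}, and takes $L = e^{c\gamma/2}$. Nothing is missing.
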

\begin{proof}
    It follows by noting that $Z(\tau)=S(\tau+s,s)V_1-S(\tau+s,s,V_2)$, $Z(0)=V_1-V_2$, and taking $L = e^{\frac{c}{2}\gamma}$ in \eqref{eq.Lip}.
\end{proof}

\bigskip \noindent \textbf{The generalized exponential pullback attractor: the proof of Theorem \ref{App:Att}} \medskip

Consider the family $\hat{C}$ given by \eqref{eq.def.C}. We already know that there exists $\Gamma \in \mathfrak{D}_{\mathcal{C}^\ast}$ such that for each $s\in \mathbb{R}$, $t\geqslant 0$ and $V_0\in C_s$, we have 
\[
\|V(t,s,V_0)\|_X^2 \leqslant \Gamma(s)e^{-\beta \varepsilon t} + r_0^2(t+s) \leqslant \Gamma(s) + r_0^2(t+s).
\]
If $T>0$ is fixed, setting
\[
\Xi_T(s):= \left\{\Gamma(s)+ \sup_{t\in [0,T]}r_0^2(t+s)\right\}^\frac12,
\]
if follows from Lemma \ref{supinC*} that $\Xi_T\in \mathcal{C}^\ast$. Thus, for each $s\in \mathbb{R}$, $V_0\in C_s$ and $t\in [0,T]$, we have
\[
\|V(t,s,V_0)\|_X \leqslant \Xi_T(s).
\]
If $V_1,V_2\in C_s$, $V(t,s,V_1)=(v(t),v_t(t))$ and $V(t,s,V_2)=(w(t),w_t(t))$, setting again $Z(t)=(z(t),z_t(t))=V(t,s,V_1)-V(t,s,V_2)$ we have $z=v-w$ and  we formally obtain from \eqref{eqondap} that
\begin{equation}\label{formallyztt}
z_{tt}-\Delta z+k_s(t)z_t+f_s(t,v)-f_s(t,w)=\int_{\Omega}K(x,y)z_t(t,y)dy.
\end{equation}

Our next goal is to use Theorem \ref{corMain2} to prove that the process $S$ associated with \eqref{ourproblem} possesses a generalized exponential pullback attractor. Note that for $s\in\mathbb{R}$ and $T>0$
\begin{equation}\label{dstmaiss2}
 \|S(T+s,s)V_1-S(T+s,s)V_2\|_X^2=\|Z(T)\|_X^2.
\end{equation}
Define $\mathcal{E}_s(\cdot)\colon \mathbb{R}^+ \to \mathbb{R}^+$ by
\[
\mathcal{E}_s(t)=\tfrac12\|Z(t)\|^2_X = \tfrac12(\|z\|^2_{H^1_0}+\|z_t\|^2_{L^2}),
\]
where $Z(t,Z_0)=(z(t),z_t(t))$ for $t\geqslant 0$. In order to obtain a suitable estimate for $\mathcal{E}_s$, we use the auxiliary function $\mathcal{F}_s$ defined by
\[
\mathcal{F}_s(t)=\mathcal{E}_s(t)+\tfrac{\varepsilon_0}{2}\langle z, z_t\rangle,
\]
where $\varepsilon_0:=\min\left\{\sqrt{\lambda_1}, k_0\right\}>0$. For $t\geqslant 0$ it is clear that
\begin{equation}\label{lema2000}
\tfrac{1}{2}\mathcal{E}_s(t)\leqslant \mathcal{F}_s(t)\leqslant \tfrac{3}{2}\mathcal{E}_s(t).
\end{equation}

Formally multiplying \eqref{formallyztt} by $z_t+\frac{\varepsilon_0}{2}z$ in $L^2(\Omega)$, we obtain
\begin{equation}\label{ecuacion367}
\begin{aligned}
    \frac{d}{dt}&\mathcal{F}_s(t)+\varepsilon_0\mathcal{F}_s(t)=-k_s(t)\|z_t\|^2_{L^2}+\varepsilon_0\|z_t\|^2_{L^2}+\frac{\varepsilon_0^2}{2}\langle z,z_t \rangle - \frac{\varepsilon_0}{2}k_s(t)\langle z, z_t\rangle\\
    &\quad-\langle f_s(t,v)-f_s(t,w), z_t\rangle-\frac{\varepsilon_0}{2}\langle f_s(t,v)-f_s(t,w), z \rangle\\
    &\quad+ \int_{\Omega\times \Omega}K(x,y)z_t(t,y)z_t(t,x)dydx+\frac{\varepsilon_0}{2}\int_{\Omega\times \Omega}K(x,y)z_t(t,y)z(t,x)dydx.  
\end{aligned}
\end{equation}

Now observe that for $t\in [0,T]$ we have
 \[ 
-k_s(t)\|z_t\|^2_{L^2} +\varepsilon_0\left\|z_t\right\|^2_{L^2} \leqslant 0,
\]
\[
\frac{\varepsilon_0^2}{2}\langle z,z_t \rangle \leqslant \frac{\varepsilon_0^2}{2}\left\|z_t\right\|_{L^2}\|z\|_{L^2}\leqslant \varepsilon_0^2\Xi_T(s)\|z\|_{L^2},
\]
\[
- \frac{\varepsilon_0}{2}k_s(t)\langle z, z_t\rangle \leqslant \varepsilon_0k_1\Xi_T(s)\|z\|_{L^2},
\]
\[ 
-\frac{\varepsilon_0}{2}\langle f_s(t,v)-f_s(t,w), z \rangle \leqslant \varepsilon_0L_0(t+s)\Xi_T(s)(1+2\Xi_T^2(s))\|z\|_{L^2} \leqslant \varepsilon_0 \Big(\sup_{t\in [0,T]}L_0(t+s)\Big)\Xi_T(s)(1+2\Xi_T^2(s))\|z\|_{L^2},
\]
\[
\int_{\Omega\times \Omega}K(x,y)z_t(t,y)z_t(t,x)dydx \leqslant 2\Xi_T(s)\left\|\int_{\Omega}K(x,y)z_t(t,y)dy\right\|_{L^2},
\]
\[
\frac{\varepsilon_0}{2}\int_{\Omega\times \Omega}K(x,y)z_t(t,y)z(t,x)dydx \leqslant \varepsilon_0K_0\Xi_T(s)\|z\|_{L^2}.
\]

Taking 
\begin{align*}
\varphi_T(s)= \Xi_T(s)\big[2+K_0\varepsilon_0+\epsilon_0^2+k_1\varepsilon_0+\varepsilon_0\sup_{t\in [0,T]}L_0(t+s)(1+2\Xi_T^2(s))\big],
\end{align*}
we can see that $\varphi_T\in \mathcal{C}^\ast$ and inserting the previous estimates into \eqref{ecuacion367}, for $t\in [0,T]$ we conclude that
\begin{align*}
\frac{d}{dt}\mathcal{F}_s(t) + \varepsilon_0\mathcal{F}_s(t)\leqslant \varphi_T(s)\|z\|_{L^2}-\langle f_s(t,v)-f_s(t,w), z_t \rangle + \varphi_T(s)\left\|\int_{\Omega}K(x,y)z_t(t,y)dy\right\|_{L^2}.
\end{align*}

\begin{lemma}\label{lema3000}
    Let $T>0$ fixed. Then,
    \begin{align*}
        \mathcal{E}_s(T)&\leqslant 3e^{-\varepsilon_0T}\mathcal{E}_s(0)+2\varphi_T(s)T\sup_{t\in [0,T]}\left\|z(t)\right\|_{L^2}+2\varphi_T(s)\int_0^T\left\|\int_{\Omega}K(x,y)z_t(t,y)dy\right\|_{L^2}dt\\
        &\quad+2\left|\int_0^T e^{\varepsilon_0(t-T)}\langle f_s(t,v)-f_s(t,w), z_t(t) \rangle dt\right|.
    \end{align*}
\end{lemma}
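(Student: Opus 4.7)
The plan is straightforward: apply the integrating factor $e^{\varepsilon_0 t}$ to the differential inequality for $\mathcal{F}_s$ that was just derived, integrate from $0$ to $T$, estimate the resulting integrals from above, and finally translate back from $\mathcal{F}_s$ to $\mathcal{E}_s$ using the two-sided bound in \eqref{lema2000}.

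\medskip

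More precisely, starting from the inequality
\[
\frac{d}{dt}\mathcal{F}_s(t) + \varepsilon_0\mathcal{F}_s(t)\leqslant \varphi_T(s)\|z(t)\|_{L^2}-\langle f_s(t,v)-f_s(t,w), z_t(t) \rangle + \varphi_T(s)\Big\|\int_{\Omega}K(x,y)z_t(t,y)dy\Big\|_{L^2},
\]
valid for $t\in [0,T]$, I multiply both sides by $e^{\varepsilon_0 t}$ to recognize the left-hand side as $\tfrac{d}{dt}(e^{\varepsilon_0 t}\mathcal{F}_s(t))$, and then integrate over $[0,T]$. After dividing through by $e^{\varepsilon_0 T}$, this gives
\[
\mathcal{F}_s(T)\leqslant e^{-\varepsilon_0 T}\mathcal{F}_s(0)+\int_0^T e^{\varepsilon_0(t-T)}\varphi_T(s)\|z(t)\|_{L^2}\,dt+\int_0^T e^{\varepsilon_0(t-T)}\varphi_T(s)\Big\|\int_{\Omega}K(x,y)z_t(t,y)dy\Big\|_{L^2}dt+\Big|\int_0^T e^{\varepsilon_0(t-T)}\langle f_s(t,v)-f_s(t,w), z_t(t) \rangle dt\Big|.
\]

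\medskip

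Next, for $t\in [0,T]$ we have $e^{\varepsilon_0(t-T)}\leqslant 1$, so the first integral is bounded above by $\varphi_T(s)\int_0^T \|z(t)\|_{L^2}\,dt\leqslant \varphi_T(s)\,T\sup_{t\in [0,T]}\|z(t)\|_{L^2}$, while the second integral is bounded above by $\varphi_T(s)\int_0^T \|\int_\Omega K(x,y)z_t(t,y)dy\|_{L^2}dt$. The term involving $f_s$ is left in its original form with the weight $e^{\varepsilon_0(t-T)}$ preserved (so that this estimate can later be exploited through integration by parts against $z_t$, as is typically done for the nonlinear term).

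\medskip

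Finally, applying \eqref{lema2000}, namely $\tfrac{1}{2}\mathcal{E}_s(T)\leqslant \mathcal{F}_s(T)$ and $\mathcal{F}_s(0)\leqslant \tfrac{3}{2}\mathcal{E}_s(0)$, and multiplying the resulting inequality by $2$, yields precisely
\[
\mathcal{E}_s(T)\leqslant 3e^{-\varepsilon_0 T}\mathcal{E}_s(0)+2\varphi_T(s)\,T\sup_{t\in [0,T]}\|z(t)\|_{L^2}+2\varphi_T(s)\int_0^T\Big\|\int_{\Omega}K(x,y)z_t(t,y)dy\Big\|_{L^2}dt+2\Big|\int_0^T e^{\varepsilon_0(t-T)}\langle f_s(t,v)-f_s(t,w), z_t(t) \rangle dt\Big|,
\]
as desired. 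There is no real obstacle here: the lemma is essentially a packaging of a Gronwall-type argument, and the only subtlety is to keep the exponential weight $e^{\varepsilon_0(t-T)}$ inside the nonlinear $f_s$-term (rather than bounding it by $1$), since that weighted integral is what will be handled later by a contractive-function argument when applying Theorem \ref{corMain2}.
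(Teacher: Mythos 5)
Your proof is correct and follows essentially the same route as the paper's: multiply by the integrating factor $e^{\varepsilon_0 t}$, integrate over $[0,T]$, divide by $e^{\varepsilon_0 T}$, bound the weight $e^{\varepsilon_0(t-T)}\leqslant 1$ in the two $\varphi_T$-terms while keeping it in the nonlinear term, and pass from $\mathcal{F}_s$ to $\mathcal{E}_s$ via \eqref{lema2000}. The only cosmetic difference is that the paper applies the bound $e^{\varepsilon_0 t}\leqslant e^{\varepsilon_0 T}$ before dividing by $e^{\varepsilon_0 T}$, which is the same estimate performed at a different stage.
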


\begin{proof}
    For every $t\geqslant 0$ we have
    \begin{align*}
        \frac{d}{dt}(e^{\varepsilon_0 t}\mathcal{F}_s(t))&=e^{\varepsilon_0 t}\left(\frac{d}{dt}\mathcal{F}_s(t)+\varepsilon_0\mathcal{F}_s(t)\right)\\
        &\leqslant e^{\varepsilon_0 t}\varphi_T(s)\left\|z\right\|_{L^2}-e^{\varepsilon_0 t}\langle f_s(t,v)-f_s(t,w), z_t \rangle + e^{\varepsilon_0 t}\varphi_T(s)\left\|\int_{\Omega}K(x,y)z_t(t,y)dy\right\|_{L^2}.
    \end{align*}
Now, an integration from $0$ to $T$ provides
\begin{align*}
    e^{\varepsilon_0 T}\mathcal{F}_s(T)-\mathcal{F}_s(0) &\leqslant e^{\varepsilon_0 T}\varphi_T(s)\int_0^T\left\|z(t)\right\|_{L^2}dt+e^{\varepsilon_0 T}\varphi_T(s)\int_0^T\left\|\int_{\Omega}K(x,y)z_t(t,y)dy\right\|_{L^2}dt\\
    &\quad+\left|\int_0^T e^{\varepsilon_0 t}\langle f_s(t,v)-f_s(t,w),z_t(t) \rangle dt\right|,
\end{align*}
which implies
\begin{align*}
    \mathcal{F}_s(T)&\leqslant e^{-\varepsilon_0 T}\mathcal{F}_s(0) +\varphi_T(s)\int_0^T\left\|z(t)\right\|_{L^2}dt+\varphi_T(s)\int_0^T\left\|\int_{\Omega}K(x,y)z_t(t,y)dy\right\|_{L^2}dt\\
    &\quad+\left|\int_0^T e^{\varepsilon_0 (t-T)}\langle f_s(t,v)-f_s(t,w),z_t(t) \rangle dt\right|,
\end{align*}
and the proof is done if we apply \eqref{lema2000} to the previous inequality and consider that $\int_0^T\|z(t)\|_{L^2}dt\leqslant T\sup_{t\in [0,T]}\|z(t)\|_{L^2}$.
\end{proof}

From \eqref{dstmaiss2} and Lemma \ref{lema3000}, we conclude that for $t\in \mathbb{R}$, $n\in \mathbb{N}$ and $T>0$, writing $S_n = S(t-(n-1)T,t-nT)$, we have,
\begin{align*}
d&(S_nV_1, S_nV_2)^2 \leqslant 3e^{-\varepsilon_0T}d(V_1,V_2)^2+4T\varphi_T(t-nT)\sup_{t\in [0,T]}\|z(t)\|_{L^2}\\ 
&\quad+4\varphi_T(t-nT)\int_0^T\left\|\int_{\Omega}K(x,y)z_t(t,y)dy\right\|_{L^2}dt+4\left|\int_0^T e^{\varepsilon_0(t-T)}\langle f_{t-nT}(t,v)-f_{t-nT}(t,w), z_t(t) \rangle dt\right|.
\end{align*}
Note that $\mu:= 3e^{-\varepsilon_0 T}\in (0,1)$ if we take $T>\varepsilon_0^{-1}\ln 3$. For instance, taking $T=1+\varepsilon_0^{-1}\ln 3$, we have
\[
\mu=3e^{-\varepsilon_0(\varepsilon_0^{-1}ln 3 + 1)}=3e^{-ln3}e^{-\varepsilon_0}=e^{-\min\left\{\sqrt{\lambda_1},k_0\right\}}.
\]

Consider the functions $g_n\colon \mathbb{R}^+\times \mathbb{R}^+\to \mathbb{R}$ given by $g_n(\alpha, \beta)=4\varphi_T(t-nT)\alpha+4T\varphi_T(t-nT)\beta$. It is clear that each $g_n$ is non-decreasing with respect to each variable, $g_n(0,0)=0$ and $g_n$ is continuous at $(0,0)$. Additionally, define the maps $\rho_1,\rho_2, \psi_n\colon X\times X\to \mathbb{R}^+$ by
\begin{align*}    &\rho_1(V_1,V_2)=\int_0^T\left\|\int_{\Omega}K(x,y)z_t(\tau,y)dy\right\|_{L^2}d\tau,\\
&\rho_2(V_1,V_2)=\sup_{\tau\in [0,T]}\|z(\tau)\|_{L^2},\\
&\psi_n(V_1,V_2)=4\left|\int_0^T e^{\varepsilon_0(\tau-T)}\langle f_{t-nT}(\tau,v)-f_{t-nT}(\tau,w), z_t(\tau) \rangle dt\right|,
\end{align*}
where $(v(\tau),v_t(\tau))=V(\tau,t-nT,V_1)$, $(w(t),w_t(t))=V(\tau,t-nT,V_2)$ and $(z(\tau),z_t(\tau))=Z(\tau)=V(\tau,t-nT,V_1)-V(\tau,t-nT,V_2)$.

We already know that the evolution process $S$ associated with \eqref{ourproblem} satisfies the Lipschitz condition required in Theorem \ref{corMain2} (see Theorem \ref{Lipschitz}). We need to verify that $\psi_n\in \operatorname{contr}(C_{t-nT})$, and also that $\rho_1,\rho_2$ are precompact in $C_{t-nT}$ for each $t\in \mathbb{R}$ and $n\in \mathbb{N}$. 

We begin by proving that $\psi_n\in \operatorname{contr}(C_{t-nT})$. Let $\left\{V_k\right\}_{k\in\mathbb{N}}\subset C_{t-nT}$ and denote $V(t,t-nT,V_k)=(v^{(k)}(t),v_t^{(k)}(t))$. As in \cite{Pecorari}, we have the following result:

\begin{lemma}\label{convhs}
For a fixed $\gamma\in (0,1)$, up to a subsequence, we have
\begin{equation*}
\left\{\begin{aligned}
    &(v^{(k)},v_t^{(k)})\overset{\ast}{\rightharpoonup} (v,v_t) \hbox{ in }L^{\infty}(0,T; X),\\
    & v^{(k)}\rightarrow v \hbox{ in } C([0,T]; H^\gamma(\Omega)).
    \end{aligned}\right.
\end{equation*}

Also for each $\ell\in [0,T]$ and $\psi \in L^1(0, T; L^2(\Omega))$ it holds that
   $$\int_\ell^T \langle f_{t-nT}(\tau, w^{(k)}(\tau))-f_{t-nT}(\tau, w(\tau)), \psi(\tau)\rangle d\tau \rightarrow 0$$
as $k\rightarrow \infty$, and
\[
f_{t-nT}(\tau, v^{(k)}(\tau))\stackrel{\ast}{\rightharpoonup} f_{t-nT}(\tau, v(\tau)) \hbox{ in } L^{\infty}(t, T; L^2(\Omega)) \hbox{ as } k\to \infty.
\]
\end{lemma}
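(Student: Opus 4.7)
The plan is to derive all four convergences from uniform energy bounds together with standard compactness tools: Banach--Alaoglu for the weak-$\ast$ convergence, the Aubin--Lions / Arzel\`a--Ascoli pairing for the strong $H^\gamma$ convergence, and the standard identification of weak limits of continuous nonlinearities under pointwise a.e.\ convergence.

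First I would establish uniform bounds. Since $V_k\in C_{t-nT}\subset B_{t-nT}$, the energy estimate leading to Theorem \ref{teo.existence.paf}, applied on the window $[0,T]$, supplies a constant $\Xi_T(t-nT)$ such that $\|V(\tau,t-nT,V_k)\|_X\leqslant \Xi_T(t-nT)$ for every $\tau\in[0,T]$ and $k\in\mathbb{N}$. Consequently $\{v^{(k)}\}$ is bounded in $L^\infty(0,T;H^1_0(\Omega))$ and $\{v_t^{(k)}\}$ is bounded in $L^\infty(0,T;L^2(\Omega))$. Invoking Banach--Alaoglu in the separable predual $L^1(0,T;X^\ast)$ and extracting a subsequence yields $(v^{(k)},v_t^{(k)})\overset{\ast}{\rightharpoonup}(v,v_t)$ in $L^\infty(0,T;X)$.

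Next I would upgrade to strong convergence $v^{(k)}\to v$ in $C([0,T];H^\gamma(\Omega))$ for a fixed $\gamma\in(0,1)$. The uniform bound on $v_t^{(k)}$ makes each $v^{(k)}$ uniformly Lipschitz from $[0,T]$ into $L^2(\Omega)$; via the interpolation $\|u\|_{H^\gamma}\leqslant C\|u\|_{L^2}^{1-\gamma}\|u\|_{H^1_0}^\gamma$ and the uniform $H^1_0$-bound, this equicontinuity transfers to $H^\gamma(\Omega)$. Because $H^1_0(\Omega)\hookrightarrow\hookrightarrow H^\gamma(\Omega)$ for $\gamma<1$, each set $\{v^{(k)}(\tau)\}_k$ is relatively compact in $H^\gamma$, so Arzel\`a--Ascoli produces a subsequence converging in $C([0,T];H^\gamma(\Omega))$, whose limit must coincide with $v$ by uniqueness of the weak-$\ast$ limit.

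Finally, for the nonlinear assertions, the growth bound $|f(t,v)|\leqslant 2c_0(t)(1+|v|^3)$ together with $H^1_0(\Omega)\hookrightarrow L^6(\Omega)$ yields $\|f_{t-nT}(\tau,v^{(k)}(\tau))\|_{L^2}\leqslant C(1+\|v^{(k)}(\tau)\|_{H^1_0}^3)$, uniform in $\tau$ and $k$, so $\{f_{t-nT}(\cdot,v^{(k)}(\cdot))\}$ is bounded in $L^\infty(0,T;L^2(\Omega))$. Passing to a further subsequence, the strong convergence in $C([0,T];H^\gamma)$ yields $v^{(k)}\to v$ pointwise a.e.\ in $[0,T]\times\Omega$; continuity of $f$ then gives $f_{t-nT}(\tau,v^{(k)})\to f_{t-nT}(\tau,v)$ a.e.; the standard Lions identification (bounded in $L^\infty(0,T;L^2)$ plus a.e.\ convergence) forces the weak-$\ast$ limit to be $f_{t-nT}(\cdot,v)$, giving the fourth item. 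Testing this weak-$\ast$ convergence against $\psi\cdot\chi_{[\ell,T]}\in L^1(0,T;L^2(\Omega))$ yields the third item. The main obstacle is the second step: converting uniform $H^1_0$ and $L^2$ bounds into strong $C([0,T];H^\gamma)$ convergence requires clean execution of the Aubin--Lions / Arzel\`a--Ascoli pairing, after which identifying the nonlinear limit is a routine application of Lions' lemma.
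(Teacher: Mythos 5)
The paper does not actually prove this lemma---it is quoted from the companion work \cite{Pecorari} (``As in \cite{Pecorari}, we have the following result'')---so your argument can only be judged on its own merits, and on those merits it is sound and follows the route such proofs standardly take: uniform energy bounds on $[0,T]$ from the absorbing-family estimates, Banach--Alaoglu in $L^\infty(0,T;X)$, an interpolation/Arzel\`a--Ascoli argument for the strong $C([0,T];H^\gamma(\Omega))$ convergence, and identification of the nonlinear weak-$\ast$ limit from a.e.\ convergence.

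Two steps deserve to be made explicit rather than left implicit. First, writing the weak-$\ast$ limit as $(v,v_t)$ presumes that the weak-$\ast$ limit of $\{v_t^{(k)}\}$ is the distributional time derivative of the weak-$\ast$ limit of $\{v^{(k)}\}$; this is a one-line computation against test functions $\phi\in C_c^\infty((0,T)\times\Omega)$, but it is a genuine identification and should be recorded. Second, the ``standard Lions identification'' you invoke is usually stated for sequences bounded in $L^q(Q)$, $1<q<\infty$, with $Q=(0,T)\times\Omega$, not for $L^\infty(0,T;L^2(\Omega))$. The fix is routine but should be spelled out: your sequence $f_{t-nT}(\cdot,v^{(k)})$ is also bounded in $L^2(Q)$, so Lions' lemma gives $f_{t-nT}(\cdot,v^{(k)})\rightharpoonup f_{t-nT}(\cdot,v)$ weakly in $L^2(Q)$; since any weak-$\ast$ cluster point in $L^\infty(0,T;L^2(\Omega))$ must coincide with this $L^2(Q)$-weak limit (test against $L^2(Q)\subset L^1(0,T;L^2(\Omega))$), the whole sequence converges weak-$\ast$ to the correct limit. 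With these two insertions the proof is complete; the third assertion of the lemma (where $w^{(k)},w$ are evidently typographical for $v^{(k)},v$) then follows, exactly as you say, by testing the weak-$\ast$ convergence against $\chi_{[\ell,T]}\psi\in L^1(0,T;L^2(\Omega))$.
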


Now, we have:

\begin{lemma}\label{proppart1e}
For $k\in \mathbb{N}$ we have
\[
\int_0^Te^{\varepsilon_0(\tau-T)}\langle f_{t-nT}(\tau, v^{(k)}(\tau)),v_t^{(m)}(\tau)\rangle d\tau \stackrel{m\to \infty}{\longrightarrow} \int_0^T e^{\varepsilon_0(\tau-T)} \langle  f_{t-nT}(\tau, v^{(k)}(\tau)),v_t(\tau)\rangle d\tau,
\]
and
\[
\int_0^T e^{\varepsilon_0(\tau-T)}\langle f_{t-nT}(\tau, v^{(m)}(\tau)), v_t^{(k)}(\tau) \rangle d\tau \stackrel{m\to \infty}{\longrightarrow} \int_0^T e^{\varepsilon_0(\tau-T)}\langle f_{t-nT}(\tau, v(\tau)),v_t^{(k)}(\tau)\rangle d\tau.
\]
\end{lemma}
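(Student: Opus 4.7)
The plan is to deduce both limits from the convergence results already gathered in Lemma \ref{convhs}, by choosing appropriate test functions and verifying the necessary integrability conditions.

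For the first limit, the natural approach is to use the weak-$\ast$ convergence $v_t^{(m)}\overset{\ast}{\rightharpoonup} v_t$ in $L^\infty(0,T;L^2(\Omega))$ (which is contained in the convergence $(v^{(m)},v_t^{(m)})\overset{\ast}{\rightharpoonup}(v,v_t)$ in $L^\infty(0,T;X)$). To apply it, I need the map $\tau\mapsto e^{\varepsilon_0(\tau-T)}f_{t-nT}(\tau,v^{(k)}(\tau))$ to lie in $L^1(0,T;L^2(\Omega))$. Since the exponential factor is bounded on $[0,T]$, it suffices to check that $\tau\mapsto f_{t-nT}(\tau,v^{(k)}(\tau))$ is, say, essentially bounded into $L^2(\Omega)$. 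This follows from $v^{(k)}\in C([0,T];H^1_0(\Omega))\hookrightarrow C([0,T];L^6(\Omega))$ together with the growth estimate $|f(\sigma,v)|\leqslant 2c_0(\sigma)(1+|v|^3)$: one has $\|f_{t-nT}(\tau,v^{(k)}(\tau))\|_{L^2}\leqslant 2c_0(\tau+t-nT)(|\Omega|^{1/2}+\|v^{(k)}(\tau)\|_{L^6}^3)$, which is uniformly bounded in $\tau\in[0,T]$ by the continuity of $c_0$ and of $v^{(k)}$ into $L^6(\Omega)$. Hence the test function lies in $L^1(0,T;L^2(\Omega))$ and the weak-$\ast$ convergence of $v_t^{(m)}$ delivers the first identity.

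For the second limit, I would directly invoke the convergence from Lemma \ref{convhs} that states, for each $\ell\in[0,T]$ and each $\psi\in L^1(0,T;L^2(\Omega))$,
\[
\int_\ell^T \langle f_{t-nT}(\tau,v^{(m)}(\tau))-f_{t-nT}(\tau,v(\tau)),\psi(\tau)\rangle\,d\tau\xrightarrow{m\to\infty} 0.
\]
Choosing $\ell=0$ and $\psi(\tau):=e^{\varepsilon_0(\tau-T)}v_t^{(k)}(\tau)$, it only remains to verify $\psi\in L^1(0,T;L^2(\Omega))$. But $v_t^{(k)}\in L^\infty(0,T;L^2(\Omega))$ (again from $V(\cdot,t-nT,V_k)\in C([0,T];X)$), and multiplication by the bounded factor $e^{\varepsilon_0(\tau-T)}$ preserves this, so $\psi\in L^\infty(0,T;L^2(\Omega))\subset L^1(0,T;L^2(\Omega))$, and the second limit follows.

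There is no substantial obstacle; the only care needed is to check the integrability of each test function, and this reduces to the cubic growth of $f$ together with the embedding $H^1_0(\Omega)\hookrightarrow L^6(\Omega)$ in the first case, and to the $L^\infty$-in-time control of $v_t^{(k)}$ in the second case. Both ingredients are already in force because $V_k\in C_{t-nT}$ and the corresponding solution is bounded on $[0,T]$ in $X$.
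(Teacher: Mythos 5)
Your proposal is correct and follows essentially the same route as the paper: both limits are obtained by testing the convergences of Lemma \ref{convhs} against suitable test functions in $L^1(0,T;L^2(\Omega))$, with the integrability checks reducing to the boundedness of the solutions on $[0,T]$ (your use of the $\ell$-integral statement of Lemma \ref{convhs} for the second limit is equivalent, at $\ell=0$, to the weak-$\ast$ convergence $f_{t-nT}(\cdot,v^{(m)}(\cdot))\stackrel{\ast}{\rightharpoonup} f_{t-nT}(\cdot,v(\cdot))$ in $L^\infty(0,T;L^2(\Omega))$ that the paper invokes). Your explicit verification of the $L^1$ bound via the cubic growth of $f$ and the embedding $H^1_0(\Omega)\hookrightarrow L^6(\Omega)$ is simply a more detailed version of the paper's appeal to a bound $T\delta(t-nT)$ with $\delta\in\mathcal{C}^\ast$.
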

\begin{proof}
For the first item, note that defining $\xi\colon [0,T]\to L^2(\Omega)$ by $\xi(\tau)=e^{\varepsilon_0(\tau-T)}f_{t-nT}(\tau, v^{(k)}(\tau))$ for each $\tau \in [0,T]$, we have $\xi \in L^1(0, T; L^2(\Omega))$, since
\begin{align*}
\int_0^T \|\xi(\tau)\|_{L^2} d\tau &= \int_0^T \left\|e^{\varepsilon_0(\tau-T)}f_{t-nT}(\tau,v^{(k)}(\tau))\right\|_{L^2} d\tau \\
&= \int_0^T \underbrace{\left|e^{\varepsilon_0(\tau-T)}\right|}_{\leqslant 1}\left\|f_{t-nT}(\tau,v^{(k)}(\tau))\right\|_{L^2} d\tau \leqslant T\delta(t-nT)<\infty,
\end{align*}
for some $\delta\in \mathcal{C}^\ast$. Since, by Lemma \ref{convhs}, $v_t^{(m)} \stackrel{\ast}{\rightharpoonup} v_t$ in $L^{\infty}(0, T; L^2(\Omega))$ as $m\to \infty$ we have 
\[
\int_0^T\langle \xi(\tau), v^{(m)}_t(\tau)-v_t(\tau) \rangle d\tau \stackrel{m\to \infty}{\longrightarrow} 0.
\]

For the second item, define $\xi\colon [0,T]\to L^2(\Omega)$ by $\xi(\tau) = e^{\varepsilon_0(\tau-T)}v_t^{(k)}(\tau)$ for each $\tau\in [0,T]$. Clearly $\xi\in L^1(0,T;L^2(\Omega))$, since for some $\delta\in \mathcal{C}^\ast$ we have
\[
\int_0^T \|\xi(\tau)\|_{L^2}d\tau = \int_0^t \|e^{\varepsilon_0(\tau-T)}v_t^{(k)}(\tau)\|_{L^2}d\tau = \int_0^t \underbrace{|e^{\varepsilon_0(\tau-T)}|}_{\leqslant 1}\|v_t^{(k)}(\tau)\|_{L^2}d\tau \leqslant T\delta(t-nT)<\infty,
\]
and hence, since $f_{t-nT}(\tau,v^{(m)}(\tau))\stackrel{\ast}{\rightharpoonup} f_{t-nT}(\tau,v(\tau))$ in $L^\infty(0,T;L^2(\Omega))$ as $m\to \infty$ by Lemma \ref{convhs}, we have
\[
\int_0^T \langle f_{t-nT}(\tau,v^{(m)}(\tau))-f_{t-nT}(\tau,v(\tau)),\xi(\tau)\rangle d\tau \stackrel{m\to \infty}{\longrightarrow} 0,
\]
and the result is proven.
\end{proof}

Analogously, we can show the following:

\begin{lemma}\label{proppart2e}
\[
\int_0^T e^{\varepsilon_0(\tau-T)} \langle f_{t-nT}(\tau, v^{(k)}(\tau)),v_t(\tau)\rangle d\tau \stackrel{k\to \infty}{\longrightarrow} \int_0^T e^{\varepsilon_0(\tau-T)} \langle f_{t-nT}(\tau, v(\tau)),v_t(\tau)\rangle d\tau,
\]
and
\[
\int_0^T e^{\varepsilon_0(\tau-T)}\langle f_s(\tau, v(\tau)),v^{(k)}_t(\tau)\rangle d\tau \stackrel{k\to \infty}{\longrightarrow} \int_0^T e^{\varepsilon_0(\tau-T)}\langle f_s(\tau, v(\tau)),v_t(\tau)\rangle d\tau.
\]
\end{lemma}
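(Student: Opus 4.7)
The plan is to mirror the proof of Lemma \ref{proppart1e} almost verbatim, since the two convergences stated here are the natural $k\to \infty$ analogues of the two $m\to \infty$ limits proved there. The only new ingredients we exploit are the strong convergence $v^{(k)}\to v$ in $C([0,T];H^\gamma(\Omega))$ from Lemma \ref{convhs} (which upgrades certain integrals over compositions of $f_{t-nT}$ to honest limits) and the growth estimate $\|f_{t-nT}(\tau,v(\tau))\|_{L^2}\leqslant \delta(t-nT)$ for some $\delta\in \mathcal{C}^\ast$, which comes from the pointwise bound $|f(s,v)|\leqslant 2c_0(s)(1+|v|^3)$ together with the uniform-in-$\tau$ bound $\|v(\tau)\|_{H^1_0}\leqslant \Xi_T(t-nT)$ on $[0,T]$ and the embedding $H^1_0(\Omega)\hookrightarrow L^6(\Omega)$.

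For the first limit I would define $\psi\colon [0,T]\to L^2(\Omega)$ by $\psi(\tau):=e^{\varepsilon_0(\tau-T)}v_t(\tau)$. Since $v_t\in L^\infty(0,T;L^2(\Omega))$ (indeed $\|v_t(\tau)\|_{L^2}\leqslant \Xi_T(t-nT)$ by the bound on $\|V(\tau,t-nT,V_0)\|_X$ inherited by the weak-$\ast$ limit) and $|e^{\varepsilon_0(\tau-T)}|\leqslant 1$ on $[0,T]$, we get $\psi\in L^1(0,T;L^2(\Omega))$. Applying the third bullet of Lemma \ref{convhs} with $\ell=0$ and this choice of test function yields
\[
\int_0^T e^{\varepsilon_0(\tau-T)}\langle f_{t-nT}(\tau,v^{(k)}(\tau))-f_{t-nT}(\tau,v(\tau)),v_t(\tau)\rangle d\tau \xrightarrow{k\to\infty} 0,
\]
which is exactly the first claim.

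For the second limit I would instead put $\xi\colon [0,T]\to L^2(\Omega)$ by $\xi(\tau):=e^{\varepsilon_0(\tau-T)}f_{t-nT}(\tau,v(\tau))$ (correcting what seems to be a typo $f_s$ in the statement). The bound above gives $\|\xi(\tau)\|_{L^2}\leqslant \delta(t-nT)$, hence $\xi\in L^1(0,T;L^2(\Omega))$. Since by Lemma \ref{convhs} we have $v_t^{(k)}\stackrel{\ast}{\rightharpoonup} v_t$ in $L^\infty(0,T;L^2(\Omega))$, testing against $\xi\in L^1(0,T;L^2(\Omega))$ produces
\[
\int_0^T \langle \xi(\tau),v_t^{(k)}(\tau)-v_t(\tau)\rangle d\tau \xrightarrow{k\to\infty} 0,
\]
which is the second claim.

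The only mildly delicate point I anticipate is justifying that $f_{t-nT}(\tau,v(\tau))$ is uniformly bounded in $L^2$ for $\tau\in[0,T]$; this is where one uses the cubic growth of $f$ together with the embedding $H^1_0\hookrightarrow L^6$ and the fact that the limit $v$ inherits the $L^\infty(0,T;H^1_0)$-bound $\Xi_T(t-nT)$ from the uniform bound on the approximating sequence $v^{(k)}$ via weak-$\ast$ lower semicontinuity. Everything else is a direct invocation of the compactness and weak-$\ast$ convergence properties already collected in Lemma \ref{convhs}.
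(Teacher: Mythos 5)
Your proposal is correct and is essentially the paper's intended argument: the paper proves Lemma \ref{proppart2e} by declaring it ``analogous'' to Lemma \ref{proppart1e}, and your proof is precisely that mirroring — testing the convergence statement for $f_{t-nT}(\cdot,v^{(k)})$ from Lemma \ref{convhs} against $\psi(\tau)=e^{\varepsilon_0(\tau-T)}v_t(\tau)\in L^1(0,T;L^2(\Omega))$ for the first limit, and testing the weak-$\ast$ convergence $v_t^{(k)}\stackrel{\ast}{\rightharpoonup}v_t$ against $\xi(\tau)=e^{\varepsilon_0(\tau-T)}f_{t-nT}(\tau,v(\tau))\in L^1(0,T;L^2(\Omega))$ for the second, with the $L^2$-bound on $f_{t-nT}(\tau,v(\tau))$ justified exactly as in the paper via the cubic growth of $f$, the embedding $H^1_0(\Omega)\hookrightarrow L^6(\Omega)$, and the uniform bound inherited by the limit $v$. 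You also correctly flag that the $f_s$ appearing in the statement's second display should read $f_{t-nT}$.
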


Using these two lemmas, we obtain:

\begin{lemma}\label{propt2023xexp}
\begin{align*}
&\lim_{k\to \infty}\lim_{m\to \infty}\int_0^T e^{\varepsilon_0(\tau-T)}\langle f_{t-nT}(\tau, v^{(k)}(\tau)),v_t^{(m)}(\tau)\rangle d\tau\\
&\quad=\lim_{k\to \infty}\lim_{m\to \infty}\int_0^T e^{\varepsilon_0(\tau-T)}\langle f_{t-nT}(\tau, v^{(m)}(\tau)),v_t^{(k)}(\tau)\rangle d\tau\\
&\quad=\int_{\Omega}F_{t-nT}(T,v(T))dx-e^{-\varepsilon_0 T}\int_{\Omega}F_{t-nT}(0,v(0))dx\\
&\qquad-\varepsilon_0\int_{\Omega}\int_0^T e^{\varepsilon_0(\tau-T)}F_{t-nT}(\tau,v(\tau))d\tau dx-\int_{\Omega}\int_0^T e^{\varepsilon_0(\tau-T)}\frac{\partial}{\partial \tau}F_{t-nT}(\tau, v(\tau)) d\tau dx.
\end{align*}
\end{lemma}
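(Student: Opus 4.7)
The plan is to prove both iterated limits equal the same expression $\int_0^T e^{\varepsilon_0(\tau-T)}\langle f_{t-nT}(\tau,v(\tau)),v_t(\tau)\rangle d\tau$, and then identify this single integral with the right-hand side via the chain rule plus integration by parts. The two iterated limits are handled symmetrically: in the first, I would first send $m\to \infty$ using the first half of Lemma \ref{proppart1e} to get $\int_0^T e^{\varepsilon_0(\tau-T)}\langle f_{t-nT}(\tau,v^{(k)}(\tau)), v_t(\tau)\rangle\,d\tau$, and then send $k\to \infty$ using the first half of Lemma \ref{proppart2e}. For the second iterated limit I would first apply the second half of Lemma \ref{proppart1e} (fixing $k$ and letting $m\to \infty$) and then the second half of Lemma \ref{proppart2e} (letting $k\to \infty$). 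Both procedures give the same common limit $I:=\int_0^T e^{\varepsilon_0(\tau-T)}\langle f_{t-nT}(\tau,v(\tau)),v_t(\tau)\rangle d\tau$.

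The next step is to rewrite $I$ by exploiting the primitive structure of $F$. Since $\partial_v F_{t-nT}(\tau,v)=f_{t-nT}(\tau,v)$, the chain rule (valid for the regularity of our weak solution $v\in C([0,T],H^1_0(\Omega))\cap C^1([0,T],L^2(\Omega))$, together with the growth conditions on $f$ from \ref{cond4} and on $\partial_t F$ from \ref{cond5}) yields, in the sense of distributions on $(0,T)$,
\[
\frac{d}{d\tau}\int_\Omega F_{t-nT}(\tau,v(\tau))\,dx = \langle f_{t-nT}(\tau,v(\tau)), v_t(\tau)\rangle + \int_\Omega \frac{\partial F_{t-nT}}{\partial \tau}(\tau,v(\tau))\,dx.
\]
Hence
\[
\langle f_{t-nT}(\tau,v(\tau)), v_t(\tau)\rangle = \frac{d}{d\tau}\int_\Omega F_{t-nT}(\tau,v(\tau))\,dx - \int_\Omega \frac{\partial F_{t-nT}}{\partial \tau}(\tau,v(\tau))\,dx.
\]

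Multiplying by $e^{\varepsilon_0(\tau-T)}$, integrating from $0$ to $T$, and integrating by parts on the first term gives
\[
\begin{aligned}
I&=\Bigl[e^{\varepsilon_0(\tau-T)}\int_\Omega F_{t-nT}(\tau,v(\tau))\,dx\Bigr]_0^T - \varepsilon_0 \int_0^T e^{\varepsilon_0(\tau-T)}\int_\Omega F_{t-nT}(\tau,v(\tau))\,dx\,d\tau\\
&\quad - \int_0^T e^{\varepsilon_0(\tau-T)}\int_\Omega \frac{\partial F_{t-nT}}{\partial \tau}(\tau,v(\tau))\,dx\,d\tau,
\end{aligned}
\]
and evaluating the boundary term, followed by Fubini (justified by the integrability of $F_{t-nT}$ and $\partial_\tau F_{t-nT}$ against the bounded exponential), produces precisely the stated right-hand side.

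The main obstacle is the rigorous justification of the chain rule for $\int_\Omega F_{t-nT}(\tau,v(\tau))\,dx$ when $v$ is only a weak solution, since $v_{tt}$ lives in $H^{-1}(\Omega)$. I would handle it by a standard approximation argument: regularize $v$ in time (for instance by Steklov averages or mollification), apply the classical chain rule to the smooth approximations, and then pass to the limit using the growth estimate \eqref{funcFimp.b} on $F$, the bound $|\partial_t F|\leqslant c_0$ from \ref{cond5}, and the continuity of $v$ into $H^1_0(\Omega)\hookrightarrow L^6(\Omega)$ together with $v_t\in L^2(\Omega)$. Once this identity is secured, the remaining manipulations (Fubini, integration by parts for the scalar exponential) are elementary.
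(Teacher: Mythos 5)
Your proposal is correct and follows essentially the same route as the paper: both iterated limits are identified with $\int_0^T e^{\varepsilon_0(\tau-T)}\langle f_{t-nT}(\tau,v(\tau)),v_t(\tau)\rangle\,d\tau$ via Lemmas \ref{proppart1e} and \ref{proppart2e}, and this integral is then rewritten using the chain rule for $F_{t-nT}$ (as a primitive of $f_{t-nT}$), the exponential weight, and Fubini's theorem. The only difference is cosmetic --- the paper applies Fubini first and runs the chain rule pointwise in $x$ before integrating over $\Omega$, whereas you differentiate the spatially integrated functional and invoke Fubini at the end (and you are, if anything, more explicit than the paper in justifying the chain rule for weak solutions by approximation).
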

\begin{proof}
We have
\begin{align*}
    &\lim_{k\to \infty}\lim_{m\to \infty}\int_0^T e^{\varepsilon_0(\tau-T)}\langle f_{t-nT}(\tau, v^{(k)}(\tau)),v_t^{(m)}(\tau)\rangle d\tau\\
&\quad=\lim_{k\to \infty}\lim_{m\to \infty}\int_0^T e^{\varepsilon_0(\tau-T)}\langle f_{t-nT}(\tau, v^{(m)}(\tau)),v_t^{(k)}(\tau)\rangle d\tau =\int_0^T e^{\varepsilon_0(\tau-T)}\langle f_{t-nT}(\tau,v(\tau)), v_t(\tau)\rangle d\tau.
\end{align*}

Since for $\tau\in [0,T]$ we have
\[
\int_{\Omega}|e^{\varepsilon_0(\tau-T)}f_{t-nT}(\tau, v(\tau))v_t(\tau)| dx \leqslant \|f_{t-nT}(\tau, v(\tau))\|_{L^2}\|v_t(\tau)\|_{L^2}\leqslant \delta(t-nT)<\infty,
\]
for some $\delta\in \mathcal{C}^\ast$, we can apply Fubini's Theorem to obtain 
\begin{align*}
\int_0^T \int_{\Omega}&e^{\varepsilon_0(\tau-T)}f_{t-nT}(\tau, v(\tau))v_t(\tau)dx d\tau = \int_{\Omega}\int_0^T e^{\varepsilon_0(\tau-T)}f_{t-nT}(\tau, v(\tau))v_t(\tau) d\tau dx.
\end{align*}
Note that 
\begin{align*}
    \frac{d}{d\tau}\left[\frac{1}{\varepsilon_0}e^{\varepsilon_0(\tau-T)}F_{t-nT}(\tau,v(\tau))\right]&=e^{\varepsilon_0(\tau-T)}F_{t-nT}(\tau,v(\tau))+\frac{1}{\varepsilon_0}e^{\varepsilon_0(\tau-T)}\frac{\partial}{\partial \tau}F_{t-nT}(\tau,v(\tau))\\
    &\quad+\frac{1}{\varepsilon_0}e^{\varepsilon_0(\tau-T)}f_{t-nT}(\tau,v(\tau))v_t(\tau),
\end{align*}
which, integrating from $0$ to $T$, implies
\begin{align*}
    \int_0^T & e^{\varepsilon_0(\tau-T)}f_{t-nT}(\tau,v(\tau))v_t(\tau) d\tau= \int_0^T  \frac{d}{d\tau}[e^{\varepsilon_0(\tau-T)}F_{t-nT}(\tau,v(\tau))] d\tau\\
    &\quad-\varepsilon_0\int_0^T e^{\varepsilon_0(\tau-T)}F_{t-nT}(\tau,v(\tau))d\tau-\int_0^Te^{\varepsilon_0(\tau-T)}\frac{\partial}{\partial \tau}F_{t-nT}(\tau,v(\tau))d\tau.
\end{align*}
Now the conclusion follows immediately.
\end{proof}

Thus, we obtain:

\begin{corollary}
We have
\[
\lim_{k\to \infty}\lim_{m\to \infty}\int_0^T e^{\varepsilon_0(\tau-T)}\langle f_{t-nT}(\tau,v^{(k)}(\tau))-f_s(\tau,v^{(m)}(\tau)),v_t^{(n)}(\tau)-v_t^{(m)}(\tau)\rangle d\tau =0.
\]
\end{corollary}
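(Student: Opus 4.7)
The idea is to expand the integrand of the corollary as four terms, two of which are symmetric ``diagonal'' terms and two of which are ``cross'' terms, and then identify their limits. Writing $f_k(\tau):=f_{t-nT}(\tau,v^{(k)}(\tau))$ for brevity, we have pointwise in $\tau$
\[
\langle f_k-f_m,\,v_t^{(k)}-v_t^{(m)}\rangle=\langle f_k,v_t^{(k)}\rangle+\langle f_m,v_t^{(m)}\rangle-\langle f_k,v_t^{(m)}\rangle-\langle f_m,v_t^{(k)}\rangle.
\]
Multiplying by $e^{\varepsilon_0(\tau-T)}$ and integrating over $[0,T]$, Lemma~\ref{propt2023xexp} immediately shows that the iterated limits $\lim_{k\to\infty}\lim_{m\to\infty}$ of the last two integrals both equal the common value
\[
I:=\int_{\Omega}F_{t-nT}(T,v(T))\,dx-e^{-\varepsilon_0 T}\int_{\Omega}F_{t-nT}(0,v(0))\,dx-\varepsilon_0\!\int_{\Omega}\!\int_0^T e^{\varepsilon_0(\tau-T)}F_{t-nT}(\tau,v)\,d\tau dx-\int_{\Omega}\!\int_0^T e^{\varepsilon_0(\tau-T)}\tfrac{\partial}{\partial\tau}F_{t-nT}(\tau,v)\,d\tau dx.
\]

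The central step is to show that each diagonal integral $\int_0^T e^{\varepsilon_0(\tau-T)}\langle f_k,v_t^{(k)}\rangle\,d\tau$ converges to the same limit $I$ as $k\to\infty$. For this I would use an integration by parts in time, exactly as in the proof of Lemma~\ref{propt2023xexp}: by the chain rule,
\[
\tfrac{d}{d\tau}\Bigl[e^{\varepsilon_0(\tau-T)}F_{t-nT}(\tau,v^{(k)}(\tau))\Bigr]=\varepsilon_0\,e^{\varepsilon_0(\tau-T)}F_{t-nT}(\tau,v^{(k)})+e^{\varepsilon_0(\tau-T)}\tfrac{\partial F_{t-nT}}{\partial\tau}(\tau,v^{(k)})+e^{\varepsilon_0(\tau-T)}f_k(\tau)\,v_t^{(k)}(\tau),
\]
so integrating from $0$ to $T$ and applying Fubini yields
\[
\int_0^T e^{\varepsilon_0(\tau-T)}\langle f_k,v_t^{(k)}\rangle\,d\tau=\int_{\Omega}F_{t-nT}(T,v^{(k)}(T))dx-e^{-\varepsilon_0 T}\!\int_{\Omega}F_{t-nT}(0,v^{(k)}(0))dx-\varepsilon_0\!\int_{\Omega}\!\int_0^T\! e^{\varepsilon_0(\tau-T)}F_{t-nT}(\tau,v^{(k)})d\tau dx-\int_{\Omega}\!\int_0^T\! e^{\varepsilon_0(\tau-T)}\tfrac{\partial F_{t-nT}}{\partial\tau}(\tau,v^{(k)})d\tau dx.
\]

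Now I would pass to the limit $k\to\infty$ in each of these four terms using the strong convergence $v^{(k)}\to v$ in $C([0,T];H^{\gamma}(\Omega))$ from Lemma~\ref{convhs}. Choosing $\gamma\in(\tfrac34,1)$ gives the compact embedding $H^\gamma(\Omega)\hookrightarrow L^4(\Omega)$ in the three-dimensional setting, so $v^{(k)}\to v$ uniformly in $\tau\in[0,T]$ in $L^4(\Omega)$. Combining this with the quartic growth bound \eqref{funcFimp.b} and the Lipschitz estimate $|F(t,v)-F(t,w)|\leqslant 8c_0(t)(1+|v|^3+|w|^3)|v-w|$ (recalled in the excerpt) together with the uniform bound $\|v^{(k)}(\tau)\|_{H^1_0}\leqslant\Xi_T(t-nT)$ available on the absorbing family $C_{t-nT}$, an application of H\"older's inequality and dominated convergence shows that each of the four terms converges to its analogue with $v^{(k)}$ replaced by $v$. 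The limit is therefore exactly $I$.

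Consequently
\[
\lim_{k\to\infty}\lim_{m\to\infty}\int_0^T e^{\varepsilon_0(\tau-T)}\langle f_k-f_m,v_t^{(k)}-v_t^{(m)}\rangle\,d\tau=I+I-I-I=0.
\]
The main obstacle is the passage to the limit in the nonlinear term: the solutions $v^{(k)}$ only converge weakly-$\ast$ in $L^\infty(0,T;H^1_0)$, and it is precisely the compactness upgrade in Lemma~\ref{convhs} (strong convergence in $C([0,T];H^\gamma(\Omega))$) together with the cubic control on $f$ and the Sobolev embedding in dimension three that makes this limit legitimate. Once this is in place, the cancellation of the four terms is automatic and the corollary follows.
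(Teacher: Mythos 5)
Your proof is correct and follows essentially the route the paper intends: the corollary is stated there as an immediate consequence of Lemmas \ref{proppart1e}--\ref{propt2023xexp}, which is precisely the four-term expansion (two cross terms, two diagonal terms) that you carry out. Your handling of the diagonal terms --- the exponentially weighted energy identity for each $v^{(k)}$ followed by passage to the limit via the strong convergence $v^{(k)}\to v$ in $C([0,T];H^\gamma(\Omega))$ from Lemma \ref{convhs} and the growth bounds on $F$ --- is the same integration-by-parts computation that appears inside the paper's proof of Lemma \ref{propt2023xexp}, so your write-up simply makes explicit what the paper leaves implicit.
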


We can finally prove Theorem \ref{App:Att}.

\begin{proof}[Proof of Theorem \ref{App:Att}]
With these previous results we conclude the contractiveness of $\psi_n$ in $C_{t-nT}$ for each $n\in \mathbb{N}$. Applying similar changes to the proof of the results presented in \cite{Pecorari}, we can prove that $\rho_1,\rho_2$ are precompact on $C_{t-nT}$.

Therefore, Theorem \ref{corMain2} ensures the existence of a generalized exponential $\mathfrak{D}_{\mathcal{C}^\ast}$-pullback attractor $\hat{M}$ for the evolution process $S$ associated with \eqref{ourproblem}, with $\hat{M}\in \mathfrak{D}_{\mathcal{C}^\ast}$. Furthermore, from Proposition \ref{theo:GenimpliesPA}, it follows that $S$ has a $\mathfrak{D}_{\mathcal{C}^\ast}$-pullback attractor with $\hat{A}\subset \hat{M}$. 
\end{proof}

\medskip \textbf{Data Availability.} Data sharing is not applicable to this article as no new data were created or analyzed in
this study.

\section*{Declarations}

\textbf{Competing Interests.} The authors have not disclosed any competing interests.

\begin{appendices}

\section{Auxiliary results}\label{appendix}

In this brief appendix, we present two results regarding properties of $\mathcal{C}^\ast$.

\begin{lemma}\label{integralemC*}
    If $\eta>0$ and $\delta\in \mathcal{C}^\ast$, then the map
    $
    \mathbb{R}\ni t \mapsto \mu(t):=\int_0^\infty \delta(t-u)e^{-\eta u}du
    $
    is also in $\mathcal{C}^\ast$.
\end{lemma}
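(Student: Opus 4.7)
The plan is to verify the three defining properties of $\mathcal{C}^\ast$ for $\mu$: continuity, strict positivity, and the decay condition $\lim_{\tau\to\infty}\sup_{s\leq t}\mu(s-\tau)e^{-\alpha\tau}=0$ for every $\alpha>0$ and $t\in\mathbb{R}$. Positivity is immediate since the integrand is strictly positive.

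For well-definedness and continuity of $\mu$, the key is that $u\mapsto \delta(t-u)e^{-\eta u}$ admits an integrable dominating function that is uniform for $t$ in a compact set. Given $t_0\in\mathbb{R}$, apply the $\mathcal{C}^\ast$ property of $\delta$ with upper bound $t_0+1$ and parameter $\alpha'=\eta/2$ to obtain $u_0$ and $C>0$ such that $\delta(s-u)\leq C e^{\eta u/2}$ for all $s\leq t_0+1$ and $u\geq u_0$; on $[0,u_0]$ the function $(s,u)\mapsto\delta(s-u)$ is bounded by continuity. Hence $\delta(t-u)e^{-\eta u}$ is dominated by a fixed integrable function for $t$ near $t_0$, so dominated convergence together with the continuity of $\delta$ yield both finiteness of $\mu(t_0)$ and continuity of $\mu$ at $t_0$.

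The main content is the decay estimate. Fix $\alpha>0$ and $t\in\mathbb{R}$ and set $M(\sigma):=\sup_{s\leq t}\delta(s-\sigma)$. Pulling the supremum inside the (nonnegative) integral and then substituting $v=\tau+u$ gives
\[
\sup_{s\leq t}\mu(s-\tau)\leq \int_0^\infty M(\tau+u)e^{-\eta u}\,du \;=\; e^{\eta\tau}\int_\tau^\infty M(v)e^{-\eta v}\,dv.
\]
Choose $\alpha'$ with $0<\alpha'<\min(\alpha,\eta)$. The $\mathcal{C}^\ast$ property of $\delta$ gives $v_0$ such that $M(v)\leq e^{\alpha' v}$ for $v\geq v_0$, so for $\tau\geq v_0$,
\[
\int_\tau^\infty M(v)e^{-\eta v}\,dv \leq \int_\tau^\infty e^{(\alpha'-\eta)v}\,dv = \tfrac{1}{\eta-\alpha'}\,e^{(\alpha'-\eta)\tau}.
\]
Combining the two displays and multiplying by $e^{-\alpha\tau}$,
\[
\sup_{s\leq t}\mu(s-\tau)e^{-\alpha\tau}\leq \tfrac{1}{\eta-\alpha'}\,e^{(\alpha'-\alpha)\tau}\xrightarrow{\tau\to\infty}0,
\]
which is the required property.

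The only real subtlety is the exponent bookkeeping: $\alpha'$ must be chosen strictly less than $\eta$ so the tail integral converges, and strictly less than $\alpha$ so the final exponential decays; choosing $\alpha'<\min(\alpha,\eta)$ handles both at once. With that choice fixed, all estimates follow directly from applying the defining condition of $\mathcal{C}^\ast$ to $\delta$.
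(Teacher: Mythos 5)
Your proof is correct, but it routes the key decay estimate differently from the paper. The paper's proof is a one-step argument: it applies the $\mathcal{C}^\ast$ condition for $\delta$ with the \emph{same} exponent $\alpha$, observing that for $s\leqslant t$ and $u\geqslant 0$ one has $s-u\leqslant t$, so the uniform bound $\delta((s-u)-\tau)e^{-\alpha\tau}<\eta\epsilon$ holds inside the integral uniformly in $u$; then $\mu(s-\tau)e^{-\alpha\tau}\leqslant \eta\epsilon\int_0^\infty e^{-\eta u}\,du=\epsilon$, with no substitution and no auxiliary exponent. You instead pull the supremum inside, change variables to produce the tail integral $e^{\eta\tau}\int_\tau^\infty M(v)e^{-\eta v}\,dv$, and then need the intermediate exponent $\alpha'<\min(\alpha,\eta)$ to make the tail converge and the final factor decay. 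Both arguments hinge on the same structural fact (the $\mathcal{C}^\ast$ bound is uniform over the half-line $s\leqslant t$, which absorbs the shift by $u$), but the paper's exploitation of $s-u\leqslant t$ eliminates all the exponent bookkeeping, while your version is more mechanical and arguably more robust if one only had a weaker, non-uniform hypothesis. A genuine plus of your write-up is that you verify finiteness and continuity of $\mu$ (via a domination argument), which the definition of $\mathcal{C}^\ast$ requires and which the paper's proof silently takes for granted; a minor point you leave implicit is the measurability/finiteness of $M(\sigma)=\sup_{s\leqslant t}\delta(s-\sigma)$, but this is harmless since $M$ is lower semicontinuous and your estimate only uses $M$ on $[v_0,\infty)$, where the bound $M(v)\leqslant e^{\alpha' v}$ applies.
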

\begin{proof}
    Fix $\epsilon>0$. Since $\delta\in \mathcal{C}^\ast$, given $\alpha>0$ and $t\in \mathbb{R}$ there exists $\tau_0>0$ such that for all $\tau\geqslant \tau_0$ and $\xi\leqslant t$ we have
    $
    \delta(\xi-\tau)e^{-\alpha \tau} < \eta \epsilon.
    $
    Thus for $s\leqslant t$
    \[
    \mu(s-\tau)e^{-\alpha \tau}=e^{-\alpha \tau}\int_0^\infty \delta(s-\tau-u)e^{-\eta u}du = \int_0^\infty \underbrace{\delta(s-u-\tau)e^{-\alpha \tau}}_{<\eta\epsilon}e^{-\eta u}du < \eta \epsilon \int_0^\infty e^{-\eta u}du = \epsilon,
    \]
    which implies that
    $
    \limsup_{\tau\to \infty}\sup_{s\leqslant t}\mu(s-\tau)e^{-\alpha \tau} \leqslant \epsilon.
    $
    Since $\epsilon>0$ is arbitrary, we conclude that $\mu\in \mathcal{C}^\ast$.
\end{proof}

\begin{lemma}\label{supinC*}
If $\delta\in \mathcal{C}^\ast$ and $T>0$ then the map $\mu\colon \mathbb{R}\to (0,\infty)$, defined by
$
\mu(s) = \sup_{\ell \in [0,T]} \delta(\ell+s) \hbox{ for } s\in \mathbb{R}, 
$
is in $\mathcal{C}^\ast$.
\end{lemma}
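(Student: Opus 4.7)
The plan is to verify the two defining requirements of $\mathcal{C}^\ast$ for the function $\mu(s)=\sup_{\ell\in[0,T]}\delta(\ell+s)$: first that $\mu$ takes values in $(0,\infty)$ and is continuous, and second that for every $\alpha>0$ and $t\in\mathbb{R}$, $\lim_{\tau\to\infty}\sup_{s\leqslant t}\mu(s-\tau)e^{-\alpha\tau}=0$. The first requirement is elementary: $\delta>0$ gives $\mu>0$, and since $\delta$ is continuous on the compact set $[s,s+T]$ the supremum is finite.

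For continuity, I would argue that if $s_n\to s$, then $\delta$ is uniformly continuous on a compact neighborhood of $[s,s+T]$, so the functions $\ell\mapsto\delta(\ell+s_n)$ converge to $\ell\mapsto\delta(\ell+s)$ uniformly on $[0,T]$, and hence their suprema converge. This gives continuity of $\mu$ on $\mathbb{R}$.

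The main step is the decay estimate, and it is essentially a change of the reference time. Fix $\alpha>0$ and $t\in\mathbb{R}$. For any $s\leqslant t$ and $\tau\geqslant 0$, I would write
\[
\mu(s-\tau)e^{-\alpha\tau}=\sup_{\ell\in[0,T]}\delta\bigl((s+\ell)-\tau\bigr)e^{-\alpha\tau}.
\]
Since $s\leqslant t$ and $\ell\in[0,T]$, I set $s':=s+\ell\leqslant t+T$, and therefore
\[
\sup_{s\leqslant t}\mu(s-\tau)e^{-\alpha\tau}\leqslant \sup_{s'\leqslant t+T}\delta(s'-\tau)e^{-\alpha\tau}.
\]
Applying the defining property of $\mathcal{C}^\ast$ for $\delta$, but at the point $t+T$ instead of $t$, the right-hand side tends to $0$ as $\tau\to\infty$, which yields the desired conclusion.

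No serious obstacle is anticipated: the whole argument is a straightforward bookkeeping of how a bounded shift $\ell\in[0,T]$ is absorbed into an enlargement of the upper bound from $t$ to $t+T$, which is permissible because the $\mathcal{C}^\ast$ condition is required to hold for every $t\in\mathbb{R}$. The only point that deserves a brief remark is the continuity of $\mu$, which I would dispatch using uniform continuity of $\delta$ on a compact neighborhood rather than any more subtle envelope argument.
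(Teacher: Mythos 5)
Your proof is correct and follows essentially the same argument as the paper: the key step in both is to apply the defining property of $\delta\in\mathcal{C}^\ast$ at the shifted reference time $t+T$, so that the bounded shift $\ell\in[0,T]$ is absorbed via $\ell+s\leqslant t+T$ whenever $s\leqslant t$. Your additional verification of the continuity of $\mu$ (via uniform continuity of $\delta$ on a compact neighborhood) is a sound point that the paper's proof leaves implicit.
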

\begin{proof}
    Since $\delta\in \mathcal{C}^\ast$, given $t\in \mathbb{R}$, $\alpha>0$ and $\epsilon>0$, there exists $\tau_0\geqslant 0$ such that $\delta(s-\tau)e^{-\alpha \tau}<\epsilon$ for all $\tau\geqslant \tau_0$ and $s\leqslant t+T$. Hence if $\ell\in [0,T]$ and $s\leqslant t$ we have $\ell+s\leqslant t+T$ and
    $
    \delta(\ell+s-\tau)e^{-\alpha \tau} < \epsilon,
    $
    which implies that 
    $
    \mu(s-\tau)e^{-\alpha \tau}\leqslant \epsilon \hbox{ for all } \tau \geqslant \tau_0 \hbox{ and } s\leqslant t.
    $
    Thus $\mu\in \mathcal{C}^\ast$ as claimed.
\end{proof}

\end{appendices}

\bibliographystyle{abbrv}

\end{document}